\RequirePackage{rotating} 
\documentclass[opre,nonblindrev]{informs3} 

\OneAndAHalfSpacedXI 

\usepackage{endnotes}
\let\footnote=\endnote

\usepackage{natbib}
 \bibpunct[, ]{(}{)}{,}{a}{}{,}%
 \def\bibfont{\footnotesize}%
 \def\BIBand{and}%

\usepackage[utf8]{inputenc}
\usepackage[english]{babel}
\usepackage[margin=1in]{geometry}
\usepackage{bbding}
\usepackage[shortlabels]{enumitem}
\usepackage[super]{nth}
\usepackage{bm, bbm}
\usepackage{amsmath, amssymb}
\usepackage{mathtools}
\usepackage{subcaption, placeins}
\usepackage{graphicx}
\usepackage[hidelinks]{hyperref}
\usepackage{pgfplots}

\usepackage{algorithm}
\usepackage{algorithmic}
\usepackage{subcaption}
\usepackage{booktabs}
\usepackage{multirow} 
\usepackage{placeins}
\usepackage{booktabs}
\usepackage{accents}
\usepackage{graphics}

\DeclarePairedDelimiter\floor{\lfloor}{\rfloor}
\usepackage{tikz}
\usepackage{placeins}
\usepackage{float}
\usepackage{graphicx}
\usepackage{accents}
\usepackage{rotating}
\usepackage{graphics}

\usepackage{color}
\newcommand{\blue}{\color{black}}

\definecolor{blue}{rgb}{0,0,1}

\setlist[itemize]{leftmargin=*, nosep}

\usepackage{etoolbox}
\newcommand{\zerodisplayskips}{%
  \setlength{\abovedisplayskip}{7pt}%
  \setlength{\belowdisplayskip}{5pt}%
  \setlength{\abovedisplayshortskip}{0pt}%
  \setlength{\belowdisplayshortskip}{0pt}}
\appto{\normalsize}{\zerodisplayskips}
\appto{\small}{\zerodisplayskips}
\appto{\footnotesize}{\zerodisplayskips}
\usepackage{xfrac}

\usepackage[sectionbib]{bibunits}
    \defaultbibliographystyle{informs2014} 
    \defaultbibliography{thebib}
    
\TheoremsNumberedThrough     
\ECRepeatTheorems

\EquationsNumberedThrough    

\begin{document}


 \RUNAUTHOR{Cory-Wright and Pauphilet}

\RUNTITLE{Compact Lifted Relaxations for Low-Rank Optimization}

\TITLE{Compact Lifted Relaxations for Low-Rank Optimization}

\ARTICLEAUTHORS{%
\AUTHOR{Ryan Cory-Wright}
\AFF{Department of Analytics, Marketing and Operations, Imperial Business School, London, UK\\
ORCID: \href{https://orcid.org/0000-0002-4485-0619}{$0000$-$0002$-$4485$-$0619$}\\ \EMAIL{r.cory-wright@imperial.ac.uk}} 
\AUTHOR{Jean Pauphilet}
\AFF{Management Science and Operations, London Business School, London, UK \\ORCID: \href{https://orcid.org/0000-0001-6352-0984}{$0000$-$0001$-$6352$-$0984$}\\\EMAIL{jpauphilet@london.edu}}
} 

\ABSTRACT{
We develop tractable convex relaxations for rank-constrained quadratic optimization problems over $n \times m$ matrices, a setting for which tractable relaxations are typically only available when the objective or constraints admit spectral structure. We derive lifted semidefinite relaxations that do not require such spectral terms. Although a direct lifting introduces a large semidefinite constraint in dimension $n^2 + nm + 1$, we prove that many blocks of the moment matrix are redundant and derive an equivalent compact relaxation that only involves two semidefinite constraints of dimension $nm + 1$ and $n+m$, respectively. 
{\color{black}We also derive a new class of valid inequalities for low-rank problems, which we call projection cuts, that exploit the fact that rank constraints are inherited by linear images of a low-rank matrix, to strengthen our low-rank relaxations substantially.} For matrix completion and reduced-rank regression problems, among others, we exploit additional structure to obtain even more compact formulations involving semidefinite matrices of dimension at most {\color{black}the sum of the two dimensions of the low-rank decision matrix (i.e., of size at most $n+m$)}. Overall, we obtain scalable semidefinite bounds for a broad class of low-rank quadratic problems. 
}%

\KEYWORDS{Low-rank optimization; semidefinite programming; matrix completion} 

\maketitle
\vspace{-10mm}

\begin{bibunit}

\section{Introduction}

This work develops tractable relaxations for rank-constrained optimization problems of the form
\begin{align}\label{prob:lowrankquadratic}
    \min_{\bm{X} \in \mathbb{R}^{n \times m}} \quad & \lambda \cdot \operatorname{rank}(\bm{X}) + \left\langle \bm{H}, \mathrm{vec}(\bm{X}^\top)\mathrm{vec}(\bm{X}^\top)^\top \right\rangle + \left\langle \bm{D}, \bm{X}\right\rangle\\
    \text{s.t.} \quad & \operatorname{rank}(\bm{X}) \leq k, \ \left\langle \bm{Q}_i, \mathrm{vec}(\bm{X}^\top)\mathrm{vec}(\bm{X}^\top)^\top\right\rangle+\langle \bm{E}_i, \bm{X}\rangle \leq b_i \ \forall i \in \mathcal{I} \nonumber
\end{align}
where $\bm{H}, \bm{Q}_i \in \mathcal{S}^{nm}$ are $nm \times nm$ symmetric matrices, $\bm{D}, \bm{E}_i$ are $n \times m$ matrices, $\lambda \in \mathbb{R}_+, k \in \mathbb{N}$ are parameters which control the complexity of $\bm{X} \in \mathbb{R}^{n \times m}$ by respectively penalizing and constraining its rank, $\mathcal{I}$ denotes the index set of constraints, and $\mathrm{vec}(\cdot)$ denotes the vectorization of the matrix $\bm{X}$. Note that we write $\mathrm{vec}(\bm{X}^\top)$ rather than $\mathrm{vec}(\bm{X})$ to simplify the notation in our {\color{black}subsequent} relaxations; both formulations are equivalent up to a fixed permutation of the coordinates. 

Problem \eqref{prob:lowrankquadratic} is a very general class of problems: it models unregularized and Frobenius-regularized matrix completion \citep{candes2009exact} and reduced rank regression \citep{negahban2011estimation} as special cases, as we discuss in detail in Section \ref{sec:examplesofrelaxations}. However, to our knowledge, there are no tractable convex relaxations for low-rank quadratic optimization problems like \eqref{prob:lowrankquadratic} except those that exploit a spectral term in the objective or constraints that may not persist in practice. Accordingly, this paper proposes computationally tractable lifted relaxations of Problem \eqref{prob:lowrankquadratic}.

The closest works to this paper are (i) \citet{kim2021convexification}, who generalize the work of \cite{hiriart2012convexifying} to propose an extended formulation for sets of the form $\{ \bm{X} \: : \: \operatorname{rank}(\bm{X}) \leq k,\ f(\bm{X}) \leq 0 \}$, where $f$ is 
a quasiconvex function in the singular values of $\bm{X}$, 
(ii) \citet{bertsimas2021new}, who{\color{black}, building upon the work of \cite{bertsimas2020mixed}, }leverage partial separability to obtain compact semidefinite relaxations for low-rank problems with a spectral term in the objective of the form $\mathrm{tr}(f(\bm{X}))$ for $f$ matrix convex, via a matrix perspective relaxation, and (iii) \cite{li2025partial}, who propose a column generation algorithm for solving the convex relaxation of a low-rank spectrally constrained problem. All three works yield powerful relaxations when their structural assumptions hold, but do not yield generic, scalable convex relaxations for general low-rank quadratic optimization in which the objective and constraints are
arbitrary quadratic forms in $\mathrm{vec}(\bm X^\top)$. We complement this literature by providing a lifted SDP relaxation for general quadratic objectives and constraints, and by showing how to systematically eliminate variables to obtain implementable relaxations for low-rank problems.


\textbf{Main contributions:} In this work, we extend the lifted relaxation idea of \cite{shor1987quadratic} to Problem~\eqref{prob:lowrankquadratic} by leveraging the 
{\color{black}representation of low-rank matrices with projection matrices as proposed in}
\citet{bertsimas2020mixed}, thus enriching the toolbox of semidefinite relaxations for low-rank optimization. Our main theoretical contributions {\color{black} are a new class of compact} semidefinite relaxations for generic low-rank optimization problems (Theorem~\ref{thm:shorlowrankequiv} in Section \ref{ssec:shorrelax}), {\color{black}and a new family of valid inequalities, which we term projection cuts (Theorem \ref{prop:proj_cuts} in Section \ref{ssec:strongershor}), that often close most of the gap between this convex relaxation and feasible solutions}. To the best of our knowledge, this is the first work that obtains non-trivial and computationally tractable lower bounds for {\color{black} generic} quadratic low-rank optimization problems like \eqref{prob:lowrankquadratic}, without depending on spectral terms. 

\textbf{Structure:} We propose lifted relaxations of low-rank quadratic optimization problems in Section~\ref{sec:lowrankshorrelax}. To illustrate our approach, we apply our lifted relaxation to {\color{black}low-rank matrix completion and reduced rank regression in Section \ref{sec:examplesofrelaxations}. In Section \ref{sec:numerics}, we numerically benchmark our convex relaxations on low-rank matrix completion problems.}

\subsection{Notation} \label{ssec:notation}
We let non-boldface characters such as $b$ denote scalars, lowercase boldface characters such as $\bm{x}$ denote vectors, uppercase boldface characters such as $\bm{X}$ denote matrices, and calligraphic uppercase characters such as $\mathcal{Z}$ denote sets. We let $[n]$ denote the set of running indices $\{1, ..., n\}$. The cone of $n \times n$ symmetric (resp. positive {\color{black}semi}definite) matrices is denoted by $\mathcal{S}^n$ (resp. $\mathcal{S}^n_+$). 


For a matrix $\bm{X} \in \mathbb{R}^{n \times m}$, we let 
$\bm{X}_i$ denote its $i$th column and $\bm{X}_{i,.}$ denote a vector containing its $i$th row. We let $\mathrm{vec}(\bm{X}): \mathbb{R}^{n \times m} \rightarrow \mathbb{R}^{nm}$ denote the vectorization operator which maps matrices to vectors by stacking columns. 
For a matrix $\bm{W}$, we may find it convenient to describe it as a block matrix composed of equally sized blocks and denote the $(i,i')$ block by $\bm{W}^{(i,i')}$. The dimension of each block will be clear from the context, given the size of the matrix $\bm{W}$ and the number of blocks. 
{\color{black}For any matrices $\bm{A},\bm{B}$, we use $\bm{A} \otimes \bm{B}$ to denote their Kronecker product \citep[see][Chapter 1.2]{gupta2018matrix}.} 

We let $\bm{X}^\dag$ be the pseudoinverse of $\bm{X}$, which {\color{black}is used} in the Schur complement lemma \citep[][Eqn. 2.41]{boyd1994linear}. We let $\mathcal{Y}_n^k:=\{\bm{Y} \in \mathcal{S}^n_+ : \bm{Y}^2 =\bm{Y}, \mathrm{tr}(\bm{Y}) \leq k\}$ denote the set of orthogonal projection matrices with rank at most $k$, whose convex hull is $\{\bm{P} \in \mathcal{S}^n_+: \bm{P} \preceq \bm{I}_n, \mathrm{tr}({\bm{P}}) \leq k\}$
\citep[][Theorem~3]{overton1992sum}. 

\section{Lifted Relaxations for Low-Rank Optimization Problems}\label{sec:lowrankshorrelax}
In this section, we demonstrate how to apply a lifted relaxation technique to low-rank quadratic optimization problems in a manner that yields tractable convex relaxations. Throughout the section, we study the following reformulation of Problem \eqref{prob:lowrankquadratic}:
\begin{align}\label{prob:lowrankrelaxation_orig}
    \min_{\bm{Y} \in \mathcal{Y}^k_n} \min_{\bm{X} \in \mathbb{R}^{n \times m}} \quad & \lambda \cdot \mathrm{tr}(\bm{Y})+\left\langle \mathrm{vec}(\bm{X}^\top)\mathrm{vec}(\bm{X}^\top)^\top, \bm{H}\right\rangle+\left\langle \bm{D}, \bm{X}\right\rangle\ \\
    \text{s.t.} \quad & \left\langle \bm{Q}_i, \mathrm{vec}(\bm{X}^\top)\mathrm{vec}(\bm{X}^\top)^\top\right\rangle+\langle \bm{E}_i, \bm{X}\rangle \leq b_i, \ \forall i \in {\color{black}\mathcal{I}}, \ \bm{X}=\bm{Y}\bm{X}.\nonumber
\end{align}

As demonstrated\footnote{The constraints $\bm{X}=\bm{YX}, \bm{Y}^2=\bm{Y}$ imply that $\mathrm{rank}(\bm{X})\leq \mathrm{tr}(\bm{Y})$, which can be made to hold with equality by letting $\bm{Y}=\bm{UU}^\top$ for $\bm{X}=\bm{U\Sigma V^\top}$ a singular value decomposition of $\bm{X}$.} in \citet{bertsimas2020mixed}, any rank-constrained optimization problem of the form \eqref{prob:lowrankquadratic} can be formulated as an optimization over $(\bm{X},\bm{Y})$ of the form \eqref{prob:lowrankrelaxation_orig}, where the additional decision matrix $\bm{Y}$ is a projection matrix which encodes the span of $\bm{X}$ and whose trace bounds $\operatorname{rank}(\bm{X})$. However, unlike Problem \eqref{prob:lowrankquadratic}, Problem \eqref{prob:lowrankrelaxation_orig} concentrates all {\color{black}rank-induced} nonconvexity within the constraint{\color{black}s} $\bm{Y}^2=\bm{Y}$ {\color{black}and $\bm{X}=\bm{YX}$}, making it amenable to lifted relaxation techniques.

The rest of the section is organized as follows: First, in Section \ref{ssec:shorrelax}, we design a lifted semidefinite relaxation {\color{black}for Problem \eqref{prob:lowrankrelaxation_orig} (Proposition \ref{prop:convexrelax}) and show that many of the additional semidefinite variables can be safely omitted (Theorem \ref{thm:shorlowrankequiv})}. 
{\color{black}Second, in Section \ref{ssec:strongershor}, we develop a family of additional valid inequalities which we call projection cuts. Third, in Section \ref{ssec:strongerresults_blockdiag}, we specialize our results to the case where $\bm{H}$ and $\bm{Q}_i$ are block diagonal matrices, and show that our semidefinite relaxations and projection cuts can be written in terms of optimization problems that do not involve any $n^2 \times n^2$ matrices in this case. Finally, in Section \ref{ssec:bmfailure}, we revisit the nonlinear formulation of \cite{burer2003nonlinear}, and discuss why a naive lifted relaxation designed around this nonlinear formulation cannot yield strong lower bounds, while a more sophisticated version of the relaxation is equivalent to the relaxation designed in Section \ref{ssec:shorrelax}. }

\subsection{A New Lifted Relaxation and Its Compact Formulation}\label{ssec:shorrelax}
We introduce matrices $\bm{W}_{x,x}$, $\bm{W}_{x,y}$, $\bm{W}_{y,y}$ to model the outer products $\mathrm{vec}(\bm{X}^\top)\mathrm{vec}(\bm{X}^\top)^\top$, $\mathrm{vec}(\bm{X}^\top)\mathrm{vec}(\bm{Y})^\top$, and $\mathrm{vec}(\bm{Y})\mathrm{vec}(\bm{Y})^\top$. By including these decision matrices, we have:
\begin{proposition}\label{prop:convexrelax} The convex semidefinite optimization problem 
\begin{equation}
\begin{aligned}\label{eqn:quadraticShorrelax_lowrank}
    \min_{\substack{ \bm{Y} \in \mathcal{S}^n_+ \, : \, \bm{Y} \preceq \bm{I},\, \mathrm{tr}(\bm{Y}) \leq k \\ \bm{W}_{y,y} \in \mathcal{S}^{n^2}_+}} \quad  \min_{\substack{\bm{X} \in  \mathbb{R}^{\blue n \times m}, \\ \bm{W}_{x,x} \in \mathcal{S}^{nm}_+, \bm{W}_{x,y} \in \mathbb{R}^{nm \times n^2}}} \quad & \lambda \cdot \mathrm{tr}(\bm{Y})+\left\langle \bm{W}_{x,x}, \bm{H} \right\rangle+\langle \bm{D}, \bm{X}\rangle \\
    \text{s.t.} \quad 
    & \begin{pmatrix} 1 & \mathrm{vec}(\bm{X}^\top)^\top & \mathrm{vec}(\bm{Y})^\top \\
    \mathrm{vec}(\bm{X}^\top) & \bm{W}_{x,x} & \bm{W}_{x,y}\\
    \mathrm{vec}(\bm{Y}) & \bm{W}_{x,y}^\top & \bm{W}_{y,y}\end{pmatrix} \succeq \bm{0},\\
    & \sum_{i=1}^n \bm{W}_{y,y}^{(i,i)} = \bm{Y},\ \sum_{i=1}^n \bm{W}_{x,y}^{(i,i)} = \bm{X}^\top, \\
    & \langle \bm{Q}_i, \bm{W}_{x,x}\rangle+\langle \bm{E}_i, \bm{X}\rangle\leq b_i \ \forall i \in \mathcal{I}, \\
\end{aligned}
\end{equation}
is a valid convex relaxation of Problem \eqref{prob:lowrankrelaxation_orig}.
\end{proposition}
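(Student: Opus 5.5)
The plan is to show that every feasible point $(\bm{X},\bm{Y})$ of Problem~\eqref{prob:lowrankrelaxation_orig} can be lifted to a feasible point of \eqref{eqn:quadraticShorrelax_lowrank} with the same objective value. The optimal value of \eqref{eqn:quadraticShorrelax_lowrank} is then a lower bound on that of \eqref{prob:lowrankrelaxation_orig}, and hence on Problem~\eqref{prob:lowrankquadratic}, by the equivalence of \citet{bertsimas2020mixed}. Convexity is immediate: the objective is linear and every constraint is either linear or a semidefinite constraint on an affine image of the decision variables.

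Fix $\bm{Y}\in\mathcal{Y}^k_n$ and $\bm{X}$ feasible. Set $\bm{x}=\mathrm{vec}(\bm{X}^\top)$ and $\bm{y}=\mathrm{vec}(\bm{Y})$, and define
\[
\bm{W}_{x,x}=\bm{x}\bm{x}^\top,\qquad \bm{W}_{x,y}=\bm{x}\bm{y}^\top,\qquad \bm{W}_{y,y}=\bm{y}\bm{y}^\top.
\]
The big moment matrix is then $(1,\bm{x}^\top,\bm{y}^\top)^\top(1,\bm{x}^\top,\bm{y}^\top)\succeq\bm{0}$, so $\bm{W}_{x,x}\succeq \bm{0}$ and $\bm{W}_{y,y}\succeq\bm{0}$. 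Next, $\bm{Y}\in\mathcal{Y}^k_n$ gives $\bm{Y}\succeq\bm{0}$, $\bm{Y}\preceq\bm{I}$ (its eigenvalues lie in $\{0,1\}$) and $\mathrm{tr}(\bm{Y})\le k$. Finally, the objective and the quadratic constraints coincide with the originals because $\langle \bm{H},\bm{x}\bm{x}^\top\rangle$ and $\langle \bm{Q}_i,\bm{x}\bm{x}^\top\rangle$ are exactly the quadratic terms of \eqref{prob:lowrankrelaxation_orig}.

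The only real work is checking the two block-trace constraints, and I expect the main obstacle to be getting the block indexing of the Kronecker-structured vectors right. The vector $\bm{y}=\mathrm{vec}(\bm{Y})$ stacks the columns $\bm{Y}_1,\dots,\bm{Y}_n$, each of length $n$. Splitting $\bm{W}_{y,y}$ into $n\times n$ blocks gives $\bm{W}_{y,y}^{(i,i')}=\bm{Y}_i\bm{Y}_{i'}^\top$, so
\[
\sum_{i=1}^n\bm{W}_{y,y}^{(i,i)}=\sum_{i=1}^n\bm{Y}_i\bm{Y}_i^\top=\bm{Y}\bm{Y}^\top=\bm{Y}^2=\bm{Y},
\]
using symmetry and idempotency of $\bm{Y}$.

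For $\bm{W}_{x,y}$, the vector $\bm{x}=\mathrm{vec}(\bm{X}^\top)$ stacks the rows $\bm{X}_{1,.},\dots,\bm{X}_{n,.}$, each of length $m$. Partitioning $\bm{W}_{x,y}$ into $n\times n$ blocks of size $m\times n$ gives $\bm{W}_{x,y}^{(i,i')}=\bm{X}_{i,.}\bm{Y}_{i'}^\top$, so
\[
\sum_{i=1}^n\bm{W}_{x,y}^{(i,i)}=\sum_{i=1}^n\bm{X}_{i,.}\bm{Y}_i^\top=\bm{X}^\top\bm{Y}.
\]
Transposing $\bm{X}=\bm{Y}\bm{X}$ and using $\bm{Y}=\bm{Y}^\top$ yields $\bm{X}^\top\bm{Y}=\bm{X}^\top$, so this constraint holds too. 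With both constraints verified, the lifted point is feasible with the same objective value, which completes the argument.
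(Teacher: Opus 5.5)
Your proposal is correct and follows essentially the same route as the paper: lift a feasible $(\bm{X},\bm{Y})$ via the rank-one moment matrices, then verify the two block-trace constraints using $\bm{Y}\bm{Y}^\top=\bm{Y}$ and $\bm{X}^\top\bm{Y}^\top=\bm{X}^\top$. Your explicit block-indexing check, and the verification of the constraints on $\bm{Y}$, fill in details the paper leaves implicit. The block identity $\sum_i \bm{X}_{i,.}\bm{Y}_i^\top=\bm{X}^\top\bm{Y}$ quietly uses the symmetry of $\bm{Y}$, which holds here.
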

\begin{remark}
    If an optimal solution to \eqref{eqn:quadraticShorrelax_lowrank} is such that $\bm{W}_{x,x}=\mathrm{vec}(\bm{X}^\top)\mathrm{vec}(\bm{X}^\top)^\top$ then the optimal values of \eqref{eqn:quadraticShorrelax_lowrank} and \eqref{prob:lowrankrelaxation_orig} coincide. 
\end{remark}

\proof{Proof of Proposition \ref{prop:convexrelax}}
Fix $(\bm{X}, \bm{Y})$ in \eqref{prob:lowrankrelaxation_orig} and set 
$$(\bm{W}_{x,x}, \bm{W}_{x,y}, \bm{W}_{y,y}):=(\mathrm{vec}(\bm{X}^\top) \mathrm{vec}(\bm{X}^\top)^\top, \mathrm{vec}(\bm{X}^\top) \mathrm{vec}(\bm{Y})^\top, \mathrm{vec}(\bm{Y}) \mathrm{vec}(\bm{Y})^\top).$$ 
It is sufficient to verify that $(\bm{X}, \bm{Y}, \bm{W}_{x,x}, \bm{W}_{x,y}, \bm{W}_{y,y})$ is feasible for \eqref{eqn:quadraticShorrelax_lowrank}---it obviously attains the same objective value. First, by construction, the semidefinite constraint is satisfied (at equality). Moreover, we have 
\begin{align*}
    \bm{YY}^\top = \bm{Y} &\implies \sum_{i=1}^n \bm{W}_{y,y}^{(i,i)} = \bm{Y}, \\
    \bm{X}^\top \bm{Y}^\top = \bm{X}^\top &\implies \sum_{i \in [n]}\bm{W}_{x,y}^{(i,i)}=\bm{X}^\top. 
\end{align*}
\hfill\Halmos
\endproof

Unfortunately, \eqref{eqn:quadraticShorrelax_lowrank} is not compact as it involves one semidefinite constraint of dimension $n^2 + nm + 1$. 
Therefore, a natural research question is whether it is possible to eliminate any variables from \eqref{eqn:quadraticShorrelax_lowrank} without altering its optimal objective value. We now answer this question affirmatively. 

\begin{theorem}\label{thm:shorlowrankequiv}
Problem \eqref{eqn:quadraticShorrelax_lowrank} is equivalent to
\begin{equation}
\begin{aligned}\label{eqn:quadraticShorrelax_lowrank2}
    \min_{\bm{Y} \in \mathcal{S}^n_+ \, : \, \bm{Y} \preceq \bm{I},\ \mathrm{tr}(\bm{Y}) \leq k} \quad 
    \min_{\substack{\bm{X} \in  \mathbb{R}^{n \times m} \, \\ \bm{W}_{x,x} \in \mathcal{S}^{nm}_+}} \quad & \lambda \cdot \mathrm{tr}(\bm{Y})+\left\langle \bm{W}_{x,x},  \bm{H} \right\rangle+\langle \bm{D}, \bm{X}\rangle \\
    \text{s.t.} \quad 
    & \bm{W}_{x,x}\succeq \mathrm{vec}(\bm{X}^\top)\mathrm{vec}(\bm{X}^\top)^\top,\\
    & \langle \bm{Q}_i, \bm{W}_{x,x}\rangle+\langle \bm{E}_i, \bm{X}\rangle\leq b_i, \ \forall i \in \mathcal{I},\\
    & \begin{pmatrix}
        \sum_{i \in [n]}\bm{W}_{x,x}^{(i,i)} & \bm{X}^\top\\ \bm{X} & \bm{Y}
    \end{pmatrix}\succeq \bm{0}.
    \end{aligned}
\end{equation} 
\end{theorem}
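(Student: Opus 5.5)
The proof has two directions. We show that projecting any feasible point of \eqref{eqn:quadraticShorrelax_lowrank} onto $(\bm{X},\bm{Y},\bm{W}_{x,x})$ gives a feasible point of \eqref{eqn:quadraticShorrelax_lowrank2}. Conversely, we extend any feasible point of \eqref{eqn:quadraticShorrelax_lowrank2} by suitable $(\bm{W}_{x,y},\bm{W}_{y,y})$ to a feasible point of \eqref{eqn:quadraticShorrelax_lowrank}. The objective depends only on $(\bm{X},\bm{Y},\bm{W}_{x,x})$, so both problems then have the same value.

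\textbf{Relaxation \eqref{eqn:quadraticShorrelax_lowrank} $\Rightarrow$ \eqref{eqn:quadraticShorrelax_lowrank2}.} A Schur complement on the leading $1\times 1$ block of the big moment matrix gives $\bm{W}_{x,x}\succeq \mathrm{vec}(\bm{X}^\top)\mathrm{vec}(\bm{X}^\top)^\top$. For the last constraint, recall that the $i$th $m$-block of $\mathrm{vec}(\bm{X}^\top)$ is the $i$th row of $\bm{X}$, and the $i$th $n$-block of $\mathrm{vec}(\bm{Y})$ is $\bm{Y}_i$. For each $i\in[n]$, the principal submatrix formed by the $i$th block of the $x$-part and the $i$th block of the $y$-part is
$$\begin{pmatrix}\bm{W}_{x,x}^{(i,i)} & \bm{W}_{x,y}^{(i,i)}\\ \bm{W}_{x,y}^{(i,i)\top} & \bm{W}_{y,y}^{(i,i)}\end{pmatrix}\succeq\bm{0}.$$
Summing over $i$ and using the two linear constraints $\sum_i \bm{W}_{y,y}^{(i,i)}=\bm{Y}$ and $\sum_i\bm{W}_{x,y}^{(i,i)}=\bm{X}^\top$ yields exactly the $(n+m)$-dimensional constraint of \eqref{eqn:quadraticShorrelax_lowrank2}.

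\textbf{Relaxation \eqref{eqn:quadraticShorrelax_lowrank2} $\Rightarrow$ \eqref{eqn:quadraticShorrelax_lowrank}.} Set $\bm{x}=\mathrm{vec}(\bm{X}^\top)$, $\bm{y}=\mathrm{vec}(\bm{Y})$, $\bm{Z}=\bm{W}_{x,x}-\bm{x}\bm{x}^\top\succeq\bm{0}$ and $\bm{S}=\sum_i\bm{W}_{x,x}^{(i,i)}$. We look for
$$\bm{W}_{x,y}=\bm{x}\bm{y}^\top+\bm{B},\qquad \bm{W}_{y,y}=\bm{y}\bm{y}^\top+\bm{C},\qquad \begin{pmatrix}\bm{Z}&\bm{B}\\ \bm{B}^\top&\bm{C}\end{pmatrix}\succeq\bm{0}.$$
With this choice the moment matrix is a rank-one PSD term plus a PSD term, hence PSD. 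Since $\sum_i \bm{Y}_i\bm{Y}_i^\top=\bm{Y}^2$ and $\sum_i \bm{X}_{i,.}^\top\bm{Y}_i^\top=\bm{X}^\top\bm{Y}$, the two linear constraints become
$$\sum_i\bm{B}^{(i,i)}=\bm{X}^\top(\bm{I}-\bm{Y}),\qquad \sum_i\bm{C}^{(i,i)}=\bm{Y}-\bm{Y}^2.$$

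We take a blockwise linear image of $\bm{Z}$. Fix $\bm{L}\in\mathbb{R}^{n\times m}$ and set $\bm{B}=\bm{Z}(\bm{I}_n\otimes\bm{L}^\top)$ and $\bm{C}=(\bm{I}_n\otimes\bm{L})\bm{Z}(\bm{I}_n\otimes\bm{L}^\top)+\bm{I}_n\otimes\bm{R}/n$ with $\bm{R}\succeq\bm{0}$. The block matrix is then PSD, being a congruence of $\bm{Z}$ plus a PSD term. Its diagonal-block sums are $(\bm{S}-\bm{X}^\top\bm{X})\bm{L}^\top$ and $\bm{L}(\bm{S}-\bm{X}^\top\bm{X})\bm{L}^\top+\bm{R}$. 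It therefore suffices to find $\bm{L}$ with
$$(\bm{S}-\bm{X}^\top\bm{X})\bm{L}^\top=\bm{X}^\top(\bm{I}-\bm{Y}),\qquad \bm{L}(\bm{S}-\bm{X}^\top\bm{X})\bm{L}^\top\preceq\bm{Y}-\bm{Y}^2.$$
By the generalized Schur complement lemma, such an $\bm{L}$ (namely $\bm{L}^\top=(\bm{S}-\bm{X}^\top\bm{X})^\dag\bm{X}^\top(\bm{I}-\bm{Y})$) exists precisely when
$$\bm{M}:=\begin{pmatrix}\bm{S}-\bm{X}^\top\bm{X} & \bm{X}^\top(\bm{I}-\bm{Y})\\ (\bm{I}-\bm{Y})\bm{X}&\bm{Y}-\bm{Y}^2\end{pmatrix}=\begin{pmatrix}\bm{S}&\bm{X}^\top\\ \bm{X}&\bm{Y}\end{pmatrix}-\begin{pmatrix}\bm{X}^\top\\ \bm{Y}\end{pmatrix}\begin{pmatrix}\bm{X}&\bm{Y}\end{pmatrix}\succeq\bm{0}.$$
We then take $\bm{R}$ to be the resulting PSD slack $\bm{Y}-\bm{Y}^2-\bm{L}(\bm{S}-\bm{X}^\top\bm{X})\bm{L}^\top$.

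\textbf{Main obstacle: showing $\bm{M}\succeq\bm{0}$ from the constraints of \eqref{eqn:quadraticShorrelax_lowrank2} and $\bm{0}\preceq\bm{Y}\preceq\bm{I}$.} The scalar case is reassuring: with $n=m=1$, $\bm{M}\succeq\bm{0}$ reduces to $wy\geq x^2$, which is exactly the given constraint. The general argument I would try has three steps.
\begin{enumerate}
\item Diagonalize $\bm{Y}=\bm{U}\bm{\Lambda}\bm{U}^\top$ with $\bm{\Lambda}=\mathrm{diag}(\lambda_j)$, $0\leq\lambda_j\leq 1$, and rotate $\bm{X}$ accordingly. The range condition from Schur gives $\bm{X}=\bm{Y}\bm{Y}^\dag\bm{X}$ and $\bm{S}\succeq\bm{X}^\top\bm{Y}^\dag\bm{X}$.
\item Replace $\bm{S}$ by the smaller matrix $\bm{X}^\top\bm{Y}^\dag\bm{X}$, which only decreases the top-left block of $\bm{M}$. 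In the rotated coordinates the top-left block becomes $\bm{X}^\top(\bm{Y}^\dag-\bm{I})\bm{X}$ restricted to $\mathrm{range}(\bm{Y})$.
\item On that range, factor $\bm{M}=\bm{V}^\top\mathrm{diag}\big((1-\lambda_j)/\lambda_j\big)\bm{V}$ with $\bm{V}=[\bm{X},\ \bm{Y}]$ in the eigenbasis. This is PSD because $\lambda_j\in(0,1]$. One checks the blocks: $\bm{Y}(\bm{Y}^\dag-\bm{I})\bm{Y}=\bm{Y}-\bm{Y}^2$ and $\bm{X}^\top(\bm{Y}^\dag-\bm{I})\bm{Y}=\bm{X}^\top(\bm{I}-\bm{Y})$.
\end{enumerate}
The steps I expect to need the most care are the pseudoinverse and range bookkeeping when $\bm{Y}$ is singular (where $\lambda_j=0$) and the rotation between coordinates.
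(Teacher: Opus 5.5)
Your proposal is correct. The forward direction, summing the principal $(x_i,y_i)$ blocks, is exactly the paper's. The reverse direction takes a genuinely different route.

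\textbf{The paper's reverse direction.} The paper does not keep the given $\bm{Y}$. It first shrinks it to $\bm{X}\bm{S}^\dagger\bm{X}^\top\preceq\bm{Y}$, with $\bm{S}=\sum_i\bm{W}_{x,x}^{(i,i)}$. This is harmless because $\lambda\ge 0$ and the constraints on $\bm{Y}$ are monotone. With $\bm{U}=\bm{S}^\dagger\bm{X}^\top$, it then obtains $\bm{W}_{x,y},\bm{W}_{y,y}$ by congruence of
$\begin{pmatrix}1 & \bm{x}^\top & \bm{x}^\top\\ \bm{x} & \bm{W}_{x,x} & \bm{W}_{x,x}\\ \bm{x} & \bm{W}_{x,x} & \bm{W}_{x,x}\end{pmatrix}$
with $\mathrm{Diag}(1,\bm{I}_{nm},\bm{I}_n\otimes\bm{U})$. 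This gives $\bm{W}_{x,y}^{(i,j)}=\bm{W}_{x,x}^{(i,j)}\bm{U}$ and $\bm{W}_{y,y}^{(i,j)}=\bm{U}^\top\bm{W}_{x,x}^{(i,j)}\bm{U}$. Both linear constraints then collapse to $\bm{S}\bm{S}^\dagger\bm{X}^\top=\bm{X}^\top$. The argument is short and never uses $\bm{Y}\preceq\bm{I}$ in the construction. However, it only yields an equal-or-lower objective, i.e.\ equality of optimal values, and it relies on $\lambda\ge 0$.

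\textbf{What your route buys.} Your rank-one-plus-slack construction keeps $\bm{Y}$ fixed. It therefore proves the stronger statement that the feasible set of \eqref{eqn:quadraticShorrelax_lowrank2} is exactly the projection of that of \eqref{eqn:quadraticShorrelax_lowrank}. Equivalence then holds for any sign of $\lambda$ and under arbitrary extra constraints on $\bm{Y}$. The price is the additional Schur-complement step for $\bm{L}$ and an essential use of $\bm{Y}\preceq\bm{I}$.

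\textbf{Closing your main obstacle.} The step you flagged closes without any rotation or restriction to $\mathrm{range}(\bm{Y})$. Set $\bm{P}:=\bm{Y}\bm{Y}^\dagger$, so that $\bm{P}\bm{X}=\bm{X}$. One checks directly that
\[
\bm{M}=\begin{pmatrix}\bm{S}-\bm{X}^\top\bm{Y}^\dagger\bm{X} & \bm{0}\\ \bm{0} & \bm{0}\end{pmatrix}+\begin{pmatrix}\bm{X}^\top\\ \bm{Y}\end{pmatrix}\bigl(\bm{Y}^\dagger-\bm{P}\bigr)\begin{pmatrix}\bm{X} & \bm{Y}\end{pmatrix}.
\]
Both terms are PSD. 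The first is PSD by the Schur complement. The second is PSD because $\bm{Y}^\dagger-\bm{P}$ has eigenvalues $1/\lambda_j-1\ge 0$ on $\mathrm{range}(\bm{Y})$ and $0$ on its kernel.
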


\proof{Proof of Theorem \ref{thm:shorlowrankequiv}}
We show that given a feasible solution to either problem, we can generate a feasible solution to the other problem with an equal or lower objective value.

Suppose that $(\bm{X}, \bm{Y}, \bm{W}_{x,x}, \bm{W}_{x,y}, \bm{W}_{y,y})$ is feasible in \eqref{eqn:quadraticShorrelax_lowrank}. Then, by summing the PSD principal submatrices containing $\bm{W}_{x,x}^{(i,i)}, \bm{W}_{x,y}^{(i,i)}, \bm{W}_{y,y}^{(i,i)}$ for each $i \in [n]$, we have that 
\begin{align*}
 \begin{pmatrix}
        \sum_{i \in [n]}\bm{W}_{x,x}^{(i,i)} & \sum_{i \in [n]}\bm{W}_{x,y}^{(i,i)}\\ \sum_{i \in [n]}\bm{W}_{x,y}^{(i,i)\top} & \sum_{i \in [n]}\bm{W}_{y,y}^{(i,i)}
    \end{pmatrix} \succeq \bm{0}.  
\end{align*}
Moreover, from \eqref{eqn:quadraticShorrelax_lowrank} we have that $\sum_{i \in [n]}\bm{W}_{\blue x,y}^{(i,i)}=\bm{X}^\top$ and $\sum_{i \in [n]}\bm{W}_{y,y}^{(i,i)}=\bm{Y}$. Thus, $(\bm{X}, \bm{Y}, \bm{W}_{x,x})$ is feasible in \eqref{eqn:quadraticShorrelax_lowrank2} and attains the same objective value.

Next, suppose that $(\bm{X}, \bm{Y}, \bm{W}_{x,x})$ is feasible in \eqref{eqn:quadraticShorrelax_lowrank2}. 
By the Schur complement lemma, we must have $\bm{Y}\succeq \bm{X} (\sum_{i}\bm{W}_{x,x}^{(i,i)})^\dagger \bm{X}^\top$. Since the cost matrix associated with $\bm{Y}$ in the objective, $\lambda \bm{I}$, is positive semidefinite, we can set $\bm{Y}:=\bm{X} (\sum_{i}\bm{W}_{x,x}^{(i,i)})^\dagger \bm{X}^\top$ without loss of optimality---doing so cannot increase the objective value nor make the constraints $\bm{0} \preceq \bm{Y} \preceq \bm{I}_n$ and $\operatorname{tr}(\bm{Y}) \leq k$ violated. To construct admissible matrices $\bm{W}_{x,y}$ and $\bm{W}_{y,y}$, let us first define the auxiliary matrix
\begin{align*}
\bm{U}:=\left(\sum_{i \in [n]}\bm{W}_{x,x}^{(i,i)}\right)^\dagger \bm{X}^\top \in \mathbb{R}^{m \times n},   
\end{align*}
and observe that $\bm{Y}=\bm{U}^\top \bm{X}^\top = \bm{X} \bm{U}$. Then, we define the matrix 
\begin{align*}
\bm{M} := 
& \begin{pmatrix} 1 & \bm{0} & \bm{0} \\ \bm{0} & \bm{I}_{nm} & \bm{0} \\ \bm{0} & \bm{0} & \bm{I}_{n} \otimes \bm{U} \end{pmatrix}^\top 
 \begin{pmatrix} 
   1 & \mathrm{vec}(\bm{X}^\top)^\top &  \mathrm{vec}(\bm{X}^\top)^\top \\ \mathrm{vec}(\bm{X}^\top) & \bm{W}_{x,x} & \bm{W}_{x,x}\\
   \mathrm{vec}(\bm{X}^\top) & \bm{W}_{x,x} & \bm{W}_{x,x}\end{pmatrix} \begin{pmatrix} 1 & \bm{0} & \bm{0} \\ \bm{0} & \bm{I}_{nm} & \bm{0} \\ \bm{0} & \bm{0} & \bm{I}_n \otimes \bm{U} \end{pmatrix}.
\end{align*}
We let $\bm{W}_{x,y}$, $\bm{W}_{y,y}$ denote the relevant off-diagonal blocks, i.e., 
\begin{align*}
\bm{M} =: \begin{pmatrix} 
1 & \mathrm{vec}(\bm{X}^\top)^\top & \mathrm{vec}(\bm{Y})^\top\\
\mathrm{vec}(\bm{X}^\top) & \bm{W}_{x,x} & \bm{W}_{x,y}\\
 \mathrm{vec}(\bm{Y})&    \bm{W}_{x,y}^\top & \bm{W}_{y,y}\end{pmatrix}.
\end{align*}
Since $\bm{Y}= \bm{U}^\top \bm{X}^\top$, we have $\mathrm{vec}(\bm{Y}) = \mathrm{vec}(\bm{U}^\top \bm{X}^\top) = (\bm{I}_n \otimes \bm{U}^\top) \mathrm{vec}(\bm{X}^\top)$ and thus our construction is consistent with the existing value of $\bm{Y}$. We now verify that $(\bm{X}, \bm{Y}, \bm{W}_{x,x}, \bm{W}_{x,y}, \bm{W}_{y,y})$ is feasible for \eqref{eqn:quadraticShorrelax_lowrank}.
By construction, $\bm{M} \succeq \bm{0}$. 
Thus, $(\bm{X}, \bm{Y}, \bm{W}_{x,x}, \bm{W}_{x,y}, \bm{W}_{y,y})$ satisfies the semidefinite constraint in \eqref{eqn:quadraticShorrelax_lowrank}.
Next, by construction, $\bm{W}_{x,y}$ and $\bm{W}_{y,y}$ can be decomposed into $n \times n$ blocks satisfying:
\begin{align*}
  \bm{W}_{x,y}^{(i,j)}= \bm{W}_{x,x}^{(i,j)}\bm{U}, \ \bm{W}_{y,y}^{(i,j)}=\bm{U}^\top \bm{W}_{x,x}^{(i,j)}\bm{U}.
\end{align*}
Summing the on-diagonal blocks of these matrices then reveals that
\begin{align*}
  \sum_{i \in [n]}\bm{W}_{x,y}^{(i,i)}=&\sum_{i \in [n]}\bm{W}_{x,x}^{(i,i)}\bm{U} = \left( \sum_{i \in [n]}\bm{W}_{x,x}^{(i,i)} \right) \left(\sum_{j \in [n]}\bm{W}_{x,x}^{(j,j)}\right)^\dagger \bm{X}^\top=\bm{X}^\top,\\
  \sum_{i \in [n]}\bm{W}_{y,y}^{(i,i)}=& \sum_{i \in [n]}\bm{U}^\top\bm{W}_{x,x}^{(i,i)}\bm{U} =
  \bm{U}^\top \left( \sum_{i \in [n]}\bm{W}_{x,x}^{(i,i)}\bm{U} \right) = \bm{U}^\top \bm{X}^\top = \bm{Y}.
\end{align*}
Therefore, we conclude that $(\bm{X}, \bm{Y}, \bm{W}_{x,x}, \bm{W}_{x,y}, \bm{W}_{y,y})$ is feasible in \eqref{eqn:quadraticShorrelax_lowrank} and attains an equal or lower objective value. Thus, both relaxations are equivalent.\hfill\Halmos
\endproof

Problem \eqref{eqn:quadraticShorrelax_lowrank2} is much more compact {\color{black}than} \eqref{eqn:quadraticShorrelax_lowrank}, as it does not require {\color{black}introducing} the variables $\bm{W}_{y,y} \in \mathcal{S}^{n^2}_+$ or $\bm{W}_{x,y} \in \mathbb{R}^{nm \times n^2}$. 
Instead, \eqref{eqn:quadraticShorrelax_lowrank2} only involves two semidefinite constraints of dimension $nm + 1$ and $n+m$ respectively.
Notably, the proof of Theorem \ref{thm:shorlowrankequiv} also provides a recipe for reconstructing an optimal $\bm{W}_{y,y}$ given an optimal solution ($\bm{Y}, \bm{X}, \bm{W}_{x,x}$) to \eqref{eqn:quadraticShorrelax_lowrank2}. Namely, compute the auxiliary matrix $\bm{U}:=\left(\sum_{i \in [n]}\bm{W}_{x,x}^{(i,i)}\right)^\dagger \bm{X}^\top$ and set $\bm{W}_{y,y}:=(\bm{I}_n \otimes \bm{U})^\top \bm{W}_{x,x}(\bm{I}_n \otimes \bm{U})$.




Finally, it is interesting to consider whether the relaxation developed here is at least as strong as the matrix perspective relaxation of \citet{bertsimas2021new}. We now prove this is indeed the case. 
{\color{black}The relaxation of }\cite{bertsimas2021new} only applies to partially separable objectives. Hence, we first need to impose more structure on the objective of \eqref{prob:lowrankrelaxation_orig} to compare relaxations. 

\begin{proposition}\label{prop:mprt_is_weaker}
Assume {\color{black}that $\bm{H}$ is positive definite}, so that the term $\langle \bm{H}, \mathrm{vec}(\bm{X}^\top)\mathrm{vec}(\bm{X}^\top)^\top\rangle+\langle \bm{D}, \bm{X}\rangle$ in Problem \eqref{prob:lowrankrelaxation_orig} can be rewritten as the partially separable term $\frac{1}{2\gamma}\Vert \bm{X}\Vert_F^2+h(\bm{X})$, where $h$ is convex in $\bm{X}$. Further, {\color{black}assume the matrices $\bm{Q}_i$ are positive definite, so that} the terms $\langle \bm{Q}_i, \mathrm{vec}(\bm{X}^\top)\mathrm{vec}(\bm{X}^\top)^\top\rangle+\langle \bm{E}_i, \bm{X}\rangle$ in Problem \eqref{prob:lowrankrelaxation_orig}  can be rewritten as the partially separable terms $\frac{1}{2\gamma}\Vert \bm{X}\Vert_F^2+h_i(\bm{X})$ {\color{black}for a common $\gamma$}, where $h_i$ are convex in $\bm{X}$. Then, the optimal value of Problem \eqref{eqn:quadraticShorrelax_lowrank} is at least as large as the relaxation of \cite{bertsimas2021new}
\begin{align}\label{prob:mprt_quadratic}
\min_{\bm{Y} \in \mathrm{Conv}\left(\mathcal{Y}^k_n\right)} \min_{\bm{X} \in \mathbb{R}^{n \times m}, \bm{\theta} \in \mathcal{S}^m_+} \quad & \lambda \cdot \mathrm{tr}(\bm{Y})+\frac{1}{2\gamma}\mathrm{tr}(\bm{\theta})+h(\bm{X})\\
    \text{s.t.} \quad & \frac{1}{2\gamma}\mathrm{tr}(\bm{\theta})+h_i(\bm{X})\leq b_i, \ \forall i \in \mathcal{I},\    
    \ \begin{pmatrix} \bm{\theta} & \bm{X}^\top \\ \bm{X} & \bm{Y}\end{pmatrix} \succeq \bm{0},\nonumber    
\end{align}
\end{proposition}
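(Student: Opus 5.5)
By Theorem~\ref{thm:shorlowrankequiv}, Problem \eqref{eqn:quadraticShorrelax_lowrank} has the same optimal value as the compact relaxation \eqref{eqn:quadraticShorrelax_lowrank2}. It therefore suffices to show that every feasible point of \eqref{eqn:quadraticShorrelax_lowrank2} can be mapped to a feasible point of \eqref{prob:mprt_quadratic} with an objective value no larger. Given a feasible $(\bm{X},\bm{Y},\bm{W}_{x,x})$ of \eqref{eqn:quadraticShorrelax_lowrank2}, I will keep $\bm{X}$ and $\bm{Y}$ and set
\[
\bm{\theta}:=\sum_{i\in[n]}\bm{W}_{x,x}^{(i,i)} \in \mathcal{S}^m_+ .
\]
Feasibility of this choice is immediate on two counts. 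The set $\{\bm{Y}\in\mathcal{S}^n_+:\bm{Y}\preceq\bm{I},\ \mathrm{tr}(\bm{Y})\le k\}$ is exactly $\mathrm{Conv}(\mathcal{Y}^k_n)$, by the result of \citet{overton1992sum} quoted in Section~\ref{ssec:notation}. In addition, the last semidefinite constraint of \eqref{eqn:quadraticShorrelax_lowrank2} is literally the constraint $\begin{pmatrix}\bm{\theta} & \bm{X}^\top\\ \bm{X} & \bm{Y}\end{pmatrix}\succeq\bm{0}$ of \eqref{prob:mprt_quadratic}.

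The remaining work is to compare the objective and the quadratic constraints. The partially separable rewriting means precisely that $\langle \bm{H}, \mathrm{vec}(\bm{X}^\top)\mathrm{vec}(\bm{X}^\top)^\top\rangle+\langle\bm{D},\bm{X}\rangle = \frac{1}{2\gamma}\|\bm{X}\|_F^2 + h(\bm{X})$. Here $h(\bm{X})=\langle \bm{H}-\frac{1}{2\gamma}\bm{I}, \mathrm{vec}(\bm{X}^\top)\mathrm{vec}(\bm{X}^\top)^\top\rangle + \langle\bm{D},\bm{X}\rangle$, and convexity of $h$ amounts to $\bm{H}_0:=\bm{H}-\frac{1}{2\gamma}\bm{I}_{nm}\succeq\bm{0}$; this is what positive definiteness of $\bm{H}$ buys for $\gamma$ large enough. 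The relaxation's objective then splits as
\[
\langle \bm{W}_{x,x},\bm{H}\rangle+\langle\bm{D},\bm{X}\rangle=\tfrac{1}{2\gamma}\mathrm{tr}(\bm{W}_{x,x})+\langle \bm{W}_{x,x},\bm{H}_0\rangle+\langle\bm{D},\bm{X}\rangle .
\]
Two facts finish the comparison. First, $\mathrm{tr}(\bm{W}_{x,x})=\sum_i\mathrm{tr}(\bm{W}_{x,x}^{(i,i)})=\mathrm{tr}(\bm{\theta})$. Second, because $\bm{W}_{x,x}\succeq \mathrm{vec}(\bm{X}^\top)\mathrm{vec}(\bm{X}^\top)^\top$ and $\bm{H}_0\succeq\bm{0}$, we have $\langle \bm{W}_{x,x},\bm{H}_0\rangle\ge\langle \mathrm{vec}(\bm{X}^\top)\mathrm{vec}(\bm{X}^\top)^\top,\bm{H}_0\rangle$. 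Together these give
\[
\langle \bm{W}_{x,x},\bm{H}\rangle+\langle\bm{D},\bm{X}\rangle \;\ge\; \tfrac{1}{2\gamma}\mathrm{tr}(\bm{\theta})+h(\bm{X}).
\]
Repeating the identical argument with $\bm{Q}_i$, $\bm{E}_i$, $h_i$ and the common $\gamma$ shows $\frac{1}{2\gamma}\mathrm{tr}(\bm{\theta})+h_i(\bm{X})\le \langle\bm{Q}_i,\bm{W}_{x,x}\rangle+\langle\bm{E}_i,\bm{X}\rangle\le b_i$, so the constraints of \eqref{prob:mprt_quadratic} hold. The $\lambda\,\mathrm{tr}(\bm{Y})$ terms coincide. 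Hence the optimal value of \eqref{prob:mprt_quadratic} is at most that of \eqref{eqn:quadraticShorrelax_lowrank2}, which equals that of \eqref{eqn:quadraticShorrelax_lowrank}.

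The main obstacle is interpretive rather than computational. The argument needs the quadratic part of $h$ (and of each $h_i$) to be the PSD form $\bm{H}-\frac{1}{2\gamma}\bm{I}$ (resp.\ $\bm{Q}_i-\frac{1}{2\gamma}\bm{I}$), so that replacing the rank-one lift by $\bm{W}_{x,x}$ can only increase these terms. I will make explicit that convexity of $h$ as stated is equivalent to this PSD condition, which is what the hypothesis is meant to encode, and I will check that the block-trace identity holds with the paper's $\mathrm{vec}(\bm{X}^\top)$ ordering. Since only the total trace of $\bm{W}_{x,x}$ enters, that ordering does not affect the identity.
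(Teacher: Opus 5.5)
Your proposal is correct and follows essentially the same route as the paper. Both proofs pass to the compact relaxation via Theorem~\ref{thm:shorlowrankequiv}, set $\bm{\theta}:=\sum_{i\in[n]}\bm{W}_{x,x}^{(i,i)}$, and combine $\bm{H}-\frac{1}{2\gamma}\bm{I}\succeq\bm{0}$ and $\bm{Q}_i-\frac{1}{2\gamma}\bm{I}\succeq\bm{0}$ with $\bm{W}_{x,x}\succeq \mathrm{vec}(\bm{X}^\top)\mathrm{vec}(\bm{X}^\top)^\top$ to bound the objective and constraints. Your explicit identity $\mathrm{tr}(\bm{W}_{x,x})=\mathrm{tr}(\bm{\theta})$ spells out a step the paper leaves implicit.
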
    
\proof{Proof of Proposition \ref{prop:mprt_is_weaker}}
Given the equivalence between Problems \eqref{eqn:quadraticShorrelax_lowrank}--\eqref{eqn:quadraticShorrelax_lowrank2} proven in Theorem~\ref{thm:shorlowrankequiv} {\color{black}and the fact that the quadratic forms are all positive semidefinite}, it suffices to show that the constraints in \eqref{eqn:quadraticShorrelax_lowrank2} imply the constraints in \eqref{prob:mprt_quadratic}. Letting 
$\bm{\theta} := \sum_{i \in [n]} \bm{W}_{x,x}^{(i,i)}$, we observe that $\bm{\theta}$ is feasible for \eqref{prob:mprt_quadratic}. {\color{black}Similarly, convexity of $h, h_i$ implies $\bm{H}-\frac{1}{2\gamma}\bm{I}\succeq \bm{0}$, $\bm{Q}_i-\frac{1}{2\gamma}\bm{I}\succeq \bm{0}$. Therefore, the constraint $\bm{W}_{x,x}\succeq \mathrm{vec}(\bm{X}^\top)\mathrm{vec}(\bm{X}^\top)^\top$ yields the inequalities
\begin{align*}
    \frac{1}{2\gamma}\operatorname{tr}(\bm{\theta})+h(\bm{X})\le &\langle \bm{H},\bm{W}_{x,x}\rangle+\langle \bm{D},\bm{X}\rangle,\\
    \frac{1}{2\gamma}\operatorname{tr}(\bm{\theta})+h_i(\bm{X})\le &\langle \bm{Q}_i,\bm{W}_{x,x}\rangle+\langle \bm{E}_i,\bm{X}\rangle\le b_i, \ \forall i \in \mathcal{I},
\end{align*}
}
which completes the proof.
\hfill\Halmos
\endproof
The proof of Proposition \ref{prop:mprt_is_weaker} reveals that in the boundary case of reduced rank regression (see \S \ref{ssec:ex.RRR}), our lifted relaxation \eqref{eqn:quadraticShorrelax_lowrank2} reduces to a perspective-type relaxation which can be perceived as decomposing the variable $\bm{\theta}$ in \eqref{prob:mprt_quadratic}, and strengthening the relaxation by imposing additional constraints on elements of this decomposition.

\subsection{Projection Cuts for Strengthening the Lifted Relaxation} \label{ssec:strongershor}
{\color{black}In this section, we propose a strategy for improving our compact lifted relaxations, sometimes significantly, by exploiting the fact that the rank constraint $\mathrm{rank}(\bm{X})\leq k$ is inherited by many linear images of $\bm{X}$. In particular, if $\bm{A}, \bm{B}$ are matrices of arbitrary dimension, then the rank constraint implies $\mathrm{rank}(\bm{AXB})\leq k.$}
{\color{black}
Formally, we have the following result:
\begin{theorem}\label{prop:proj_cuts}
Let $\bm{A} \in \mathbb{R}^{p \times n}, \bm{B} \in \mathbb{R}^{m \times q}$ be real matrices, and let us denote $\bm{D} := \bm{A} \otimes \bm{B}^\top \in\mathbb R^{pq\times nm}$. 
In particular, for any $\bm{X} \in \mathbb{R}^{n \times m}$, we have 
$\operatorname{vec}((\bm{AXB})^\top) = \bm{D}\operatorname{vec}(\bm{X}^\top)$. 
Further, for the lifted matrix $\bm{W}_{x,x}\in\mathcal{S}^{nm}$, let the auxiliary matrix $\bm{D}\bm{W}_{x,x}\bm{D}^\top\in\mathcal{S}^{pq}$ be partitioned into $p\times p$ blocks of size $q\times q$. Then, the following constraint is valid for Problem \eqref{prob:lowrankrelaxation_orig}, and may be imposed in \eqref{eqn:quadraticShorrelax_lowrank2}:
\begin{align}\label{eqn:extendedvalidineq}
        \exists \bm{Z}_{D}\in\mathrm{Conv}(\mathcal{Y}^k_p) \quad \text{s.t.} \quad &
        \begin{pmatrix}
        \sum_{a=1}^p (\bm{D}\bm{W}_{x,x}\bm{D}^\top)^{(a,a)} & (\bm{AXB})^\top \\
        \bm{AXB}  & \bm{Z}_{D}
        \end{pmatrix}
        \succeq 0, \bm{AYA^\top} \preceq \rho_{A}\bm{Z}_{D},
\end{align}
where $\rho_{A}:=\lambda_{\max}(\bm{AA^\top})$, and $\bm{Y}$ is the semidefinite matrix in \eqref{eqn:quadraticShorrelax_lowrank2}.
\end{theorem}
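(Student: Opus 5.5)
The plan is to verify validity directly at the rank-one lift of every feasible point of Problem \eqref{prob:lowrankrelaxation_orig}, exactly as in the proof of Proposition~\ref{prop:convexrelax}. Since \eqref{eqn:quadraticShorrelax_lowrank2} is a relaxation obtained by setting $\bm{W}_{x,x}:=\mathrm{vec}(\bm{X}^\top)\mathrm{vec}(\bm{X}^\top)^\top$, it suffices to show the following. For every $\bm{Y}\in\mathcal{Y}^k_n$ and $\bm{X}$ with $\bm{X}=\bm{YX}$, with this choice of $\bm{W}_{x,x}$, there exists $\bm{Z}_D\in\mathcal{Y}^k_p\subseteq\mathrm{Conv}(\mathcal{Y}^k_p)$ satisfying both conditions in \eqref{eqn:extendedvalidineq}. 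Adding the constraint to \eqref{eqn:quadraticShorrelax_lowrank2} then still yields a relaxation.

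\textbf{Step 1: simplify the first block.} Let $\bm{M}:=\bm{AXB}\in\mathbb{R}^{p\times q}$. Using the Kronecker identity $\mathrm{vec}((\bm{AXB})^\top)=\bm{D}\,\mathrm{vec}(\bm{X}^\top)$, we get $\bm{D}\bm{W}_{x,x}\bm{D}^\top=\mathrm{vec}(\bm{M}^\top)\mathrm{vec}(\bm{M}^\top)^\top$. Its $(a,a)$ block is $\bm{M}_{a,.}\bm{M}_{a,.}^\top$, so the diagonal blocks sum to $\bm{M}^\top\bm{M}$. The matrix inequality therefore reads
\begin{align*}
\begin{pmatrix}\bm{M}^\top\bm{M} & \bm{M}^\top\\ \bm{M} & \bm{Z}_D\end{pmatrix}\succeq\bm{0}.
\end{align*}
By the Schur complement lemma, this holds if and only if $\bm{Z}_D\succeq \bm{M}(\bm{M}^\top\bm{M})^\dagger\bm{M}^\top=\bm{P}_{\mathcal{R}(\bm{M})}$, the orthogonal projection onto the range of $\bm{M}$. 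The range condition is automatic, since $\mathcal{R}(\bm{M}^\top)=\mathcal{R}(\bm{M}^\top\bm{M})$.

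\textbf{Step 2: choose $\bm{Z}_D$.} Take $\bm{Z}_D:=\bm{P}_{\mathcal{R}(\bm{AY})}$, the projection onto the column space of $\bm{AY}$. This subspace equals the range of $\bm{AYA}^\top=(\bm{AY})(\bm{AY})^\top$, because $\bm{Y}=\bm{Y}^2=\bm{Y}\bm{Y}^\top$. Its rank is at most $\mathrm{rank}(\bm{Y})=\mathrm{tr}(\bm{Y})\le k$, so $\bm{Z}_D\in\mathcal{Y}^k_p$. Moreover $\bm{M}=\bm{AYXB}$, so $\mathcal{R}(\bm{M})\subseteq\mathcal{R}(\bm{AY})$. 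For nested subspaces the projections are ordered, giving $\bm{P}_{\mathcal{R}(\bm{M})}\preceq\bm{Z}_D$, and Step 1 yields the first condition.

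\textbf{Step 3: the second condition.} Since $\bm{Y}$ is a projection, $\bm{Y}\preceq\bm{I}$, hence $\bm{AYA}^\top\preceq\bm{AA}^\top$ and $\lambda_{\max}(\bm{AYA}^\top)\le\rho_A$. A PSD matrix is dominated by its largest eigenvalue times the projection onto its range. Therefore $\bm{AYA}^\top\preceq\lambda_{\max}(\bm{AYA}^\top)\,\bm{Z}_D\preceq\rho_A\bm{Z}_D$.

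\textbf{Expected obstacle.} The main care point is choosing one $\bm{Z}_D$ that works for both constraints at once. The projection onto $\mathcal{R}(\bm{AXB})$ alone would suffice for the Schur condition, but it may fail $\bm{AYA}^\top\preceq\rho_A\bm{Z}_D$ when $\mathrm{rank}(\bm{X})<\mathrm{tr}(\bm{Y})$ or when $\bm{B}$ kills directions. Taking the larger subspace $\mathcal{R}(\bm{AY})$ resolves both while keeping the rank at most $k$. The remaining checks (the Kronecker identity and the block sums) are routine.
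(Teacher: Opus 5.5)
Your proof is correct and follows the paper's argument: the same exact rank-one lift, the same choice of $\bm{Z}_D$ as the orthogonal projector onto the column space of $\bm{AY}$, and the same range-plus-eigenvalue argument for $\bm{AYA}^\top \preceq \rho_A \bm{Z}_D$. The only cosmetic difference is in the first LMI. The paper writes the block matrix as the Gram matrix of the stacked matrix with blocks $(\bm{AXB})^\top$ and $\bm{Z}_D$, using $\bm{Z}_D\bm{AXB}=\bm{AXB}$. You instead use the Schur complement together with the ordering of projections onto nested subspaces.
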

\begin{remark} A natural choice for $\bm{B}$ in Theorem \ref{prop:proj_cuts} is taking $q=m$ and $\bm{B}= \bm{I}_m$. Indeed, any variable $\bm{Z}_{D}$ satisfying \eqref{eqn:extendedvalidineq} for $\bm{I}_m$ also satisfies \eqref{eqn:extendedvalidineq} for any $\bm{B} \in \mathbb{R}^{m \times q}$ (simply left/right multiply the first semidefinite inequality by $\operatorname{Diag}(\bm{B}^\top,\bm{I}_p)$/$\operatorname{Diag}(\bm{B},\bm{I}_p)$). The benefit of using a generic $\bm{B}$ (especially one with $q < m$) is computational tractability.
    
\end{remark}
\proof{Proof of Theorem \ref{prop:proj_cuts}}
Let $(\bm{X},\bm{Y})$ be feasible for Problem \eqref{prob:lowrankrelaxation_orig}, and take the exact lifted matrix $\bm{W}_{x,x}=
        \operatorname{vec}(\bm{X}^\top)\operatorname{vec}(\bm{X}^\top)^\top.$ Further, for ease of notation, define $
        \bm{W}_{D}:=
        \bm{D}\bm{W}_{x,x}\bm{D}^\top\in\mathcal{S}^{pq},$ and $
        \bm{\Theta}_{D}(\bm{W}_{x,x})
        :=
        \sum_{a=1}^p \bm{W}_{D}^{(a,a)}\in\mathcal{S}^q.
$
By definition of $\bm{D}$, we have $ \bm{W}_{D} = \operatorname{vec}((\bm{AXB})^\top)\operatorname{vec}((\bm{AXB})^\top)^\top$ and $\bm{\Theta}_{D}(\bm{W}_{x,x}) = \sum_{a=1}^p \mathcal ((\bm{AXB})^\top)_{a,\cdot}^\top ((\bm{AXB})^\top)_{a,\cdot} = (\bm{AXB})^\top\bm{AXB}.$

Let $\bm{Z}_{D}$ be the orthogonal projector onto the column space of $\bm{AY}$. Since $\operatorname{rank}(\bm{AY})\le \operatorname{rank}(\bm{Y})\le k$, the matrix $\bm{Z}_{D}$ has rank at most $k$, so $\bm{Z}_{D}\in\mathcal{Y}_p^k$. 
In addition, from $\bm{Z}_{D}\bm{AY}=\bm{AY}$ and $\bm{X} = \bm{YX}$,
we have the relationship $\bm{Z}_{D}\bm{AXB}=\bm{AXB}$.
Consequently,
\[
        \begin{pmatrix}
        \bm{\Theta}_{D}(\bm{W}_{x,x}) & (\bm{AXB})^\top \\
        (\bm{AXB}) & \bm{Z}_{D}
        \end{pmatrix}
        =
        \begin{pmatrix}
        (\bm{AXB})^\top \bm{AXB} & (\bm{AXB})^\top \\
        \bm{AXB} & \bm{Z}_{D}
        \end{pmatrix} =
        \begin{pmatrix}
        (\bm{AXB})^\top \\
        \bm{Z}_{D}
        \end{pmatrix}
        \begin{pmatrix}
        (\bm{AXB})^\top \\
        \bm{Z}_{D}
        \end{pmatrix}^\top \succeq \bm{0}.
\]
Thus \eqref{eqn:extendedvalidineq} is satisfied by every feasible point of Problem \eqref{prob:lowrankrelaxation_orig} under its exact lift.

For the remaining inequality, observe that 
$\bm{AYA^\top}=\bm{AY}(\bm{AY})^\top$, because $\bm{Y}$ is an orthogonal projection matrix.
By construction, $\operatorname{span}(\bm{Z}_D) = \operatorname{span}(\bm{A} \bm{Y}) = \operatorname{span}(\bm{AYA}^\top)$.
Since $\bm{0}\preceq \bm{AYA^\top}\preceq \bm{AA^\top}\preceq \rho_A \bm{I}_p$, it follows that $\bm{AYA^\top} \preceq \rho_A \bm{Z}_D$. 
Finally, replacing $\bm{Z}_D\in\mathcal{Y}_p^k$ by
$\bm{Z}_D\in\operatorname{Conv}(\mathcal{Y}_p^k)$ preserves validity.
\hfill\Halmos \endproof
}

We support our discussion {\color{black}via Example \ref{example:notequivalent} in Section \ref{ssec:ex1}}, which shows that {\color{black} for a specific low-rank matrix completion problem, \eqref{eqn:quadraticShorrelax_lowrank} gives a relaxation with an optimality gap of around $50\%$, but imposing a small number of projection cuts of the above form closes the gap to $0$. }

\subsection{More Compact Relaxations for Block Diagonal Optimization Problems}\label{ssec:strongerresults_blockdiag}
In this section, we show that if the quadratic objective matrix $\bm{H}$ and the constraint matrices $\bm{Q}_i$ are block diagonal, then we can omit the off-diagonal blocks of $\bm{W}_{x,x}$. We demonstrate the utility of this observation throughout Section \ref{sec:examplesofrelaxations}, by leveraging it repeatedly to eliminate most of the decision variables from Problem \eqref{eqn:quadraticShorrelax_lowrank2} without altering the objective value, for several low-rank problems:
\begin{theorem}
\label{prop:common-core-penalty}
In Problem \eqref{eqn:quadraticShorrelax_lowrank2}, let the quadratic objective matrix $\bm{H}$ be block diagonal, i.e.:
\begin{align*}
        \bm{H}=\operatorname{Diag}(\bm{H}_1,\ldots,\bm{H}_n): \bm{H}_i\in\mathcal{S}^m,
\end{align*}
and let the constraint matrices $\bm{Q}_i$ similarly be block decomposable into $n$ matrices $\bm{Q}_{i,j}$, i.e., 
\begin{align*}
        \bm{Q}_i=\operatorname{Diag}(\bm{Q}_{i,1},\ldots,\bm{Q}_{i,n}): \bm{Q}_{i,j}\in\mathcal{S}^m.
\end{align*}
Then, letting $\bm{X}_{i,.}$ denote a column vector containing the $i$th row of $\bm{X}$, Problem \eqref{eqn:quadraticShorrelax_lowrank2} attains the same optimal value as the following optimization problem:
\begin{align}
\min_{\bm{Y}\in \mathrm{Conv}(\mathcal{Y}^k_n)}\ \min_{\bm{X} \in \mathbb{R}^{n \times m},\bm{S}^1,\ldots,\bm{S}^n \in \mathcal{S}^m_+}
\quad&
\langle \bm{D},\bm{X}\rangle+\sum_{i=1}^n\langle \bm{H}_i,\bm{S}^i\rangle+\lambda\cdot\operatorname{tr}(\bm{Y})\label{eqn:quadpenalty1}\\
\text{s.t.}\quad&
\bm{S}^i\succeq \bm{X}_{i,.}\bm{X}_{i,.}^\top\ i\in[n],\nonumber \
\begin{pmatrix}
\sum_{i=1}^n \bm{S}^i & \bm{X}^\top\\
\bm{X} & \bm{Y}
\end{pmatrix}\succeq \bm{0} ,\nonumber\\
& \sum_{j=1}^n \langle \bm{Q}_{i,j}, \bm{S}^j\rangle+\langle \bm{E}_i, \bm{X}\rangle\leq b_i \ \forall i \in \mathcal{I}.\nonumber
\end{align}
\end{theorem}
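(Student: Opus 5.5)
We prove the equality of optimal values by exhibiting objective-preserving maps between the feasible sets of \eqref{eqn:quadraticShorrelax_lowrank2} and \eqref{eqn:quadpenalty1}, one in each direction. Throughout, recall that $\mathrm{vec}(\bm{X}^\top)$ stacks the rows of $\bm{X}$. Hence the $(i,i)$ block of $\mathrm{vec}(\bm{X}^\top)\mathrm{vec}(\bm{X}^\top)^\top$, of size $m\times m$, is $\bm{X}_{i,.}\bm{X}_{i,.}^\top$. Also, the feasible set $\{\bm{Y}\in\mathcal{S}^n_+:\bm{Y}\preceq\bm{I},\mathrm{tr}(\bm{Y})\le k\}$ in \eqref{eqn:quadraticShorrelax_lowrank2} is exactly $\mathrm{Conv}(\mathcal{Y}^k_n)$ by the result of \citet{overton1992sum} quoted in the notation section.

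\textbf{From \eqref{eqn:quadraticShorrelax_lowrank2} to \eqref{eqn:quadpenalty1}.} Given a feasible $(\bm{X},\bm{Y},\bm{W}_{x,x})$, set $\bm{S}^i:=\bm{W}_{x,x}^{(i,i)}$. Since $\bm{W}_{x,x}-\mathrm{vec}(\bm{X}^\top)\mathrm{vec}(\bm{X}^\top)^\top\succeq\bm{0}$, each of its diagonal blocks is PSD, so $\bm{S}^i\succeq\bm{X}_{i,.}\bm{X}_{i,.}^\top$; this also gives $\bm{S}^i\in\mathcal{S}^m_+$. The block matrix with $\sum_i\bm{S}^i$ in its top-left corner coincides with the one in \eqref{eqn:quadraticShorrelax_lowrank2}, so it is PSD. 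Because $\bm{H}$ and $\bm{Q}_i$ are block diagonal, $\langle\bm{H},\bm{W}_{x,x}\rangle=\sum_i\langle\bm{H}_i,\bm{S}^i\rangle$ and $\langle\bm{Q}_i,\bm{W}_{x,x}\rangle=\sum_j\langle\bm{Q}_{i,j},\bm{S}^j\rangle$. Therefore the constraints and the objective value carry over unchanged.

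\textbf{From \eqref{eqn:quadpenalty1} to \eqref{eqn:quadraticShorrelax_lowrank2}.} This is the direction that requires a construction, because we must fill in the off-diagonal blocks of $\bm{W}_{x,x}$. Given feasible $(\bm{X},\bm{Y},\bm{S}^1,\ldots,\bm{S}^n)$, define
\[
\bm{W}_{x,x}:=\mathrm{vec}(\bm{X}^\top)\mathrm{vec}(\bm{X}^\top)^\top+\operatorname{Diag}\bigl(\bm{S}^1-\bm{X}_{1,.}\bm{X}_{1,.}^\top,\ldots,\bm{S}^n-\bm{X}_{n,.}\bm{X}_{n,.}^\top\bigr).
\]
That is, the off-diagonal blocks are $\bm{X}_{i,.}\bm{X}_{j,.}^\top$ and the diagonal blocks are $\bm{S}^i$. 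Each diagonal correction is PSD by the constraint $\bm{S}^i\succeq\bm{X}_{i,.}\bm{X}_{i,.}^\top$, so $\bm{W}_{x,x}\succeq\mathrm{vec}(\bm{X}^\top)\mathrm{vec}(\bm{X}^\top)^\top\succeq\bm{0}$. The diagonal blocks are exactly the $\bm{S}^i$, so $\sum_i\bm{W}_{x,x}^{(i,i)}=\sum_i\bm{S}^i$ and the $(n+m)$-dimensional semidefinite constraint holds. By block diagonality of $\bm{H}$ and $\bm{Q}_i$, the off-diagonal blocks do not enter the objective or the quadratic constraints. Consequently these take the same values as in \eqref{eqn:quadpenalty1}.

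The only point needing care is the second direction: we need a PSD completion of the off-diagonal blocks that dominates the rank-one lift. Choosing the off-diagonal blocks from the rank-one matrix itself, rather than setting them to zero, resolves this. Since both maps preserve the objective, the two optimal values coincide.
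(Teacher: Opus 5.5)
Your proof is correct and matches the paper's argument. The paper builds the same completion, with diagonal blocks $\bm{S}^i$ and off-diagonal blocks $\bm{X}_{i,.}\bm{X}_{j,.}^\top$, so that $\bm{W}-\mathrm{vec}(\bm{X}^\top)\mathrm{vec}(\bm{X}^\top)^\top$ is block diagonal with PSD blocks; it calls the reverse direction immediate, whereas you spell it out by setting $\bm{S}^i:=\bm{W}_{x,x}^{(i,i)}$.
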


{\color{black}

\proof{Proof of Theorem \ref{prop:common-core-penalty}}
  It suffices to show that given any feasible solution to \eqref{eqn:quadpenalty1}, we can construct a feasible solution to \eqref{eqn:quadraticShorrelax_lowrank2} with the same objective value; the converse is immediate. Let $(\bm{X}, \bm{Y}, \bm{S}^i)$ be feasible in \eqref{eqn:quadpenalty1}. Define the block matrix $\bm{W} \in \mathcal{S}^{nm \times nm}_+$ by setting $\bm{W}^{(i,i)}=\bm{S}^{i}$ and $\bm{W}^{(i,j)}=\bm{X}_{i,.} \bm{X}_{j,.}^\top$. Then, it is not hard to see that $\bm{W}-\mathrm{vec}(\bm{X}^\top)\mathrm{vec}(\bm{X}^\top)^\top$ is a block matrix with zero off-diagonal blocks and on-diagonal blocks $\bm{S}^i-\bm{X}_{i,.} \bm{X}_{i,.}^\top\succeq \bm{0}$. Thus, $\bm{W}-\mathrm{vec}(\bm{X}^\top)\mathrm{vec}(\bm{X}^\top)^\top$ is a positive semidefinite matrix, and $\bm{W}\succeq \mathrm{vec}(\bm{X}^\top)\mathrm{vec}(\bm{X}^\top)^\top$. Moreover, due to the block diagonal nature of the lifted semidefinite matrices $\bm{H}$ and $\bm{Q}_i$, $(\bm{X}, \bm{Y}, \bm{W})$ is feasible in \eqref{eqn:quadraticShorrelax_lowrank2} and attains the same objective value.
  \hfill \Halmos
  \endproof
  }

{\color{black}
Compared with \eqref{eqn:quadraticShorrelax_lowrank} or \eqref{eqn:quadraticShorrelax_lowrank2}, the semidefinite relaxation \eqref{eqn:quadpenalty1} involves $n$ positive semidefinite variables of dimension $m$ (vs. one PSD matrix of dimension $nm$ in \eqref{eqn:quadraticShorrelax_lowrank}) and $n+1$ semidefinite constraints of dimension $m+1$ (vs. one of dimension $nm+1$)---both relaxations involve one semidefinite constraint of dimension $n+m$.
}

{\color{black} 
Finally, we now present a restriction of Theorem \ref{prop:proj_cuts} that provides valid inequalities involving only the matrices $\bm{S}^i = \bm{X}_{i,\cdot}\bm{X}_{i,\cdot}^\top$, which can be used for the more compact relaxation \eqref{eqn:quadpenalty1}. 
\begin{corollary} \label{cor:proj_cuts}
Consider a matrix $\bm{A} \in \mathbb{R}^{p \times n}$ where each row and column has at most one non-zero coordinate. Namely, row $j$ of $\bm{A}$ is of the form $\alpha_j \bm{e}_{i(j)}^\top$ for some $\alpha_j \in \mathbb{R}$ and $i : j \mapsto i(j) \in [n]$ is injective.
Then, denoting $\bm{S}^i = \bm{X}_{i,\cdot}\bm{X}_{i,\cdot}^\top, i \in [n]$, 
the following constraint is valid for \eqref{prob:lowrankrelaxation_orig}, and may be imposed in \eqref{eqn:quadpenalty1} :
\begin{align}\label{eqn:extendedvalidineq.compact}
        \exists \bm{Z}_{A}\in\mathrm{Conv}(\mathcal{Y}^k_p) \quad \text{s.t.} \quad &
        \begin{pmatrix}
        \sum_{j \in [p]} \alpha_j^2 \bm{S}^{i(j)} & (\bm{AX})^\top \\
        \bm{AX}  & \bm{Z}_{A}
        \end{pmatrix}
        \succeq 0, \bm{A} \bm{Y} \bm{A}^\top \preceq (\max_{j \in [p]} \alpha_j^2) \,\bm{Z}_{A}.
\end{align}
\end{corollary}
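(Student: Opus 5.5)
The plan is to deduce the corollary from Theorem \ref{prop:proj_cuts} with $\bm{B}=\bm{I}_m$ (so $q=m$ and $\bm{D}=\bm{A}\otimes\bm{I}_m$). Then I will check that, for the special sparsity pattern of $\bm{A}$, the projection cut \eqref{eqn:extendedvalidineq} only involves the diagonal blocks $\bm{S}^i$ of $\bm{W}_{x,x}$. This makes it expressible in the compact relaxation \eqref{eqn:quadpenalty1}.

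\textbf{Step 1: the top-left block.} Write $\bm{D}=\bm{A}\otimes\bm{I}_m$, so its $(a,i)$ block of size $m\times m$ is $A_{a,i}\bm{I}_m$. For any $\bm{W}\in\mathcal{S}^{nm}$ with blocks $\bm{W}^{(i,i')}$, the $(a,a)$ block of $\bm{D}\bm{W}\bm{D}^\top$ is $\sum_{i,i'}A_{a,i}A_{a,i'}\bm{W}^{(i,i')}$. Row $a$ of $\bm{A}$ has a single nonzero entry, $\alpha_a$ at column $i(a)$. Hence this block equals $\alpha_a^2\bm{W}^{(i(a),i(a))}$, and
\[
\sum_{a=1}^p(\bm{D}\bm{W}\bm{D}^\top)^{(a,a)}=\sum_{j\in[p]}\alpha_j^2\bm{W}^{(i(j),i(j))}.
\]
Only diagonal blocks appear. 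For the exact lift these blocks are $\bm{S}^{i}=\bm{X}_{i,\cdot}\bm{X}_{i,\cdot}^\top$, which gives the first block of \eqref{eqn:extendedvalidineq.compact}.

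\textbf{Step 2: the constant $\rho_A$.} Injectivity of $i(\cdot)$ means the nonzero entries of distinct rows lie in distinct columns. Therefore $\bm{A}\bm{A}^\top=\operatorname{Diag}(\alpha_1^2,\ldots,\alpha_p^2)$ and $\rho_A=\max_j\alpha_j^2$. This matches the right-hand side of the second inequality.

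\textbf{Step 3: validity and compatibility with \eqref{eqn:quadpenalty1}.} Validity for \eqref{prob:lowrankrelaxation_orig} follows directly from Theorem \ref{prop:proj_cuts} under the exact lift, using the identities of Steps 1 and 2. To justify imposing the cut in \eqref{eqn:quadpenalty1}, I will reuse the lifting from the proof of Theorem \ref{prop:common-core-penalty}. Given $(\bm{X},\bm{Y},\bm{S}^i)$, build $\bm{W}$ with diagonal blocks $\bm{S}^i$ and off-diagonal blocks $\bm{X}_{i,\cdot}\bm{X}_{j,\cdot}^\top$. This $\bm{W}$ is feasible in \eqref{eqn:quadraticShorrelax_lowrank2}, and by Step 1 the cut \eqref{eqn:extendedvalidineq} evaluated at $\bm{W}$ coincides with \eqref{eqn:extendedvalidineq.compact}. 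The equivalence of Theorem \ref{prop:common-core-penalty} therefore persists with the cut added on both sides.

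\textbf{Main obstacle.} The only delicate point is the block-index bookkeeping in Step 1. One must confirm that the ordering of $\operatorname{vec}(\bm{X}^\top)$, with rows of $\bm{X}$ stacked, makes $\bm{D}=\bm{A}\otimes\bm{I}_m$ act blockwise as claimed. This is exactly the identity $\operatorname{vec}((\bm{AX})^\top)=\bm{D}\operatorname{vec}(\bm{X}^\top)$ already stated in Theorem \ref{prop:proj_cuts}.
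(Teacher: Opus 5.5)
Your proposal is correct and follows the paper's proof. Like the paper, you apply Theorem \ref{prop:proj_cuts} with $\bm{B}=\bm{I}_m$, use injectivity of $i(\cdot)$ to get $\rho_A=\max_j\alpha_j^2$, and identify the top-left block as $\sum_j\alpha_j^2\bm{S}^{i(j)}$. You compute that block for an arbitrary lifted $\bm{W}$ rather than only for the exact lift $\bm{X}^\top\bm{A}^\top\bm{A}\bm{X}$ as the paper does, which is a small bonus: it also shows that imposing the compact cut in \eqref{eqn:quadpenalty1} gives the same bound as imposing \eqref{eqn:extendedvalidineq} in \eqref{eqn:quadraticShorrelax_lowrank2}.
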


\proof{Proof of Corollary \ref{cor:proj_cuts}} The result follows from applying Theorem \ref{prop:proj_cuts} to $(\bm{A},\bm{I}_m)$. Indeed, in this case, 
$\bm{A}^\top \bm{A} = \sum_{j \in [p]} \alpha_j^2 \bm{e}_{i(j)}\bm{e}_{i(j)}^\top$ and the $i(j)$'s are distinct, so 
\begin{align*}
 \rho_A & = \lambda_{\max}(\bm{AA}^\top) = \lambda_{\max}(\bm{A}^\top \bm{A}) = \max_{j \in [p]} \alpha_j^2, \\
\mbox{ and } 
\sum_{a \in [p]} (\bm{D}\bm{W}_{x,x}\bm{D}^\top)^{(a,a)} & =  (\bm{AXB})^\top\bm{AXB} = \bm{X}^\top \bm{A}^\top \bm{A} \bm{X} = \sum_{j \in [p]} \alpha_j^2 \bm{X} \bm{e}_{i(j)}\bm{e}_{i(j)}^\top \bm{X}^\top = \sum_{j \in [p]} \alpha_j^2 \bm{S}^{i(j)}. \ \Halmos
\end{align*}
\endproof
}

{\color{black}
\subsection{Partial Failure of Lifted Shor Relaxation Without Projection Matrices}\label{ssec:bmfailure}
This subsection\footnote{We are grateful to Amir Ali Ahmadi for an insightful question which prompted this subsection.} justifies the use of a mixed-projection reformulation of Problem \eqref{prob:lowrankquadratic} to design our convex relaxations in Section \ref{sec:lowrankshorrelax}. In the presence of a hard rank constraint only ($\lambda = 0$), it is common to decompose $\bm{X}$ into 
$\bm{X}=\bm{UV^\top}$ 
with $\bm{U} \in \mathbb{R}^{n \times k}, \bm{V} \in \mathbb{R}^{m \times k}$ to enforce $\operatorname{rank}(\bm{X}) \leq k$ by design, as proposed in the literature \citep{burer2003nonlinear}. However, we now show that a naive first-order lift of this reformulation cannot provide a better bound than the trivial convex relaxation obtained by disregarding the rank constraint. 

To obtain a lifted relaxation, we vectorize $\bm{X}, \bm{U}, \bm{V}$ and let the matrix $\bm{W}$ collect the appropriate lifted blocks. Analogously to Proposition \ref{prop:convexrelax}, the corresponding lifted relaxation is then
\begin{equation}
\label{eq:uv-shor-relaxation}
\begin{aligned}
\min_{\substack{\bm{X} \in \mathbb{R}^{n \times m},\bm{U} \in \mathbb{R}^{n \times k}, \\ \bm{V} \in \mathbb{R}^{m \times k},\bm{W}\in \mathcal{S}_+^{n(m+k)+mk}}} \quad
& \langle \bm{H}, \bm{W}_{x,x} \rangle + \langle \bm{D}, \bm{X} \rangle \\
\text{s.t.} \quad
&
\begin{pmatrix}
1 & \mathrm{vec}(\bm{X}^\top) ^\top & \mathrm{vec}(\bm{U})^\top & \mathrm{vec}(\bm{V})^\top \\
\mathrm{vec}(\bm{X}^\top) & \bm{W}_{x,x} & \bm{W}_{x,u} & \bm{W}_{x,v} \\
\mathrm{vec}(\bm{U})  & \bm{W}_{x,u}^\top & \bm{W}_{u,u} & \bm{W}_{u,v} \\
\mathrm{vec}(\bm{V})  & \bm{W}_{x,v}^\top & \bm{W}_{u,v}^\top & \bm{W}_{v,v}
\end{pmatrix}
\succeq 0, \\
& \sum_{j=1}^k \bm{W}_{u,v}^{(j,j)} = \bm{X}, \\ 
& \langle \bm{Q}_i, \bm{W}_{x,x} \rangle + \langle \bm{E}_i, \bm{X} \rangle \le b_i,\ \forall i \in I,
\end{aligned}
\end{equation}
where $\bm{W}$ collects the lifted moment blocks and the constraint $\bm{X} = \sum_{j=1}^k \bm{W}_{u,v}^{(j,j)}$ encodes $\bm{X}=\bm{UV^\top}$. 

We now show that \eqref{eq:uv-shor-relaxation} is no stronger than simply dropping the rank constraint. 
Formally, we have the following result (proof deferred to Section \ref{sec:failureshor1}):
\begin{proposition}\label{prop:failureofshornaive} For any $k \ge 1$, Problem \eqref{eq:uv-shor-relaxation} attains the same optimal value as
\begin{equation}
\label{eq:rank-free-shor-relaxation}
\begin{aligned}
\min_{\bm{X}, \bm{W}_{x,x}} \quad
& \langle \bm{H}, \bm{W}_{x,x} \rangle + \langle \bm{D}, \bm{X} \rangle \\
\text{s.t.} \quad
&
\begin{pmatrix}
1 & \mathrm{vec}(\bm{X}^\top)^\top \\
\mathrm{vec}(\bm{X}^\top) & \bm{W}_{x,x}
\end{pmatrix}
\succeq 0, \\
& \langle \bm{Q}_i, \bm{W}_{x,x} \rangle + \langle \bm{E}_i, \bm{X} \rangle \le b_i,
\qquad \forall i \in I.
\end{aligned}
\end{equation}
\end{proposition}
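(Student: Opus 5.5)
The plan is to prove that each problem's optimal value bounds the other's. The direction ``\eqref{eq:uv-shor-relaxation} $\geq$ \eqref{eq:rank-free-shor-relaxation}'' is immediate. Any feasible point of \eqref{eq:uv-shor-relaxation} yields $(\bm{X},\bm{W}_{x,x})$ whose $(1+nm)$-dimensional leading principal submatrix is PSD. That point satisfies the same quadratic constraints and has the same objective, since neither involves $\bm{U},\bm{V}$ or their lifted blocks. The substantive direction is to take a feasible $(\bm{X},\bm{W}_{x,x})$ in \eqref{eq:rank-free-shor-relaxation} and complete it to a feasible point of \eqref{eq:uv-shor-relaxation} with the same objective value.

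For the completion, I would decouple the $(\bm{U},\bm{V})$ part from the $\bm{X}$ part entirely. Concretely, set $\bm{U}=\bm{0}$, $\bm{V}=\bm{0}$, $\bm{W}_{x,u}=\bm{0}$ and $\bm{W}_{x,v}=\bm{0}$. The big moment matrix is then block diagonal, with the first block being the already-PSD matrix $\begin{pmatrix}1 & \mathrm{vec}(\bm{X}^\top)^\top\\ \mathrm{vec}(\bm{X}^\top) & \bm{W}_{x,x}\end{pmatrix}$ and the second block being $\bm{G}:=\begin{pmatrix}\bm{W}_{u,u} & \bm{W}_{u,v}\\ \bm{W}_{u,v}^\top & \bm{W}_{v,v}\end{pmatrix}$. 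It therefore suffices to choose $\bm{G}\succeq\bm{0}$ so that the linking constraint $\sum_{j=1}^k \bm{W}_{u,v}^{(j,j)}=\bm{X}$ holds. Here $\bm{W}_{u,v}^{(j,j)}\in\mathbb{R}^{n\times m}$ is the cross block between the $j$th column of $\bm{U}$ and the $j$th column of $\bm{V}$, following the column-stacking convention for $\mathrm{vec}$.

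To build $\bm{G}$, set $\sigma:=\sigma_{\max}(\bm{X})=\|\bm{X}\|_2$. Put $\bm{W}_{u,v}^{(1,1)}:=\bm{X}$, $\bm{W}_{u,u}^{(1,1)}:=\sigma\bm{I}_n$ and $\bm{W}_{v,v}^{(1,1)}:=\sigma\bm{I}_m$, and set all other blocks to zero; this uses only $k\geq 1$. Up to a permutation, $\bm{G}$ is then the direct sum of $\begin{pmatrix}\sigma\bm{I}_n & \bm{X}\\ \bm{X}^\top & \sigma\bm{I}_m\end{pmatrix}$ and zeros. The nonzero part is PSD by the Schur complement lemma, because $\sigma^2\bm{I}_m\succeq\bm{X}^\top\bm{X}$ (and the case $\sigma=0$, i.e.\ $\bm{X}=\bm{0}$, is trivial). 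The linking constraint holds by construction. The objective and the constraints $\langle\bm{Q}_i,\bm{W}_{x,x}\rangle+\langle\bm{E}_i,\bm{X}\rangle\leq b_i$ are untouched, so the two optimal values coincide.

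The main obstacle is purely bookkeeping: making sure the block indexing of $\bm{W}_{u,v}$ under $\mathrm{vec}$ (column stacking of $\bm{U}\in\mathbb{R}^{n\times k}$ and $\bm{V}\in\mathbb{R}^{m\times k}$) is such that $\sum_j\bm{W}_{u,v}^{(j,j)}$ indeed represents $\bm{U}\bm{V}^\top=\sum_j\bm{U}_j\bm{V}_j^\top$. Once that is fixed, the construction above is forced and routine. The conceptual point to emphasize is that the relaxation never ties $\bm{W}_{u,v}$ to the first-order variables $\mathrm{vec}(\bm{U}),\mathrm{vec}(\bm{V})$ in a rank-revealing way, so a single diagonal block can absorb all of $\bm{X}$.
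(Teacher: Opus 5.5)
Your proof is correct and follows the paper's argument: the same principal-submatrix argument for one direction, and the same decoupled completion for the other, namely $\bm{U}=\bm{V}=\bm{0}$, zero cross blocks $\bm{W}_{x,u},\bm{W}_{x,v}$, and $\bm{W}_{u,v}^{(1,1)}=\bm{X}$ with all other blocks zero. The only difference is the diagonal fill. The paper uses $\bm{W}_{u,u}^{(j,j)}=\frac{1}{n}\bm{I}_n$ for every $j$ and $\bm{W}_{v,v}^{(1,1)}=n\bm{X}^\top\bm{X}$, which is a Gram factorization, so no Schur complement is needed; this choice also satisfies $\mathrm{tr}(\bm{W}_{u,u}^{(j,l)})=\delta_{j,l}$, which the paper exploits in a later remark, whereas your balanced $\sigma\bm{I}_n,\sigma\bm{I}_m$ choice does not, though it is equally valid for the statement as posed.
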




Proposition \ref{prop:failureofshornaive} shows that the first-order lifting of the factorized formulation $\bm{X} = \bm{U}\bm{V}^\top$ does not encode any nontrivial rank
information: once the rank-one condition on the moment matrix is dropped, the blocks involving $\bm{U}$ and $\bm{V}$ can always be completed so as to represent an
arbitrary matrix $\bm{X}$. In other words, Proposition \ref{prop:failureofshornaive} holds because there are no additional constraints on $\bm{U}, \bm{V}$. In contrast, our relaxation starts from the mixed-projection reformulation \eqref{prob:lowrankrelaxation_orig}, which relies on a structured decomposition of $\bm{X}$.

In particular, we can recover a non-trivial compact semidefinite relaxation from a decomposition $\bm{X}=\bm{UV}^\top$ if we further require that $\bm{U}^\top \bm{U}=\bm{I}_k$. In this case, we must have $\bm{X}^\top \bm{X}=\bm{VV^\top}$. These two relationships translate into additional constraints in the lifted semidefinite relaxation: $\mathrm{tr}(\bm{W}_{u,u}^{(j,l)})=\delta_{j,l} \ \forall j,l \in [k]$ and $\sum_{i \in [n]}\bm{W}_{x,x}^{(i,i)}=\sum_{j \in [k]}\bm{W}_{v,v}^{(j,j)}$, respectively. With these additional constraints, we recover our semidefinite relaxation \eqref{eqn:quadraticShorrelax_lowrank2} 
(proof deferred to Section \ref{ssec:shorequiv3}):
\begin{proposition}\label{prop:shorequiv2}
    Let $\lambda=0$. The following problem attains the same optimal value as \eqref{eqn:quadraticShorrelax_lowrank2}:
\begin{equation}
\label{eq:uv-shor-relaxation2}
\begin{aligned}
\min_{\substack{\bm{X} \in \mathbb{R}^{n \times m},\bm{U} \in \mathbb{R}^{n \times k}, \\ \bm{V} \in \mathbb{R}^{m \times k},\bm{W}\in \mathcal{S}_+^{n(m+k)+mk}}} \quad
& \langle \bm{H}, \bm{W}_{x,x} \rangle + \langle \bm{D}, \bm{X} \rangle \\
\text{s.t.} \quad
&
\begin{pmatrix}
1 & \mathrm{vec}(\bm{X}^\top) ^\top & \mathrm{vec}(\bm{U})^\top & \mathrm{vec}(\bm{V})^\top \\
\mathrm{vec}(\bm{X}^\top) & \bm{W}_{x,x} & \bm{W}_{x,u} & \bm{W}_{x,v} \\
\mathrm{vec}(\bm{U})  & \bm{W}_{x,u}^\top & \bm{W}_{u,u} & \bm{W}_{u,v} \\
\mathrm{vec}(\bm{V})  & \bm{W}_{x,v}^\top & \bm{W}_{u,v}^\top & \bm{W}_{v,v}
\end{pmatrix}
\succeq 0, \\
& \sum_{j=1}^k \bm{W}_{u,v}^{(j,j)} = \bm{X}, \mathrm{tr}(\bm{W}_{u,u}^{(j,l)})=\delta_{j,l} \ \forall j,l \in [k], \\
& \sum_{i \in [n]}\bm{W}_{x,x}^{(i,i)}=\sum_{j \in [k]}\bm{W}_{v,v}^{(j,j)},\\
& \langle \bm{Q}_i, \bm{W}_{x,x} \rangle + \langle \bm{E}_i, \bm{X} \rangle \le b_i,\ \forall i \in I.
\end{aligned}
\end{equation}
\end{proposition}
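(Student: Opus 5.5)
The plan is to prove the two inequalities between optimal values separately. As in Theorem~\ref{thm:shorlowrankequiv}, we map feasible points of one problem to feasible points of the other with no larger objective. Since $\lambda=0$ and both objectives equal $\langle \bm{H},\bm{W}_{x,x}\rangle+\langle \bm{D},\bm{X}\rangle$, it suffices to keep $(\bm{X},\bm{W}_{x,x})$ fixed and only build or discard the auxiliary blocks (and $\bm{Y}$). Throughout, $\mathrm{vec}(\bm{U})$ is partitioned into the columns $\bm{U}_j\in\mathbb{R}^n$ and $\mathrm{vec}(\bm{V})$ into $\bm{V}_j\in\mathbb{R}^m$. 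Hence $\bm{W}_{u,u}^{(j,l)}$, $\bm{W}_{u,v}^{(j,l)}$ and $\bm{W}_{v,v}^{(j,l)}$ are the lifted versions of $\bm{U}_j\bm{U}_l^\top$, $\bm{U}_j\bm{V}_l^\top$ and $\bm{V}_j\bm{V}_l^\top$.

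\emph{From \eqref{eq:uv-shor-relaxation2} to \eqref{eqn:quadraticShorrelax_lowrank2}.} The constraint $\bm{W}_{x,x}\succeq \mathrm{vec}(\bm{X}^\top)\mathrm{vec}(\bm{X}^\top)^\top$ follows from the Schur complement of the leading $(1+nm)$ principal submatrix. Next, sum over $j\in[k]$ the PSD principal submatrices indexed by $(\bm{U}_j,\bm{V}_j)$, exactly as in the first half of the proof of Theorem~\ref{thm:shorlowrankequiv}. Using $\sum_j\bm{W}_{u,v}^{(j,j)}=\bm{X}$ and $\sum_j \bm{W}_{v,v}^{(j,j)}=\sum_i\bm{W}_{x,x}^{(i,i)}=:\bm{\Theta}$, this gives
\begin{align*}
\begin{pmatrix}\bm{\Theta} & \bm{X}^\top\\ \bm{X} & \tilde{\bm{Y}}\end{pmatrix}\succeq \bm{0},\qquad \tilde{\bm{Y}}:=\sum_{j\in[k]}\bm{W}_{u,u}^{(j,j)},\qquad \mathrm{tr}(\tilde{\bm{Y}})=k .
\end{align*}
We then set $\bm{Y}:=\bm{X}\bm{\Theta}^\dagger\bm{X}^\top\preceq\tilde{\bm{Y}}$, which is allowed because $\lambda=0$. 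This immediately yields $\bm{Y}\succeq\bm{0}$ and $\mathrm{tr}(\bm{Y})\le k$. The remaining condition $\bm{Y}\preceq\bm{I}$ is the step I expect to be the main obstacle. It does \emph{not} follow from $\tilde{\bm{Y}}$: the partial-trace condition $\mathrm{tr}(\bm{W}_{u,u}^{(j,l)})=\delta_{j,l}$ allows $\tilde{\bm{Y}}$ to have eigenvalue $k$.

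To handle it, I would pass to a Gram representation of the $(u,v)$ block, with vectors $\bm{g}_{a,j}$ for the entries $U_{a,j}$ and $\bm{h}_{b,j}$ for $V_{b,j}$. For unit $\bm{x}$ and any $\bm{y}$, this gives $\bm{x}^\top\bm{X}\bm{y}=\sum_j\langle \bm{p}_j,\bm{q}_j\rangle$ with $\sum_j\|\bm{q}_j\|^2=\bm{y}^\top\bm{\Theta}\bm{y}$. The goal is then $(\bm{x}^\top\bm{X}\bm{y})^2\le \bm{y}^\top\bm{\Theta}\bm{y}$, which is equivalent to $\bm{Y}\preceq \bm{I}$. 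I would try to obtain it by exploiting the off-diagonal trace constraints ($j\neq l$) together with the cross blocks $\bm{W}_{x,u}$ and $\bm{W}_{x,v}$. If this fails, the fallback is to show that $\bm{Y}$ can be replaced by a matrix in $\mathrm{Conv}(\mathcal{Y}^k_n)$ dominating $\bm{X}\bm{\Theta}^\dagger\bm{X}^\top$, using the Overton--Womersley characterization.

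\emph{From \eqref{eqn:quadraticShorrelax_lowrank2} to \eqref{eq:uv-shor-relaxation2}.} Given $(\bm{X},\bm{Y},\bm{W}_{x,x})$, I would first enlarge $\bm{Y}$ so that $\mathrm{tr}(\bm{Y})=k$ while keeping $\bm{Y}\preceq\bm{I}$; this is harmless since $\lambda=0$ and $k\le n$. I would then write $\bm{Y}=\sum_s\pi_s\bm{U}_s\bm{U}_s^\top$ as a convex combination of rank-$k$ projectors with $\bm{U}_s^\top\bm{U}_s=\bm{I}_k$. Next I would build the moment matrix as that of a random triple $(\bm{X}',\bm{U},\bm{V})$. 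Here $\bm{X}'$ has mean $\bm{X}$ and second moment $\bm{W}_{x,x}$. Independently, $\bm{U}=\bm{U}_s$ with probability $\pi_s$. Finally, $\bm{V}$ is a linear function of $(\bm{X}',\bm{U})$, with first attempt $\bm{V}:=\bm{X}'^\top\bm{Y}^\dagger\bm{U}$. Positive semidefiniteness of the resulting moment matrix is then automatic. Also $\mathrm{tr}(\bm{W}_{u,u}^{(j,l)})=\mathbb{E}[\bm{U}_j^\top\bm{U}_l]=\delta_{j,l}$, and $\sum_j\bm{W}_{u,v}^{(j,j)}=\bm{Y}\bm{Y}^\dagger\bm{X}=\bm{X}$, since the Schur complement gives $\mathrm{range}(\bm{X})\subseteq\mathrm{range}(\bm{Y})$.

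The delicate point in this direction is the equality $\sum_j\bm{W}_{v,v}^{(j,j)}=\bm{\Theta}$. The naive choice gives $\sum_{a,b}(\bm{Y}^\dagger)_{ab}\bm{W}_{x,x}^{(a,b)}$ instead. I would repair this by adding to $\bm{V}$ an independent zero-mean component that tops up the deficit, whose sign must be checked using the PSD block constraint of \eqref{eqn:quadraticShorrelax_lowrank2}. Alternatively, I would choose the linear map defining $\bm{V}$ so that it acts block-diagonally across rows of $\bm{X}'$.
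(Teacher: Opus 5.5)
Both directions of your proposal have a genuine gap.

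\textbf{From \eqref{eq:uv-shor-relaxation2} to \eqref{eqn:quadraticShorrelax_lowrank2}.} Everything up to $\bm{Y}:=\bm{X}\bm{\Theta}^\dagger\bm{X}^\top\preceq\tilde{\bm{Y}}$ is fine. However, you leave $\bm{Y}\preceq\bm{I}$ open, and neither of your suggested routes closes it. The Gram-vector argument is not carried out, and the off-diagonal trace constraints and the blocks $\bm{W}_{x,u},\bm{W}_{x,v}$ are not needed. The fallback is circular: every element of $\mathrm{Conv}(\mathcal{Y}^k_n)$ is $\preceq\bm{I}$, so exhibiting one that dominates $\bm{X}\bm{\Theta}^\dagger\bm{X}^\top$ already requires $\bm{X}\bm{\Theta}^\dagger\bm{X}^\top\preceq\bm{I}$.

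The missing observation uses only what you already derived. Summing the diagonal $m\times m$ blocks of $\bm{W}_{x,x}\succeq\mathrm{vec}(\bm{X}^\top)\mathrm{vec}(\bm{X}^\top)^\top$ gives $\bm{\Theta}=\sum_i\bm{W}_{x,x}^{(i,i)}\succeq\sum_i\bm{X}_{i,.}\bm{X}_{i,.}^\top=\bm{X}^\top\bm{X}$. Your target inequality then follows from Cauchy--Schwarz: $(\bm{x}^\top\bm{X}\bm{y})^2\le\|\bm{X}\bm{y}\|^2\le\bm{y}^\top\bm{\Theta}\bm{y}$ for unit $\bm{x}$. Equivalently, with $\bm{R}:=\bm{X}\bm{\Theta}^{\dagger/2}$ one has $\bm{R}^\top\bm{R}\preceq\bm{\Theta}^{\dagger/2}\bm{\Theta}\bm{\Theta}^{\dagger/2}\preceq\bm{I}$, so $\bm{Y}=\bm{R}\bm{R}^\top\preceq\bm{I}$; this is the paper's argument. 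The coupling constraint $\sum_i\bm{W}_{x,x}^{(i,i)}=\sum_j\bm{W}_{v,v}^{(j,j)}$ is exactly what lets the same $\bm{\Theta}$ serve both in $\bm{Y}\preceq\tilde{\bm{Y}}$ and in $\bm{\Theta}\succeq\bm{X}^\top\bm{X}$.

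\textbf{From \eqref{eqn:quadraticShorrelax_lowrank2} to \eqref{eq:uv-shor-relaxation2}.} Here your construction fails, and the proposed repair cannot rescue it. With $\bm{V}=\bm{X}'^\top\bm{Y}^\dagger\bm{U}$ you get $\sum_j\bm{W}_{v,v}^{(j,j)}=\mathbb{E}[\bm{X}'^\top\bm{Y}^\dagger\bm{X}']$. Since $\bm{Y}^\dagger$ has eigenvalues $\ge 1$ on $\mathrm{range}(\bm{Y})$, this can \emph{exceed} $\bm{\Theta}=\mathbb{E}[\bm{X}'^\top\bm{X}']$. For instance, take $n=2$, $m=k=1$, $\mathcal{I}=\emptyset$, $\bm{X}=\bm{0}$, $\bm{W}_{x,x}=\bm{I}_2$, $\bm{Y}=\frac12\bm{I}_2$. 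This point is feasible in \eqref{eqn:quadraticShorrelax_lowrank2} with $\bm{\Theta}=2$ and $\mathrm{tr}(\bm{Y})=k$, yet $\mathbb{E}[\bm{V}\bm{V}^\top]=4$. Adding independent zero-mean noise to $\bm{V}$ can only increase $\mathbb{E}[\bm{V}\bm{V}^\top]$. The block constraint of \eqref{eqn:quadraticShorrelax_lowrank2} controls only $\bm{X}^\top\bm{Y}^\dagger\bm{X}$, not the fluctuation term $\mathbb{E}[(\bm{X}'-\bm{X})^\top\bm{Y}^\dagger(\bm{X}'-\bm{X})]$.

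The fix is to notice that nothing forces $(\bm{U},\bm{V})$ to be correlated with $\bm{X}'$. Take them independent of $\bm{X}'$, and set $\bm{V}:=\bm{X}^\top\bm{Y}^\dagger\bm{U}+\bm{N}$ using the deterministic $\bm{X}$. Here $\bm{N}$ is independent and zero-mean with $\mathbb{E}[\bm{N}\bm{N}^\top]=\bm{\Theta}-\bm{X}^\top\bm{Y}^\dagger\bm{X}\succeq\bm{0}$, which holds by the generalized Schur complement of the $(n+m)$-dimensional block, also after your enlargement of $\bm{Y}$. Then $\mathbb{E}[\bm{U}\bm{V}^\top]=\bm{Y}\bm{Y}^\dagger\bm{X}=\bm{X}$ and $\mathbb{E}[\bm{V}\bm{V}^\top]=\bm{\Theta}$.

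The paper is even more direct, with no Overton--Womersley decomposition and no enlargement of $\bm{Y}$. It sets $\bm{U}=\bm{V}=\bm{0}$ and all cross blocks and off-diagonal factor blocks to zero. Every diagonal factor block is set equal to
\[
\frac{1}{k}\begin{pmatrix}\bm{Y}&\bm{X}\\ \bm{X}^\top&\bm{\Theta}\end{pmatrix}+\begin{pmatrix}\tau\bm{I}_n&\bm{0}\\ \bm{0}&\bm{0}\end{pmatrix},\qquad \tau:=\frac{k-\mathrm{tr}(\bm{Y})}{kn}\ge 0,
\]
which immediately gives unit traces, $\sum_j\bm{W}_{u,v}^{(j,j)}=\bm{X}$ and $\sum_j\bm{W}_{v,v}^{(j,j)}=\bm{\Theta}$.
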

The proof of Proposition \ref{prop:shorequiv2} reveals that, upon imposing $\bm{X}=\bm{UV^\top}, \bm{U}^\top \bm{U}=\bm{I}_k, \bm{X^\top X}=\bm{VV^\top}$, the block matrix $\sum_{j \in [k]}\bm{W}_{u,u}^{(j,j)}$ dominates the relaxed projection variable $\bm{Y}=\bm{X} \bm{\Theta}^\dagger \bm{X}^\top: \bm{\Theta}=\sum_{j \in [k]}\bm{W}_{v,v}^{(j,j)}$. All in all, a lifted relaxation of a \cite{burer2003nonlinear} factorization 
recovers relaxation \eqref{eqn:quadraticShorrelax_lowrank2} in extended form, provided additional structure on the factors $\bm{U},\bm{V}$ is imposed.  

\begin{remark} One can show that Proposition \ref{prop:failureofshornaive} holds even when adding the constraints $\mathrm{tr}(\bm{W}_{u,u}^{(j,l)})=\delta_{j,l} \ \forall j,l \in [k]$. To obtain a non-trivial relaxation in Proposition \ref{prop:shorequiv2}, it is crucial to observe that $\bm{U}^\top \bm{U}=\bm{I}_k$ implies  $\bm{X}^\top \bm{X}=\bm{VV^\top}$ and to use the latter constraints to relate the diagonal blocks of $\bm{W}_{x,x}$ (which appears in the objective) with those of $\bm{W}_{v,v}$.
\end{remark}
}

\section{Examples of Low-Rank Relaxations}\label{sec:examplesofrelaxations}
This section applies the lifted relaxation proposed in Section \ref{sec:lowrankshorrelax} to two important problems from the low-rank literature: (i) matrix completion (Section \ref{ssec:ex.mc}), which possesses a row-separable quadratic loss, and (ii) reduced rank regression (Section \ref{ssec:ex.RRR}), which possesses an objective dependent explicitly on $\bm{XX^\top}$. By exploiting {\color{black}their respective} problem structure, 
we show that our lifted relaxation can be further reduced to an equivalent relaxation with {\color{black}no semidefinite matrices or constraints of dimension larger than the sum of the two dimensions of the low-rank matrix}, as opposed to the $O((n+m)^2)$ dimension blocks in our lifted relaxations from the previous section. 

Section \ref{append.additionalexamples} supports this discussion by applying our lifted relaxation technique to three additional families of low-rank problems: basis pursuit, sparse plus low-rank matrix decomposition, and low-rank factor analysis. 

\subsection{Matrix Completion} \label{ssec:ex.mc}

Given a random sample $\left\{A_{i,j}: (i,j) \in \Omega \subseteq [n] \times [m]\right\}$ of a matrix $\bm{A} \in \mathbb{R}^{n \times m}$, the goal of the low-rank matrix completion problem is to reconstruct the matrix $\bm{A}$, by assuming it is approximately low-rank \citep{candes2009exact}. This problem admits the formulation:  
\begin{align}\label{prob:lrmc_orig}
    \min_{\bm{Y} \in \mathcal{Y}_n^k }\min_{\bm{X}\in \mathbb{R}^{n \times m}} \quad & \Vert \mathcal{P}(\bm{A})-\mathcal{P}(\bm{X})\Vert_F^2+\lambda \cdot \mathrm{tr}(\bm{Y}) \quad \text{s.t.} \quad \bm{X}=\bm{Y}\bm{X},
\end{align}
where $\lambda\geq 0$ is a penalty multiplier on the rank of $\bm{X}$ through the trace of $\bm{Y}$, $k$ is a hard constraint on the rank of $\bm{X}$ through the trace of $\bm{Y}$, and $$\mathcal{P}(\bm{A})_{i,j}=\begin{cases}A_{i,j} \quad & \text{if} \ (i,j) \in \Omega \\ 0 \quad & \text{otherwise}\end{cases}$$ is a linear map which masks the hidden entries of $\bm{A}$ by outputting a matrix equal to $\bm{A}$ on $\Omega$ and $0$ otherwise. By expanding the quadratic $\Vert \mathcal{P}(\bm{A})-\mathcal{P}(\bm{X})\Vert_F^2=\sum_{i \in [n]}\left(\sum_{j \in [m]: (i,j) \in \Omega}(X_{i,j}-A_{i,j})^2\right)$, we can invoke {\color{black}Theorem \ref{prop:common-core-penalty}} to obtain the following relaxation of \eqref{prob:lrmc_orig}
\begin{equation}\label{prob:lowrankrelaxation_reduced}
\begin{aligned}
    \min_{\bm{Y} \in \mathrm{Conv}(\mathcal{Y}_n^k) } \min_{\bm{X}\in \mathbb{R}^{n \times m}, \bm{S}^i \in \mathcal{S}^{m}_+} \quad & \sum_{i \in [n]}\langle \bm{S}^{i}, \bm{H}^i\rangle-2\left\langle \mathcal{P}(\bm{X}), \mathcal{P}(\bm{A})\right\rangle
    +\left\langle \mathcal{P}(\bm{A}), \mathcal{P}(\bm{A})\right\rangle
    +\lambda \cdot \mathrm{tr}(\bm{Y}) \\
    \text{s.t.} \quad & \bm{S}^i\succeq \bm{X}_{i,.} \bm{X}_{i,.}^\top, \ 
    \begin{pmatrix} \sum_{i \in [n]}\bm{S}^{i} & \bm{X}^\top \\ \bm{X} & \bm{Y}\end{pmatrix} \succeq \bm{0}.
\end{aligned}
\end{equation}
where $\bm{H}^{i}$ is a diagonal matrix which takes entries $\bm{H}^i_{j,j}=1$ if $(i,j) \in \Omega$ and $\bm{H}^i_{j,j}=0$ otherwise, 
and we retain the constant term $\langle \mathcal{P}(\bm{A}), \mathcal{P}(\bm{A})\rangle$ to ensure that all relaxations are directly comparable.
Compared with the matrix perspective relaxation of \citet{bertsimas2021new}, our relaxation is directly applicable to \eqref{prob:lrmc_orig}, while \citet{bertsimas2021new} requires the presence of an additional Frobenius regularization term $+\frac{1}{2\gamma}\Vert \bm{X}\Vert_F^2$ in the objective.

{\color{black} Because each matrix $\bm{H}^i$ in Problem \eqref{prob:lowrankrelaxation_reduced} is binary, we can eliminate the individual matrices $\bm{S}^i$ and keep only one additional semidefinite variable $\bm{\Theta} \in \mathcal{S}^m_+$ to encode their sum. Formally, we have the following result (proof deferred to Section \ref{ssec:redmc2prop}):
\begin{proposition}\label{prop:reducedmatrixcompletionii}
Define $\bar{\bm{H}}:=\operatorname{Diag}(\bar{\bm{h}})$ where $\bar h_j:=\min_{i\in[n]} H^i_{j,j}$. 
Then, Problem \eqref{prob:lowrankrelaxation_reduced} attains the same optimal value as the following optimization problem:
\begin{equation}\label{eqn:lmrc_further_reduced}
\begin{aligned}
\min_{\bm{Y}\in\mathrm{Conv}(\mathcal{Y}_{n}^k)}
\min_{\bm{X}\in\mathbb{R}^{n\times m},  \bm{\Theta}\in\mathcal{S}^m_{+}}
\quad&
\langle \bar{\bm{H}},\bm{\Theta}\rangle
+
\sum_{i\in[n]}\langle \bm{H}^i-\bar{\bm{H}},\bm{X}_{i,.}\bm{X}_{i,.}^\top\rangle
-2\langle \mathcal{P}(\bm{X}),\mathcal{P}(\bm{A})\rangle+\langle \mathcal{P}(\bm{A}),\mathcal{P}(\bm{A})\rangle
+\lambda\,\operatorname{tr}(\bm{Y})\\
\textnormal{s.t.}\quad&
\begin{pmatrix}
\bm{\Theta} & \bm{X}^\top\\
\bm{X} & \bm{Y}
\end{pmatrix}\succeq \bm{0}.
\end{aligned}
\end{equation}
\end{proposition}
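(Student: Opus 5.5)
We prove the two optimal values equal by mapping feasible points of each problem to feasible points of the other without increasing the objective. Write $\bm{\Theta}$ for the upper-left block of the $(n+m)$-dimensional semidefinite constraint. Both problems share the terms $-2\langle \mathcal{P}(\bm{X}),\mathcal{P}(\bm{A})\rangle+\langle \mathcal{P}(\bm{A}),\mathcal{P}(\bm{A})\rangle+\lambda\operatorname{tr}(\bm{Y})$ and the constraint on $\bm{Y}$, so only the quadratic part needs comparison. The key observation is that $\bm{H}^i-\bar{\bm{H}}$ is diagonal with entries in $\{0,1\}$, since $\bar h_j=1$ exactly when $H^i_{j,j}=1$ for every $i$. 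In particular $\bm{H}^i-\bar{\bm{H}}\succeq\bm{0}$ and $\bar{\bm{H}}\succeq \bm{0}$.

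\emph{From \eqref{prob:lowrankrelaxation_reduced} to \eqref{eqn:lmrc_further_reduced}.} Given a feasible $(\bm{X},\bm{Y},\bm{S}^1,\ldots,\bm{S}^n)$, set $\bm{\Theta}:=\sum_i\bm{S}^i$; this is PSD, and the block constraint is unchanged. The objective then splits as
\[
\sum_i\langle \bm{S}^i,\bm{H}^i\rangle=\langle\bar{\bm{H}},\bm{\Theta}\rangle+\sum_i\langle \bm{H}^i-\bar{\bm{H}},\bm{S}^i\rangle\ge \langle\bar{\bm{H}},\bm{\Theta}\rangle+\sum_i\langle \bm{H}^i-\bar{\bm{H}},\bm{X}_{i,.}\bm{X}_{i,.}^\top\rangle .
\]
The inequality holds because $\bm{S}^i\succeq\bm{X}_{i,.}\bm{X}_{i,.}^\top$ and $\bm{H}^i-\bar{\bm{H}}\succeq\bm{0}$. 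So \eqref{eqn:lmrc_further_reduced} is no larger.

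\emph{From \eqref{eqn:lmrc_further_reduced} to \eqref{prob:lowrankrelaxation_reduced}.} This is the main step: given $(\bm{X},\bm{Y},\bm{\Theta})$, we must split $\bm{\Theta}$ into PSD pieces $\bm{S}^i\succeq\bm{X}_{i,.}\bm{X}_{i,.}^\top$ summing to $\bm{\Theta}$, while keeping $\langle\bm{H}^i-\bar{\bm{H}},\bm{S}^i\rangle=\langle\bm{H}^i-\bar{\bm{H}},\bm{X}_{i,.}\bm{X}_{i,.}^\top\rangle$.

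First, $\bm{\Theta}\succeq\bm{X}^\top\bm{X}$. To see this, the block constraint gives $\bm{\Theta}\succeq\bm{X}^\top\bm{Y}^\dagger\bm{X}$ with $\bm{X}$ in the range of $\bm{Y}$, and $\bm{Y}\preceq\bm{I}$ implies $\bm{Y}^\dagger\succeq\bm{I}$ on that range. Hence $\bm{\Delta}:=\bm{\Theta}-\bm{X}^\top\bm{X}=\bm{\Theta}-\sum_i\bm{X}_{i,.}\bm{X}_{i,.}^\top\succeq\bm{0}$.

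Now give the excess to a single index: $\bm{S}^i:=\bm{X}_{i,.}\bm{X}_{i,.}^\top+t_i\bm{\Delta}$ with $t_i\ge0$ and $\sum_i t_i=1$. Then $\sum_i\bm{S}^i=\bm{\Theta}$, each $\bm{S}^i$ is PSD, and the block constraint holds with upper-left block $\bm{\Theta}$.

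The objective of \eqref{prob:lowrankrelaxation_reduced} becomes
\[
\langle\bar{\bm{H}},\bm{\Theta}\rangle+\sum_i\langle\bm{H}^i-\bar{\bm{H}},\bm{X}_{i,.}\bm{X}_{i,.}^\top\rangle+\sum_i t_i\langle\bm{H}^i-\bar{\bm{H}},\bm{\Delta}\rangle .
\]
The last term must vanish. It does if we pick $t_{i^*}=1$ for an index $i^*$ with $\bm{H}^{i^*}=\bar{\bm{H}}$. Such an index need not exist, since $\bar{\bm{H}}$ is a coordinatewise minimum.

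The fallback is to split $\bm{\Delta}$ column-wise rather than by a scalar weight. The $i$th term above is a sum of $\Delta_{jj}$ over columns $j$ with $H^i_{jj}=1$ and $\bar h_j=0$. For each such column, $\bar h_j=0$ means some row $i(j)$ has $H^{i(j)}_{jj}=0$, and we want to assign that column's mass of $\bm{\Delta}$ to $\bm{S}^{i(j)}$. The difficulty is that distributing a PSD matrix across diagonal supports while keeping each piece PSD is not automatic when off-diagonal entries couple columns assigned to different rows.

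I would therefore first test small cases to see whether the equality indeed needs such an index, or whether $\bar{\bm{H}}$ should be read as making the last term vanish. The first thing to check is the natural normalization: if some row $i$ is fully observed or fully unobserved appropriately, the choice $t_{i^*}=1$ works directly. The general case is the step I expect to be the real obstacle.
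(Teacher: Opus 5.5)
Your first direction is correct, and so is your derivation of $\bm{\Theta}\succeq\bm{X}^\top\bm{X}$. The reverse direction has a genuine gap, and the obstacle you hit is real rather than technical. As long as you insist on $\sum_i\bm{S}^i=\bm{\Theta}$ exactly, the target cost $\langle\bar{\bm{H}},\bm{\Delta}\rangle$ on the excess is in general not achievable. Take $n=m=2$, $\bm{H}^1=\operatorname{Diag}(1,0)$ and $\bm{H}^2=\operatorname{Diag}(0,1)$, so that $\bar{\bm{H}}=\bm{0}$. Take the feasible point $\bm{X}=\bm{0}$, $\bm{Y}=\bm{0}$, with $\bm{\Theta}=\bm{\Delta}=\bm{1}\bm{1}^\top$ the all-ones matrix. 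Suppose $\bm{U}^1+\bm{U}^2=\bm{\Delta}$ with $\bm{U}^i\succeq\bm{0}$. Then $U^1_{22},U^2_{11}\le 1$, hence $(U^1_{12})^2\le U^1_{11}$ and $(U^2_{12})^2\le U^2_{22}$. The cost is therefore $U^1_{11}+U^2_{22}\ge (U^1_{12})^2+(U^2_{12})^2\ge \tfrac12$. By contrast, \eqref{eqn:lmrc_further_reduced} assigns this point quadratic part $0$. So neither your scalar-weight split nor any column-wise split that sums exactly to $\bm{\Delta}$ can close the argument.

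The missing idea is that \eqref{prob:lowrankrelaxation_reduced} only requires $\sum_i\bm{S}^i\succeq\bm{\Theta}$, because the block constraint is monotone in its upper-left block. You may therefore overshoot $\bm{\Delta}$ on coordinates where overshooting is free. In the example, $\bm{U}^1=\operatorname{Diag}(0,2)$ and $\bm{U}^2=\operatorname{Diag}(2,0)$ cost $0$ and satisfy $\bm{U}^1+\bm{U}^2\succeq\bm{\Delta}$.

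In general, let $\bar S:=\{j:\bar h_j=1\}$. Partition $[m]\setminus\bar S$ into sets $T_1,\ldots,T_n$ with $H^i_{jj}=0$ for all $j\in T_i$; this is possible by the definition of $\bar{\bm{H}}$. Let $\bm{P}_{T_i}$ be the diagonal selector of $T_i$, and set $\bm{K}_i:=n^{-1/2}\bar{\bm{H}}+n^{1/2}\bm{P}_{T_i}$ and $\bm{S}^i:=\bm{X}_{i,.}\bm{X}_{i,.}^\top+\bm{K}_i\bm{\Delta}\bm{K}_i$.
\begin{itemize}
\item Since $\sum_i n^{-1/2}\bm{K}_i=\bm{I}_m$, Cauchy--Schwarz gives $\bm{\Delta}\preceq\sum_i\bm{K}_i\bm{\Delta}\bm{K}_i$. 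Hence $\sum_i\bm{S}^i\succeq\bm{\Theta}$, and the block constraint holds.
\item Since $\bm{K}_i\bm{H}^i\bm{K}_i=n^{-1}\bar{\bm{H}}$, you get $\sum_i\langle\bm{H}^i,\bm{S}^i\rangle=\sum_i\langle\bm{H}^i,\bm{X}_{i,.}\bm{X}_{i,.}^\top\rangle+\langle\bar{\bm{H}},\bm{\Delta}\rangle$. This is exactly the quadratic part of \eqref{eqn:lmrc_further_reduced}.
\end{itemize}
This completes your primal argument with an explicit construction.

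The paper reaches the same conclusion without any explicit split. It fixes $(\bm{X},\bm{Y})$ and minimizes over $\bm{U}^i\succeq\bm{0}$ subject to $\sum_i\bm{U}^i\succeq\bm{X}^\top\bm{Y}^\dagger\bm{X}-\bm{X}^\top\bm{X}$, which is again an inequality. It then dualizes. The dual feasible set $\{\bm{0}\preceq\bm{M}\preceq\bm{H}^i\ \forall i\}$ collapses to $\{\bm{0}\preceq\bm{M}\preceq\bar{\bm{H}}\}$, because $\bm{M}\preceq\bm{H}^i$ forces $\bm{M}$ to vanish on the zero coordinates of each $\bm{H}^i$.
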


We point out that if $\bar{\bm H} = \bm{0}$, relaxation \eqref{eqn:lmrc_further_reduced} is weak. Indeed, in this case, 
any completion that satisfies $\mathcal{P}(\bm{X}-\bm{A})=\bm{0}, \bm{Y}=\epsilon \bm{I}, \bm{\Theta}=\epsilon^{-1} \bm{X}^\top \bm{X}$ is feasible and attains objective value $0$ as $\epsilon \rightarrow 0$. This situation ($\bar{\bm H} = \bm{0}$) occurs if each column is missing for at least one row (meaning $\min_{i \in [n]} H_{j,j}^i=0$).
In other words, to obtain nontrivial relaxations via the compact unregularized relaxation \eqref{eqn:lmrc_further_reduced}, there must exist at least one column $j$ which is observed in every row. When $\bm{X}$ represents a dataset of $n$ $p$-dimensional observations, this means that there must exist at least one variable which is consistently available for all observations.  
For example, \citet{goldberg2001eigentaste} consider a collaborative filtering setting where all users are provided with universal queries before they begin rating other items and \citet{athey2021matrix} consider a panel data setting possibly involving complete rows for control units where the goal is to infer missing elements of a control outcome matrix. 

On the other hand, to obtain non-trivial relaxations when $\bar{\bm H} = \bm{0}$, one must either add a Frobenius regularization term to the objective or strengthen the relaxation via projection cuts (or both).
}

With the additional regularization term $+\frac{1}{2\gamma}\Vert \bm{X}\Vert_F^2$ in the objective, our approach leads to relaxations of the form \eqref{prob:lowrankrelaxation_reduced} after redefining $\bm{H}^i \leftarrow \bm{H}^i + \frac{1}{2\gamma}\bm{I}_m$. Per Proposition \ref{prop:mprt_is_weaker}, our resulting relaxations are at least as strong as the relaxation of \citet{bertsimas2021new}. {\color{black}However, the matrices $\bm{H}^i$ are no longer binary and the compact reformulation in Proposition \ref{prop:reducedmatrixcompletionii} no longer holds.}

{\color{black}
Applying Corollary \ref{cor:proj_cuts}, 
Problem \eqref{prob:lowrankrelaxation_reduced} can be strengthened by imposing {projection cuts}. Let $\mathcal{R}=\{r_1,\ldots,r_{|R|}\}$ be a nonempty subset of the rows of $\bm{X}$ of cardinality $\vert \mathcal{R}\vert \geq k+1$, let  $\bm{E}_{\mathcal{R}} \in \{0, 1\}^{\vert \mathcal{R}\vert \times n}$ denote the corresponding row-selection matrix, and define $\bm{X}_{\mathcal{R}}:=\bm{E}_{\mathcal{R}}\bm{X} \in \mathbb{R}^{\vert \mathcal{R}\vert \times m}$. Then, since $\operatorname{rank}(\bm{X}_{\mathcal{R}})\leq \operatorname{rank}(\bm{X})$ and $\bm{E}_{\mathcal{R}} \bm{E}_{\mathcal{R}}^\top=\bm{I}_{|R|}$, we have the following auxiliary valid inequalities for Problem \eqref{prob:lowrankrelaxation_reduced}:
\begin{align}\label{eqn:projcut_matcomp}
\exists \bm{Z}_{\mathcal{R}}\in \operatorname{Conv}(\mathcal Y_{|R|}^k)
\ \textnormal{s.t.}\
\begin{pmatrix}
\sum_{i\in \mathcal{R}}\bm{S}^i &\bm{X}_\mathcal{R}^\top\\
\bm{X}_\mathcal{R} & \bm{Z}_\mathcal{R}
\end{pmatrix}\succeq \bm{0}, \bm{E}_{\mathcal{R}}\bm{Y}{\bm{E}_\mathcal{R}}^\top \preceq \bm{Z}_\mathcal{R}.
\end{align}
We require that $\vert \mathcal{R}\vert > k$ in order that one cannot take $\bm{Z}_\mathcal{R}=\bm{I}$, rendering the cut trivial.
}

\subsection{Reduced Rank Regression}\label{ssec:ex.RRR}
Given a response matrix $\bm{B} \in \mathbb{R}^{n\times m}$ and a predictor matrix $\bm{A}\in \mathbb{R}^{\color{black}n\times p}$, an important problem in high-dimensional statistics is to recover a low-complexity model which relates the matrices $\bm{B}$ and $\bm{A}$. A popular choice for doing so is to assume that $\bm{B}, \bm{A}$ are related via $\bm{B}=\bm{A}\bm{X}+\bm{E}$, where $\bm{X} \in \mathbb{R}^{p \times {\color{black}m}}$ is a coefficient matrix, $\bm{E}$ is a matrix of noise, and we require that the rank of $\bm{X}$ is small so that the linear model is parsimonious \citep{negahban2011estimation}. This yields:
\begin{align}\label{eqn:rrr_orig}
    \min_{\bm{X} \in \mathbb{R}^{p \times {\color{black}m}}} \quad \Vert \bm{B}-\bm{A}\bm{X}\Vert_F^2+\mu \cdot \mathrm{rank}( \bm{X}),
\end{align}
where $\mu >0$ controls the complexity of the estimator. 
For this problem, our lifted relaxation \eqref{eqn:quadraticShorrelax_lowrank2} is equivalent to the (improved) matrix perspective relaxation of \citet{bertsimas2021new}.

Indeed, by invoking Theorem \ref{thm:shorlowrankequiv}--on $\mathrm{vec}(\bm{X})\mathrm{vec}(\bm{X})^\top$ rather than the mathematically equivalent $\mathrm{vec}(\bm{X}^\top)\mathrm{vec}(\bm{X}^\top)^\top$ for notational convenience--we obtain \eqref{eqn:rrr_orig}'s lifted relaxation
\begin{equation}\label{prob:shorrelaxation_rrr}
\begin{aligned}
    \min_{\bm{Y} \in \mathrm{Conv}(\color{black}\mathcal{Y}_m) } \min_{\bm{X}\in \mathbb{R}^{p \times m}, \bm{W} \in \mathcal{S}^{pm}_+} \quad & \left\langle \bm{A}^\top \bm{A}, \sum_{\color{black}i \in [m]}\bm{W}^{(i,i)}\right\rangle+\langle \bm{B}, \bm{B}\rangle-2\langle \bm{AX}, \bm{B}\rangle+\mu\cdot \mathrm{tr}(\bm{Y}) \\
    \text{s.t.} \quad & \bm{W}\succeq \mathrm{vec}(\bm{X})\mathrm{vec}(\bm{X})^\top, \ \begin{pmatrix} \sum_{\color{black}i \in [m]}\bm{W}^{(i,i)} & \bm{X} \\ \bm{X}\color{black}^\top & \bm{Y}\end{pmatrix} \succeq \bm{0},
\end{aligned}
\end{equation}
for which we {\color{black}have the following corollary to Theorem \ref{prop:common-core-penalty}}: 
\begin{corollary}\label{corr:rrr}
Problem \eqref{prob:shorrelaxation_rrr} attains the same objective value as
\begin{equation}\label{prob:shorrelaxation_rrr_compact}
\begin{aligned}
    \min_{\bm{Y} \in \mathrm{Conv}(\color{black}\mathcal{Y}_m) } \min_{\bm{X}\in \mathbb{R}^{p \times m}, \bm{\theta} \in \mathcal{S}^{p}_+} \quad & \left\langle \bm{A}^\top \bm{A}, \bm{\theta}\right\rangle+\langle \bm{B}, \bm{B}\rangle-2\langle \bm{AX}, \bm{B}\rangle+\mu\cdot \mathrm{tr}(\bm{Y}) \\
    \text{s.t.} \quad & \begin{pmatrix} \bm{\theta} & \bm{X} \\ \bm{X}{\color{black}^\top} & \bm{Y}\end{pmatrix} \succeq \bm{0},
\end{aligned}  
\end{equation}
which corresponds to the improved relaxation of \citet[][Equation 7]{bertsimas2021new}.
\end{corollary}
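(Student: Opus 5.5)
The plan is to show that the two problems bound each other. Problem \eqref{prob:shorrelaxation_rrr} is an instance of \eqref{eqn:quadraticShorrelax_lowrank2}, written with $\mathrm{vec}(\bm{X})$ in place of $\mathrm{vec}(\bm{X}^\top)$ and with the roles of $n,m$ swapped. With $\bm{X}\in\mathbb{R}^{p\times m}$ and $\mathrm{vec}(\bm{X})$ stacking the $m$ columns $\bm{X}_i\in\mathbb{R}^p$, the loss is
\[
\|\bm{B}-\bm{AX}\|_F^2=\sum_{i\in[m]}\bm{X}_i^\top\bm{A}^\top\bm{A}\bm{X}_i-2\langle\bm{AX},\bm{B}\rangle+\langle\bm{B},\bm{B}\rangle.
\]
So the quadratic matrix is $\bm{H}=\bm{I}_m\otimes\bm{A}^\top\bm{A}$, which is block diagonal with $m$ identical blocks $\bm{A}^\top\bm{A}$. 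There are no constraints $\bm{Q}_i$. Theorem \ref{prop:common-core-penalty}, applied with columns in place of rows, therefore says that \eqref{prob:shorrelaxation_rrr} has the same value as the problem over $(\bm{Y},\bm{X},\bm{S}^1,\dots,\bm{S}^m)$ with
\begin{itemize}
\item objective $\sum_i\langle\bm{A}^\top\bm{A},\bm{S}^i\rangle$ plus the linear and constant terms,
\item constraints $\bm{S}^i\succeq\bm{X}_i\bm{X}_i^\top$ and $\begin{pmatrix}\sum_i\bm{S}^i & \bm{X}\\ \bm{X}^\top & \bm{Y}\end{pmatrix}\succeq\bm{0}$.
\end{itemize}
The remaining work is to compare this intermediate problem with \eqref{prob:shorrelaxation_rrr_compact}.

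\emph{First direction.} Given a feasible point of the intermediate problem, set $\bm{\theta}:=\sum_i\bm{S}^i$. Because all blocks of $\bm{H}$ coincide, the objective depends on the $\bm{S}^i$ only through this sum, namely as $\langle\bm{A}^\top\bm{A},\bm{\theta}\rangle$. The second semidefinite constraint is then exactly the constraint of \eqref{prob:shorrelaxation_rrr_compact}. Hence the value of \eqref{prob:shorrelaxation_rrr_compact} is at most that of \eqref{prob:shorrelaxation_rrr}.

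\emph{Second direction.} Given $(\bm{Y},\bm{X},\bm{\theta})$ feasible in \eqref{prob:shorrelaxation_rrr_compact}, we must split $\bm{\theta}$ into matrices $\bm{S}^i\succeq\bm{X}_i\bm{X}_i^\top$ with $\sum_i\bm{S}^i=\bm{\theta}$. This step is the only substantive obstacle, because the required inequality $\bm{\theta}\succeq\sum_i\bm{X}_i\bm{X}_i^\top=\bm{X}\bm{X}^\top$ is not immediate from the Schur complement alone.

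To obtain it, apply the Schur complement lemma to the constraint: $\bm{\theta}\succeq\bm{X}\bm{Y}^\dagger\bm{X}^\top$, and the range of $\bm{X}^\top$ lies in the range of $\bm{Y}$. Since $\bm{Y}\in\mathrm{Conv}(\mathcal{Y}_m)$ gives $\bm{0}\preceq\bm{Y}\preceq\bm{I}$, we have $\bm{Y}^\dagger\succeq$ the orthogonal projector onto $\mathrm{range}(\bm{Y})$. That projector fixes the columns of $\bm{X}^\top$, so $\bm{X}\bm{Y}^\dagger\bm{X}^\top\succeq\bm{X}\bm{X}^\top$.

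With $\bm{\theta}\succeq\bm{X}\bm{X}^\top$ in hand, define
\[
\bm{S}^1:=\bm{X}_1\bm{X}_1^\top+(\bm{\theta}-\bm{X}\bm{X}^\top),\qquad \bm{S}^i:=\bm{X}_i\bm{X}_i^\top \ \text{ for } i\geq2.
\]
Each $\bm{S}^i\succeq\bm{X}_i\bm{X}_i^\top$ and $\sum_i\bm{S}^i=\bm{\theta}$, so the intermediate problem is feasible with the same objective value. Together with Theorem \ref{prop:common-core-penalty}, this shows the values of \eqref{prob:shorrelaxation_rrr} and \eqref{prob:shorrelaxation_rrr_compact} coincide.

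Finally, \eqref{prob:shorrelaxation_rrr_compact} matches the improved relaxation in \citet[][Equation 7]{bertsimas2021new} term by term, which gives the last claim.
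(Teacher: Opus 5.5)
Your proposal is correct and takes essentially the route the paper intends. The paper states this result as a direct corollary of Theorem \ref{prop:common-core-penalty}, with $\bm{H}=\bm{I}_m\otimes\bm{A}^\top\bm{A}$ having identical diagonal blocks so that only $\bm{\theta}=\sum_i\bm{S}^i$ matters, but it does not write out a proof. Your verification that $\bm{\theta}\succeq\bm{X}\bm{X}^\top$ follows from the Schur complement together with $\bm{Y}\preceq\bm{I}$, which lets you split $\bm{\theta}$ back into admissible $\bm{S}^i$, supplies the step the paper leaves implicit here; the paper uses the same fact (that $\bm{R}\succeq\bm{0}$) in the proof of Proposition \ref{prop:reducedmatrixcompletionii}.
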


Corollary \ref{corr:rrr} suggests other low-rank problems which are quadratic through $\bm{X} \bm{X}{\color{black}^\top}$ (or $\bm{X}{\color{black}^\top} \bm{X}$), e.g., low-rank factor analysis \citep[][]{bertsimas2017certifiably}, sparse plus low-rank matrix decompositions \citep[][]{bertsimas2021sparse} and quadratically constrained programming \citep{wang2022tightness} admit similarly compact lifted relaxations.

{\color{black}In this section, we have leveraged Theorem \ref{prop:common-core-penalty} to show that for two quadratic low-rank problems (plus three additional examples in Section \ref{append.additionalexamples}), it is possible to eliminate enough variables in the lifted relaxation that no matrices of size $n^2 \times n^2$ remain. 
That is, while naive lifted relaxations may appear to be too large to be useful, they can often be reduced to practical sizes.}

\section{Numerical Results}\label{sec:numerics}
{\color{black}In this section, we report four numerical experiments on low-rank matrix completion problems. Sections \ref{ssec:4.1}–\ref{ssec:4.3} use synthetic instances to isolate the quality of our bounds compared to the matrix perspective relaxation of \citet{bertsimas2021new}, the impact of the projection cuts developed in this paper, and the scalability of our approach, respectively. Section \ref{ssec:real.data} then evaluates the same modeling ideas on real datasets with native missingness patterns.

Our first experiment was conducted on a MacBook Pro laptop (Apple M3, $36$ GB), using MOSEK version 10.2, Julia version $1.9$, and JuMP.jl version $1.13.0$. All other experiments were conducted on the Imperial High Performance Computing environment (AMD EPYC 7742, 1 TB RAM) using MOSEK version 11.0.30, Julia version $1.11$, and JuMP.jl version 1.29.3. All solver parameters are set to their default values for all experiments except where explicitly stated otherwise. 

}

All synthetic experiments in this section use the data generation process of \cite{candes2009exact}: We construct a matrix of observations, $\bm{A}_{\text{full}} \in \mathbb{R}^{n \times m}$, from a rank-$k$ model: $\bm{A}_{\text{full}} = \bm{U} \bm{V} + \epsilon \, \bm{Z}$, where the entries of $\bm{U} \in \mathbb{R}^{n \times k}, \bm{V} \in \mathbb{R}^{k \times m}$, and $\bm{Z} \in \mathbb{R}^{n \times m}$ are drawn independently from a standard normal distribution, and $\epsilon \geq 0$ models the degree of noise. We fix $\lambda=0, \epsilon = 0.1$ and set $n,m, k$ as indicated in each experiment. We then sample a random subset $\Omega \subseteq [n] \times [m]$ of predefined size $\vert \Omega\vert$ uniformly at random \citep[see also][section 1.1.2]{candes2009exact}. Each reported result is averaged over $10$ random seeds.

\subsection{Comparison of Lifted Relaxations With Matrix Perspective Relaxation}\label{ssec:4.1}

We first evaluate the quality of our new relaxations, compared with the matrix perspective relaxation of \citet[][MPRT]{bertsimas2021new}. 
We consider the regularized problem
\begin{align*}
    \min_{\bm{X} \in \mathbb{R}^{n \times m}}\quad \frac{1}{2\gamma}\Vert \bm{X}\Vert_F^2 +\frac{1}{2}\sum_{(i,j) \in \Omega}(A_{i,j}-X_{i,j})^2 \ \text{s.t.} \ \mathrm{rank}(\bm{X})\leq k,
\end{align*}
for some regularization parameter $\gamma>0$, with $m=n=8$ and $k=2$. As $\gamma \rightarrow \infty$, we recover the solution of \eqref{prob:lrmc_orig}. {\color{black}
We compare the lower bounds obtained by three different approaches: MPRT, our lifted relaxation \eqref{prob:lowrankrelaxation_reduced}, hereafter denoted ``Lifted'', and our lifted relaxation augmented with the projection cuts \eqref{eqn:projcut_matcomp} for all three-row subsets $\mathcal{R} \subseteq [n]$, i.e., $\vert \mathcal{R}\vert=3$ (``Lifted-proj-cuts''). Figure \ref{fig:gw_vary_gamma} reports the relative optimality gap 
achieved by each approach---using the alternating minimization method of \cite{burer2003nonlinear, jain2013low} initialized with a truncated SVD of $\mathcal{P}(\bm{A})$ for the upper bound (absolute values are reported in Figures \ref{fig:gw_vary_gamma_absoluteLBs}--\ref{fig:gw_vary_gamma_absoluteUBs_BM})---as $\gamma$ increases, for different proportions of sampled entries $p =|\Omega|/mn$. We compute the relative gap as $\mathrm{Gap}=\frac{UB-LB}{UB}$ with $UB>0$.

Consistent with Proposition \ref{prop:mprt_is_weaker}, we observe that Lifted and Lifted-proj-cuts obtain smaller optimality gaps than MPRT, for all values of $\gamma$, and that the benefit increases as the fraction of sampled entries $p$ increases. In particular, when $p=0.95$, MPRT achieves an uninformative gap of $100 \%$ as $\gamma$ increases (by returning a trivial lower bound of 0, see Figure \ref{fig:gw_vary_gamma_absoluteLBs}) while Lifted achieves a gap lower than 50\% for the largest values of $\gamma$. 
Lifted-proj-cuts has an average optimality gap of $0$ for all values of $\gamma$ (average absolute gap $<10^{-5}$ for all $\gamma$). From this experiment, it seems that imposing the projection cuts leads to significantly tighter bounds in some circumstances ($p=0.95$), and only moderately tighter bounds in others ($p=0.5$). The computational cost of imposing projection cuts is also rather modest at this scale; an average solve time of around $0.3$ seconds per instance with projection cuts compared with around $0.05$ seconds without projection cuts at this scale (see Figure \ref{fig:gw_vary_gamma_runtimes} for computational times). }

\begin{figure}
\begin{subfigure}{0.45\textwidth}
        \includegraphics[width=\textwidth]{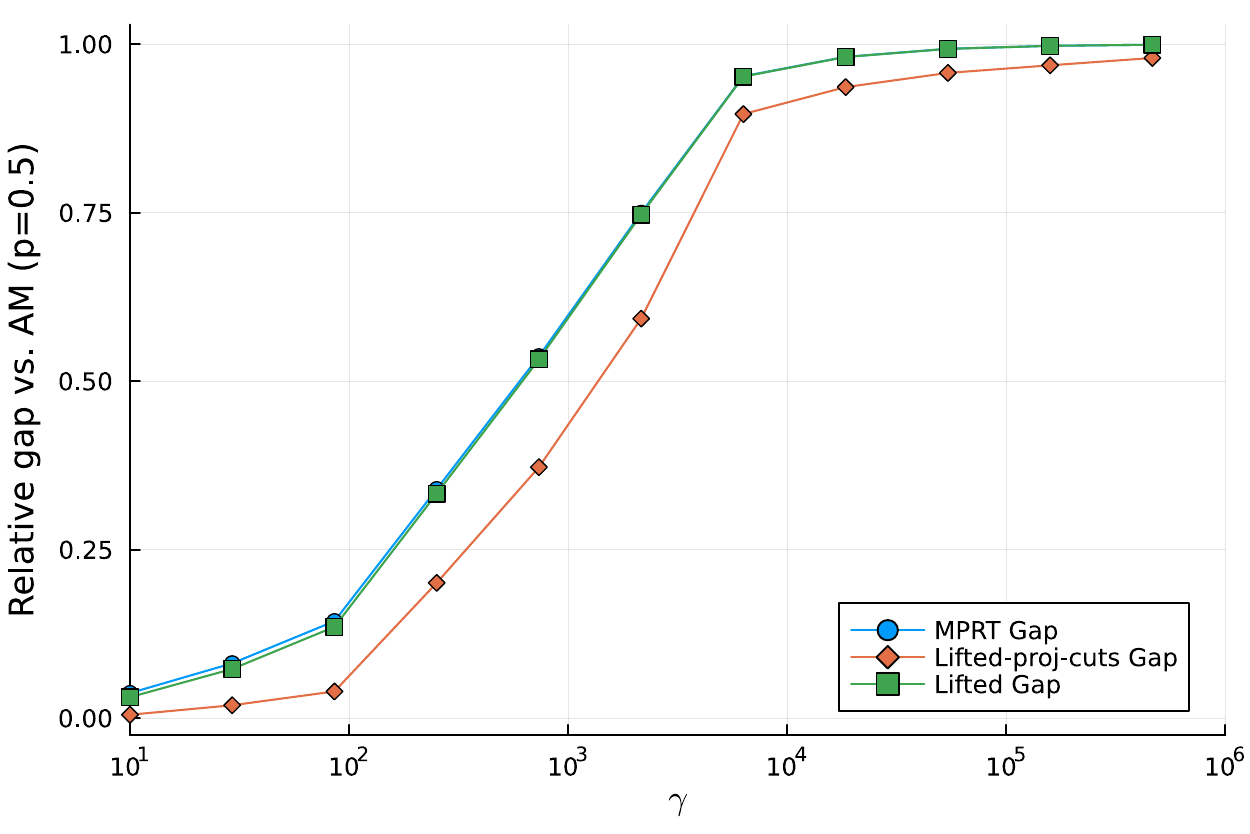}
        \caption{$p=0.5$}
        \label{fig:bounds_0p5}
    \end{subfigure}
        \begin{subfigure}{0.45\textwidth}
        \includegraphics[width=\textwidth]{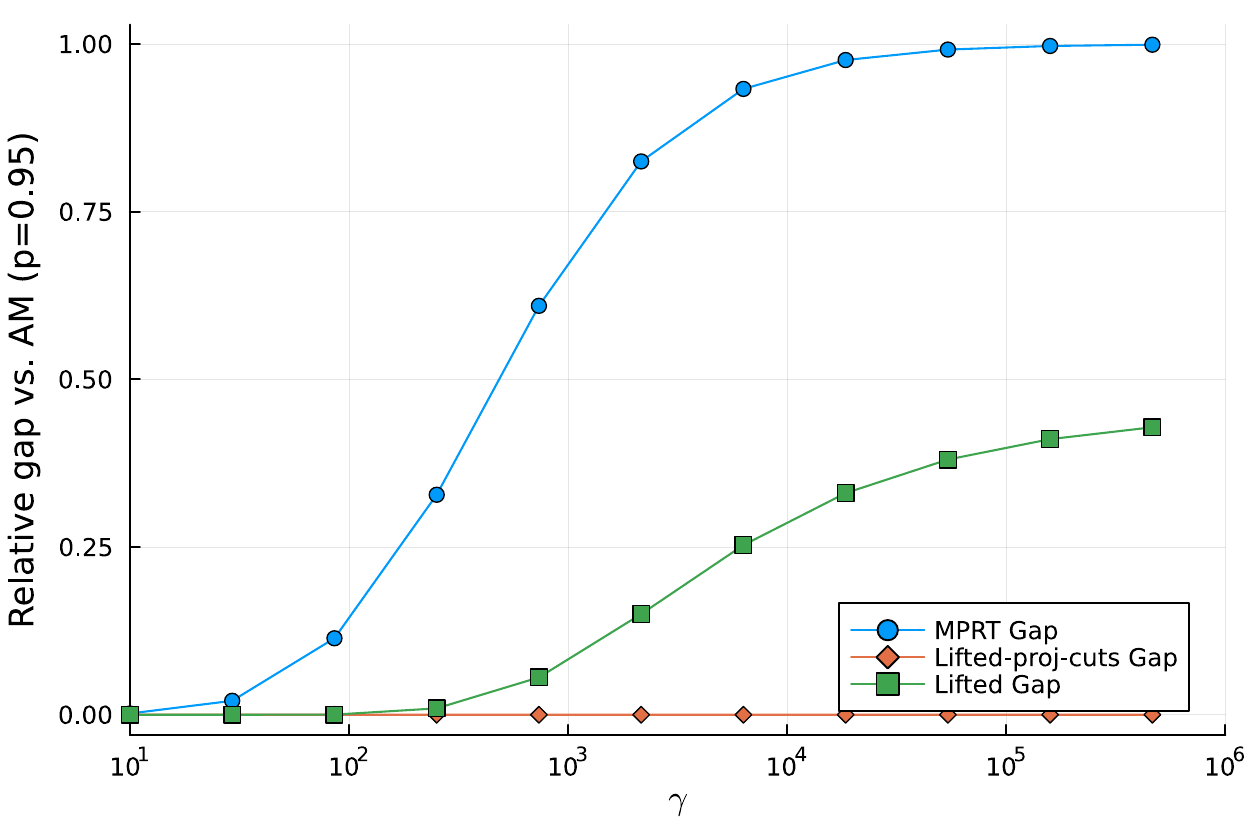}
        \caption{$p=0.95$ }
        \label{fig:bounds_0p95}
    \end{subfigure}
    \caption{Relative gap obtained with different relaxations of the regularized matrix completion problem as we vary $\gamma$. We fix $n=8$. Results are averaged over 10 replications.
    }
    \label{fig:gw_vary_gamma}
\end{figure}

\subsection{Performance of Projection Cuts on Synthetic Data}
{\color{black}
Our second experiment isolates the effect of projection cuts on the strength and computational cost of our lifted relaxation \eqref{prob:lowrankrelaxation_reduced}. We use the same experimental setup as Figure \ref{fig:gw_vary_gamma}, with varying $n, k, \gamma$ and fixing $m=10, \epsilon=0.1, \frac{\vert\Omega\vert}{nm}=0.5$. For each triple $(n, k, \gamma)$ we compare three lower bounds: the lifted relaxation \eqref{prob:lowrankrelaxation_reduced} without projection cuts, the same relaxation with $100$ randomly selected projection cuts \eqref{eqn:projcut_matcomp} of cardinality $k+1$, and the same relaxation with all projection cuts of cardinality $k+1$. If fewer than $100$ cuts are available, the random cut variant imposes all of them. We report average runtimes and average relative optimality gaps over $10$ instances per triple $(n, k, \gamma)$, using the same alternating minimization setup as 
in Figure \ref{fig:gw_vary_gamma} to compute upper bounds. All SDP relaxations are solved with a relative optimality tolerance of $10^{-8}$ and a time limit of $16$ hours. For SDP instances that return the status \verb|SLOW_PROGRESS| (approximately $25\%$ of instances), we use the objective bound when \verb|Mosek| terminates as a surrogate for the semidefinite bound.

\begin{table}[t]
\centering
\caption{\color{black} Average performance of lifted relaxation \eqref{prob:lowrankrelaxation_reduced} with and without the projection cuts \eqref{eqn:projcut_matcomp} on synthetic $n \times 10$ matrix completion instances. ``No. cuts'' denotes the relaxation \eqref{prob:lowrankrelaxation_reduced}, ``Rand.'' denotes \eqref{prob:lowrankrelaxation_reduced} augmented with 100 uniformly sampled projection cuts \eqref{eqn:projcut_matcomp} of cardinality $k+1$, while ``All'' denotes \eqref{prob:lowrankrelaxation_reduced} augmented with all such cuts. Gaps are relative gaps $(\mathrm{UB}-\mathrm{LB})/\mathrm{UB}$, averaged over 10 replications; runtimes are in seconds. The reported regularization parameter is $\gamma/nm$. A dash denotes that an instance could not be solved within the given time/memory budget.}
\label{tab:projection-cut-experiment}
\scriptsize
\begin{tabular}{cc r r r r r r r r r r r r}
\toprule
& & \multicolumn{6}{c}{$k=2$} & \multicolumn{6}{c}{$k=3$} \\
\cmidrule(lr){3-8}\cmidrule(l){9-14}
& & \multicolumn{2}{c}{No cuts} 
& \multicolumn{2}{c}{Rand. cuts} 
& \multicolumn{2}{c}{All cuts} 
& \multicolumn{2}{c}{No cuts} 
& \multicolumn{2}{c}{Rand. cuts} 
& \multicolumn{2}{c}{All cuts} \\
\cmidrule(lr){3-4}\cmidrule(lr){5-6}\cmidrule(lr){7-8}
\cmidrule(lr){9-10}\cmidrule(lr){11-12}\cmidrule(l){13-14}
$n$ & $\gamma/nm$ 
& Gap (\%) & T (s) 
& Gap (\%) & T (s) 
& Gap (\%) & T (s) 
& Gap (\%) & T (s) 
& Gap (\%) & T (s) 
& Gap (\%) & T (s) \\
\midrule
10 & $10^4$ & 16.60\% & 3.34 & 4.98\%  & 1.47 & 1.29 \% & 1.86 & 9.79\% & 3.37 & 7.91\%  & 2.25 & 7.06 \% & 6.24 \\
10 & $10^5$ & 53.01\% & 3.38 & 7.99\%  & 1.63 & 6.74 \% & 2.16 & 19.80\% & 3.37 & 17.47\%  & 1.97 & 15.73 \% & 5.13 \\
10 & $10^6$ & 89.56\% & 3.38 & 27.97\%  & 1.89 & 25.83 \% & 2.11 & 41.74\% & 3.32 & 39.48\%  & 1.90 & 37.40 \% & 4.96 \\\midrule
20 & $10^4$ & 0.93\% & 4.28 & 0.04\%  & 16.61 & 0.00 \% & 113.75 & 2.74\% & 4.19 & 1.27\%  & 34.76 & 0.02 \% & 4234.1 \\
20 & $10^5$ & 25.17\% & 4.20 & 2.45\%  & 23.50 & 0.00 \% & 120.29 & 17.32\% & 4.45 & 12.55\%  & 38.26 & 0.49 \% & 4779.1 \\
20 & $10^6$ & 79.53\% & 4.82 & 21.95\%  & 23.46 & 0.00 \% & 131.48 & 64.27\% & 4.20 & 56.13\%  & 34.69 & 12.77 \% & 5439.7 \\
\midrule
40 & $10^4$ & 0.00\% & 11.64 & 0.00\%  & 1562.3 & --  & -- & 0.02\% & 23.98 & 0.00\%  & 1891.7 & -- & -- \\
40 & $10^5$ & 0.63\% & 13.98 & 0.00\%  & 1641.4 & --  & -- & 1.36\% & 13.80 & 0.32\%  & 2751.7 & -- & -- \\
40 & $10^6$ & 43.23\% & 13.21 & 12.49\%  & 2439.5 & -- & -- & 35.10\% & 13.02 & 26.95\%  & 2948.2 & -- & -- \\\midrule
60 & $10^4$ & 0.00\% & 105.65 & 0.00\%  & 20375.3 & -- & -- & 0.00\% & 108.16 & --  & -- & -- & -- \\
60 & $10^5$ & 0.00\% & 118.27 & 0.00\%  & 19914.8 & -- & -- & 0.00\% & 131.56 & --  & -- & -- & -- \\
60 & $10^6$ & 20.46\% & 117.87 & 7.62\%  & 42081.5 & -- & -- & 15.41\% & 119.30 & --  & -- & -- & -- \\\midrule
80 & $10^4$ & 0.00\% & 467.88 & --  & -- & -- & -- & 0.00\% & 496.72 & --  & -- & -- & -- \\
80 & $10^5$ & 0.00\% & 808.08 & --  & -- & -- & -- & 0.00\% & 467.07 & --  & -- & -- & -- \\
80 & $10^6$ & 5.09\% & 603.24 & --  & -- & -- & -- & 4.28\% & 568.44 & --  & -- & -- & -- \\\midrule
100 & $10^4$ & 0.00\% & 1720.5 & --  & -- & -- & -- & 0.00\% & 1577.2 & --  & -- & -- & -- \\
100 & $10^5$ & 0.00\% & 1995.8 & --  & -- & -- & -- & 0.00\% & 1803.1 & --  & -- & -- & -- \\
100 & $10^6$ & 0.46\% & 2521.3 & --  & -- & -- & -- & 0.51\% & 2747.1 & --  & -- & -- & -- \\
\bottomrule
\end{tabular}
\end{table}
Table \ref{tab:projection-cut-experiment} shows that the base relaxation \eqref{prob:lowrankrelaxation_reduced} is very strong when $n$ is large or when $\gamma$ is relatively small (i.e., the amount of regularization is large), but its gap increases as $\gamma$ increases or as $n$ decreases. 
Furthermore, we observe that projection cuts can substantially strengthen
the lifted relaxation \eqref{prob:lowrankrelaxation_reduced}, especially in the weakly regularized regime where $\gamma$ is very large. 
In this case, imposing all cardinality-$k+1$ projection cuts often closes most of the remaining gap, at the cost of a substantially larger SDP. Moreover, the random cut variant captures a meaningful portion of this performance improvement while remaining much cheaper than imposing all cuts. 
This supports a simple strategy: solve the lifted relaxation without cuts, and perform alternating minimization, then impose projection cuts only when the base relaxation is not sufficiently strong. 
}

\subsection{Scalability of Lifted Relaxation on Synthetic Data}\label{ssec:4.3}
{\color{black}
Next, motivated by the observation that relaxations without projection cuts may sometimes be preferable as $n$ increases, our third experiment benchmarks the scalability of our reduced lifted relaxation \eqref{eqn:lmrc_further_reduced} without projection cuts, as we increase $n$ with $m=50, k=2$. We do not impose any Frobenius regularization for this experiment, and thus the method of \citet{bertsimas2021new} is not applicable. We use the same data generation process as the previous experiment to generate $\bm{A}$. To ensure that our relaxation \eqref{eqn:lmrc_further_reduced} is non-trivial, we include a universal-query component in the observation design. Specifically, we set $q=\floor{\rho m}$, observe the first $q$ columns in every row, and then observe each entry in the remaining $m-q$ columns independently with probability $\tau=\frac{p-\sfrac{q}{m}}{1-\sfrac{q}{m}}$ in order that the expected number of observed entries is $pnm$. This combines the universal query motivation of \citet{goldberg2001eigentaste} with the missing completely at random sampling model of \citet{rubin1976inference}. 

We report the average lower bound over $10$ random seeds (divided by $nm$ so that quantities have the same scale as we vary $n$; left) and the average computational time (right) in Figure \ref{fig:gw_vary_n} for $p=0.4, \rho=0.3$, with Mosek's relative optimality tolerance set to $10^{-4}$. We also report the average objective value obtained by running alternating minimization from $10$ random initialization points and taking the best solution \citep{burer2003nonlinear, jain2013low} as a baseline. We observe that the lifted compact relaxation \eqref{eqn:lmrc_further_reduced} scales up to $n=350$. Further improvements in SDP algorithms and hardware may extend this range further. We observe similar scalability performance, albeit with a larger optimality gap, for $p=0.4, \rho=0.2$ in Figure \ref{fig:gw_vary_n_2}.
}
\begin{figure}[h!]
    \centering
\begin{subfigure}{0.45\textwidth}
        \includegraphics[width=\textwidth]{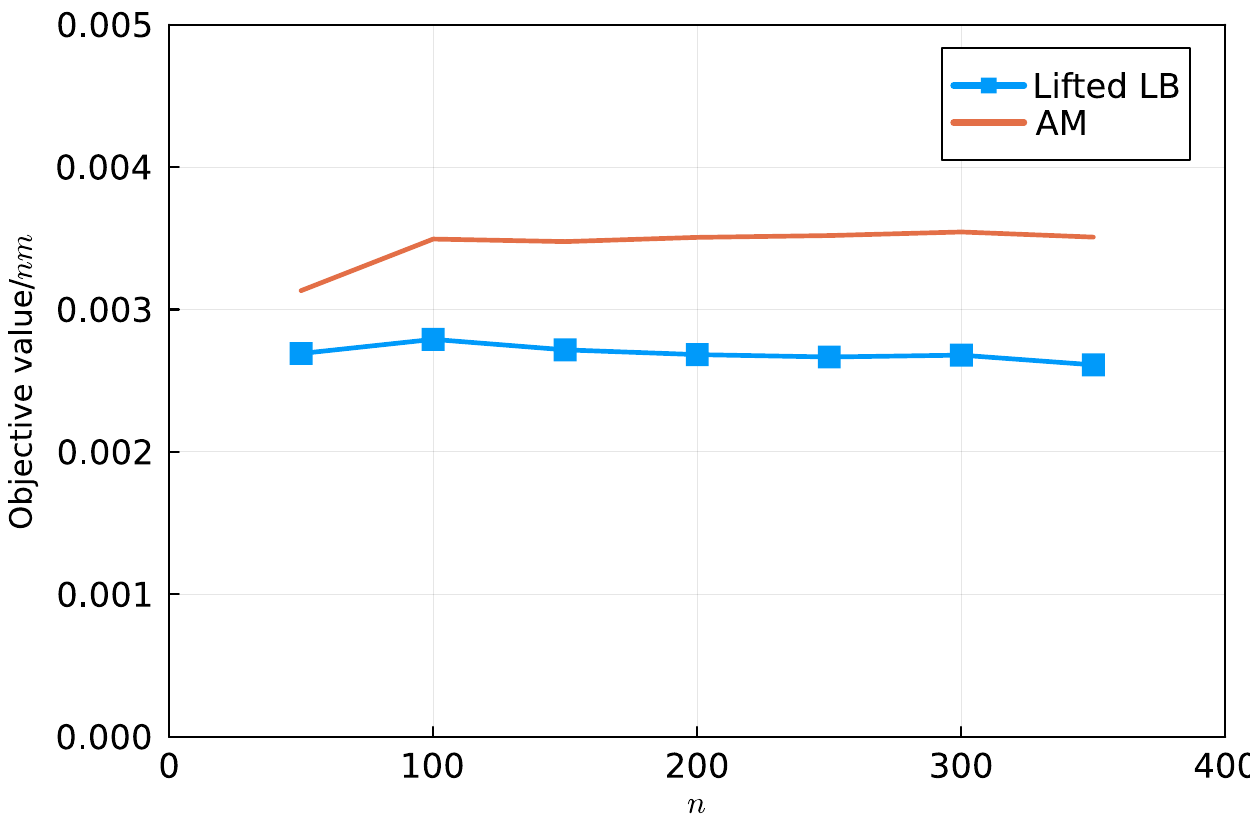}
        \caption{Objective values}
        \label{fig:bounds_0p5_objvals}
    \end{subfigure}
        \begin{subfigure}{0.45\textwidth}
        \includegraphics[width=\textwidth]{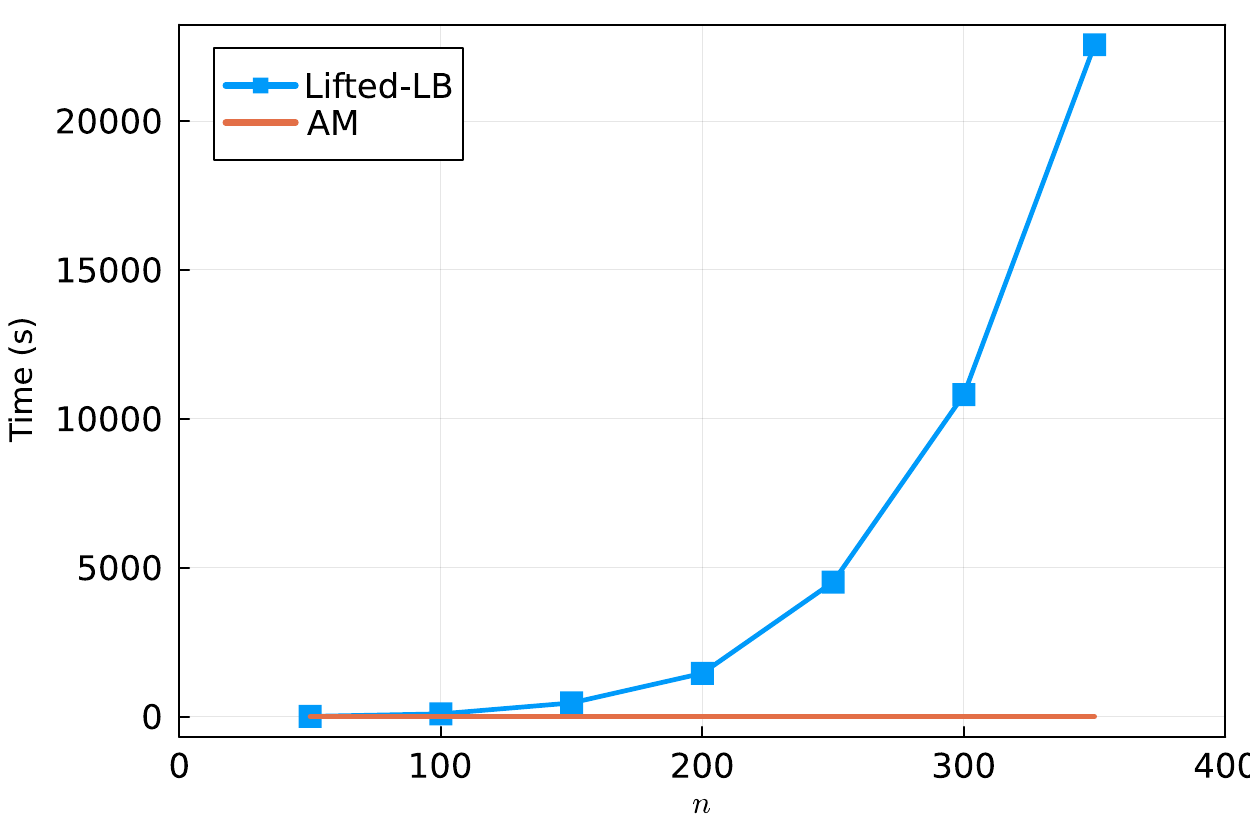}
        \caption{Runtimes}
        \label{fig:bounds_0p95_objvals}
    \end{subfigure}
    \caption{\color{black}Normalized objective values (left panel) and runtime for the compact lifted relaxation (right panel) as we vary $n$ with $m=50$ for our compact lifted relaxation \eqref{eqn:lmrc_further_reduced} with $p=0.4, \rho=0.3$.}
    \label{fig:gw_vary_n}
\end{figure}

\subsection{Performance on Real Data}\label{ssec:real.data}
{\color{black}
Our final experiment demonstrates the real-world applicability of our methods by evaluating our lifted relaxations on real missing-data matrices. We select five datasets from the UCI and RDatasets repos as indicated in Table \ref{tab:real-data-projection-cuts}. For each dataset, we drop the response variable and categorical variables, standardize numerical columns using observed entries, and preserve the native missingness pattern. We then construct ten $30 \times 10$ submatrices by selecting missing-affected numerical columns and rows enriched for missing entries, without masking any additional values. For $k \in \{2, 3\}$ and $\gamma/(nm)=10^5$, we solve the lifted relaxation \eqref{prob:lowrankrelaxation_reduced} with and without 1000 randomly sampled projection cuts \eqref{eqn:projcut_matcomp} of cardinality $k+1$. Table \ref{tab:real-data-projection-cuts} reports average runtimes and relative gaps against the same alternating-minimization upper bound used above. All SDP solver parameters are the same as in the previous experiment, except that we impose a relative optimality tolerance of $10^{-8}$, a memory limit of 128 GB of RAM, and a time limit of $8$ hours.

\begin{table}[t]
\centering
\caption{Real-data results for the lifted relaxation \eqref{prob:lowrankrelaxation_reduced} with and without $1000$ randomly selected projection cuts \eqref{eqn:projcut_matcomp}.
Each row reports averages over ten \(30\times 10\) submatrices constructed from the native missingness pattern of the indicated
dataset. We fix \(\gamma/(nm)=10^5\). ``Obs. frac.'' denotes the average fraction of observed entries in the selected 30×10 submatrices, while ``Miss. cols.'' denotes the average number of the 10 selected columns that contain at least one missing entry.}
\label{tab:real-data-projection-cuts}
\scriptsize
\setlength{\tabcolsep}{4pt}
\renewcommand{\arraystretch}{1.07}
\begin{tabular}{llccrrrrrrrr}
\toprule
& & & & \multicolumn{4}{c}{$k=2$} & \multicolumn{4}{c}{$k=3$} \\
\cmidrule(lr){5-8}\cmidrule(l){9-12}
& & & & \multicolumn{2}{c}{No cuts} & \multicolumn{2}{c}{Rand.} 
        & \multicolumn{2}{c}{No cuts} & \multicolumn{2}{c}{Rand.} \\
\cmidrule(lr){5-6}\cmidrule(lr){7-8}
\cmidrule(lr){9-10}\cmidrule(l){11-12}
Dataset & Source & Obs. & Miss. & Gap & T & Gap & T & Gap & T & Gap & T \\
        &        & frac. & cols. & (\%) & (s) & (\%) & (s) & (\%) & (s) & (\%) & (s) \\
\midrule
Arrhythmia  & UCI  & 0.891 & 2.7 & 26.00\% & 9.58 & 10.21 \% & 58.75 & 28.05\% & 8.97 & 14.84 \% & 104.43 \\
Cars93 & RDatasets             &  0.953 & 3 & 53.89\% & 9.24 & 8.32 \% & 54.74 &
64.93\% & 9.14 & 19.90 \% & 115.71 \\
MCAS & RDatasets   & 0.882 & 3 & 35.04\% & 9.18 & 11.7 \% & 58.64 &
46.41\% & 9.85 & 23.66 \% & 124.89 \\
ambientNOxCH     & RDatasets &  0.858 & 9.8 & 100\% & 9.33 & 24.48 \% & 67.66 &
100\% & 8.91 & 42.67 \% & 143 \\
movies & RDatasets & 0.806 & 2 & 0.06\% & 9.26 & 0.06 \% & 54.67 &
0.00\% & 9.26 & 0.00 \% & 100.4 \\\bottomrule
\end{tabular}
\end{table}

We observe that the strength of our base relaxation \eqref{prob:lowrankrelaxation_reduced} depends strongly on the dataset used, in the sense that it is tight ($0\%$ gap when $k=3$) with no projection cuts on the movies dataset, and $100\%$ with no projection cuts on the ambientNOxCH dataset. However, for datasets where the gap between our base relaxation \eqref{prob:lowrankrelaxation_reduced} and alternating minimization is not $0\%$, imposing $1000$ projection cuts is highly effective, reducing the optimality gap by more than half in most cases.
}

\section{Conclusion} 
This paper develops scalable semidefinite relaxations for rank-constrained quadratic optimization problems.  
{\color{black}Our starting point is a mixed-projection reformulation of the rank-constrained problems and a lifting}, obtained by introducing moment variables for the quadratic terms in $\bm{X}$ and the projection matrix $\bm{Y}$. While a direct lifted relaxation involves large moment matrices with $n^2 \times n^2$ semidefinite blocks, we show that most of these blocks are redundant and can be removed without altering the relaxation's optimal value. Our key insight is that, in a lifted relaxation, the effective size of a low-rank relaxation is governed by the subset of moments of the matrix queried by the objective, side constraints, and coupled with a projection matrix, rather than by the size of the matrix squared. 

{\color{black}Beyond this generic reduction, we derive a practical modeling workflow to exploit additional structure. For instance, for unregularized low-rank matrix completion, our relaxation can be expressed using the original matrix, a $m \times m$ semidefinite matrix, a $n \times n$ semidefinite matrix, and a single $(m+n) \times (m+n)$ semidefinite coupling constraint, eliminating any need to manipulate any matrix whose dimension is $O((n+m)^2)$. Collectively, our examples suggest that many large lifted relaxations can often be transformed into implementable SDPs.}

\theendnotes

\end{bibunit}

\newpage
\ECSwitch
\renewcommand\thealgorithm{EC.\arabic{algorithm}}
\setcounter{algorithm}{0} 
\ECHead{Supplementary Material}

\section{Supplemental Theoretical Results and Examples for Section \ref{sec:lowrankshorrelax}}

\subsection{Example Showing Valid Inequalities May Strengthen Lifted Relaxation}\label{ssec:ex1}

\begin{example}\label{example:notequivalent}
Consider a low-rank matrix completion problem of the form
\begin{align*}
 \min_{\bm{X} \in \mathbb{R}^{n \times m}} \quad & \frac{1}{2\gamma}\Vert \bm{X}\Vert_F^2+\frac{1}{2}\sum_{(i,j) \in \Omega} (X_{i,j}-A_{i,j})^2 \ \text{s.t.} \ \mathrm{rank}(\bm{X})\leq k.
\end{align*}

Let the problem data be $\gamma=100, k=2, n=7, m=5$, and suppose we are trying to impute the following matrix, where $*$ denotes a missing entry:
\begin{align*}
    \bm{A}=\begin{pmatrix}  
    -2 &  *   & -1 &  1 &  -1\\
  * & 4 & -4 & -5 & -4\\
  * & -3 & 1 & 4 & 3\\
  3 &  5 & -5 & -5 &  -1\\
  7 &  8  & -10  & -8  & 1\\
  3  & 1  & -2  & *  & 5\\
  7  & 7  & -13 & -8  & *\end{pmatrix}.
\end{align*}

{\color{black}
Then (using Mosek version 10.2 to solve all semidefinite relaxations):
\begin{itemize}
    \item The relaxation of \citet{bertsimas2021new} has an optimal objective value of $4.637$.
    \item The semidefinite relaxation \eqref{prob:lowrankrelaxation_reduced} has an objective value of 5.0875.
    \item The semidefinite relaxation \eqref{prob:lowrankrelaxation_reduced} augmented with all projection cuts \eqref{eqn:projcut_matcomp} for $\vert \mathcal{R}\vert=3$ has an objective value of 10.142.
    \item The method of \cite{burer2003nonlinear} finds a feasible solution with objective value 10.142. 
\end{itemize}
That is, the lifted relaxation itself has an optimality gap of around $50\%$, but projection cuts are sufficient to close this gap to $0$.
}
\end{example}

\subsection{Proof of Proposition \ref{prop:failureofshornaive}}\label{sec:failureshor1}{\color{black}

\proof{Proof of Proposition \ref{prop:failureofshornaive}}
Let $(\bm{X},\bm{U},\bm{V},\bm{W})$ be feasible for \eqref{eq:uv-shor-relaxation}. Then, by considering the appropriate principal submatrix it is immediate that
$$
\begin{pmatrix}
1 & \mathrm{vec}(\bm{X}^\top)^\top \\
\mathrm{vec}(\bm{X}^\top) & \bm{W}_{x,x}
\end{pmatrix}
\succeq 0.
$$
Moreover, the
pair $(\bm{X},\bm{W}_{x,x})$ is feasible for
\eqref{eq:rank-free-shor-relaxation} and attains the same objective value as in \eqref{eq:uv-shor-relaxation}. Therefore, the optimal value of \eqref{eq:uv-shor-relaxation} is at least that
of \eqref{eq:rank-free-shor-relaxation}.

Conversely, fix any feasible solution $(\bm{X},\bm{W}_{x,x})$ of
\eqref{eq:rank-free-shor-relaxation}. Set $\bm{U} = \bm{0}, \ \bm{V} = \bm{0}, \ \bm{W}_{x,u} = \bm{0}, \ \bm{W}_{x,v} = \bm{0}$. Next, partition $\bm{W}_{u,v}$ into $k \times k$ blocks
$\bm{W}_{u,v}^{(j,\ell)} \in \mathbb{R}^{n\times m}$ and define
$\bm{W}_{u,v}^{(1,1)} := \bm{X}, \ \bm{W}_{u,v}^{(j,\ell)} := 0 \ \text{for}\ (j,\ell) \neq (1,1).$ Next, set $\bm{W}_{u,u}^{(j,j)} := \frac{1}{n} \bm{I}_n \ \forall j \in [k], \ \bm{W}_{u,u}^{(j,l)} := \bm{0} \ \forall j, l \in [k]: j \neq l, \ \bm{W}_{v,v}^{(1,1)} := n \bm{X^\top X}$
with every other block of $\bm{W}_{v,v}$ equal to zero. By construction, $\sum_{j=1}^k \bm{W}_{u,v}^{(j,j)} = \bm{X}$
Moreover, the matrix
\[
\begin{pmatrix}
\bm{W}_{u,u} & \bm{W}_{u,v} \\
\bm{W}_{u,v}^\top & \bm{W}_{v,v}
\end{pmatrix}
\]
is block diagonal, with the only nontrivial coupled block being 
\[
\begin{pmatrix}
\frac{1}{n} \bm{I}_n & \bm{X} \\
\bm{X}^\top & n \bm{X}^\top \bm{X}
\end{pmatrix},
\]
which is clearly positive semidefinite. Therefore, the full moment matrix in \eqref{eq:uv-shor-relaxation} is block
diagonal:
\[
\begin{pmatrix}
1 & \mathrm{vec}(\bm{X}^\top)^\top & 0 & 0 \\
\mathrm{vec}(\bm{X}^\top) & \bm{W}_{x,x} & 0 & 0 \\
0 & 0 & \bm{W}_{u,u} & \bm{W}_{u,v} \\
0 & 0 & \bm{W}_{u,v}^\top & \bm{W}_{v,v}
\end{pmatrix}
\succeq 0,
\]
because its two diagonal blocks are positive semidefinite. Thus, the
constructed point is feasible for \eqref{eq:uv-shor-relaxation} and attains
the same objective value as $(\bm{X},\bm{W}_{x,x})$ in
\eqref{eq:rank-free-shor-relaxation}. It follows that the optimal value of \eqref{eq:uv-shor-relaxation} is at most that of \eqref{eq:rank-free-shor-relaxation}. Combining the two inequalities yields the result.
\hfill \Halmos \endproof

\begin{remark} The feasible solution constructed for the reverse inequality also satisfies the constraint $\mathrm{tr}(\bm{W}_{u,u}^{(j,l)})=\delta_{j,l}, \forall j,l \in [k]$.
\end{remark}
}

\subsection{Proof of Proposition \ref{prop:shorequiv2}}\label{ssec:shorequiv3}

{\color{black}
\proof{Proof of Proposition \ref{prop:shorequiv2}}
Let $z_{\mathrm{BM}}$ denote the optimal value of \eqref{eq:uv-shor-relaxation2} and $z_{\mathrm{proj}}$ denote the optimal value of \eqref{eqn:quadraticShorrelax_lowrank2}. We show that $z_{\mathrm{BM}}=z_{\mathrm{proj}}$. 

First, let $(\bm{X},\bm{Y},\bm{W}_{x,x})$ be feasible for \eqref{eqn:quadraticShorrelax_lowrank2}. We show that $(\bm{X},\bm{Y},\bm{W}_{x,x})$ generates a solution in \eqref{eq:uv-shor-relaxation2} with an equal or lower objective, and hence $z_{\mathrm{BM}}\leq z_{\mathrm{proj}}$. 

To show this, define $\bm{\Theta} := \sum_{i=1}^n \bm{W}_{x,x}^{(i,i)}, \ 
\tau := \frac{k-\operatorname{tr}(\bm{Y})}{kn} \ge 0.$ Moreover, set $\bm{U}=\bm{V}=\bm{0}$, $\bm{W}_{x,u}=\bm{W}_{x,v}=\bm{0}$,  all off-diagonal factor blocks to zero, and for each
$j \in [k]$ define
\begin{align*}
\bm{W}_{u,u}^{(j,j)} := \frac1k \bm{Y} + \tau \bm{I}_n,
\
\bm{W}_{u,v}^{(j,j)} := \frac1k \bm{X},
\
\bm{W}_{v,v}^{(j,j)} := \frac1k \bm{\Theta}.    
\end{align*}

Then, we have
\[
\operatorname{tr}\!\bigl(\bm{W}_{u,u}^{(j,j)}\bigr)=1,\
\sum_{j=1}^k \bm{W}_{u,v}^{(j,j)} = \bm{X},\
\sum_{j=1}^k \bm{W}_{v,v}^{(j,j)} = \bm{\Theta}.
\]
Moreover, for each $j$,
\[
\begin{pmatrix}
\bm{W}_{u,u}^{(j,j)} & \bm{W}_{u,v}^{(j,j)}\\
(\bm{W}_{u,v}^{(j,j)})^\top & \bm{W}_{v,v}^{(j,j)}
\end{pmatrix}
=
\frac1k
\begin{pmatrix}
\bm{Y} & \bm{X}\\
\bm{X}^\top & \bm{\Theta}
\end{pmatrix}
+
\begin{pmatrix}
\tau \bm{I}_n & \bm{0}\\
\bm{0} & \bm{0}
\end{pmatrix}
\succeq \bm{0}.
\]
Hence, the factor block is PSD, and together with
\[
\begin{pmatrix}
1 & \operatorname{vec}(\bm{X}^\top)^\top\\
\operatorname{vec}(\bm{X}^\top) & \bm{W}_{x,x}
\end{pmatrix}\succeq 0
\]
this gives a feasible point of \eqref{eq:uv-shor-relaxation2} with the same objective value and the same side constraints.

Conversely, let $(\bm{X},\bm{U},\bm{V},\bm{W})$ be feasible for \eqref{eq:uv-shor-relaxation2}, and define 
\begin{align*}
\bm{\widetilde Y} := \sum_{j=1}^k \bm{W}_{u,u}^{(j,j)},
\ \bm{\Theta} := \sum_{j=1}^k \bm{W}_{v,v}^{(j,j)}= \sum_{i=1}^n \bm{W}_{x,x}^{(i,i)}.
\end{align*}
Summing the principal $(u_j,v_j)$-blocks yields
\[
\begin{pmatrix}
\bm{\widetilde Y} & \bm{X}\\
\bm{X}^\top & \bm{\Theta}
\end{pmatrix}\succeq \bm{0},
\qquad
\operatorname{tr}(\bm{\widetilde Y})=k,
\]
while the principal $(1,\bm{X})$-block gives
\[
\bm{W}_{x,x}\succeq \operatorname{vec}(\bm{X}^\top)\operatorname{vec}(\bm{X}^\top)^\top.
\]
Summing the diagonal $m\times m$ blocks of the latter inequality gives
\[
\bm{\Theta} \succeq \bm{X^\top X}.
\]
Now set
\[
\bm{Y} := \bm{X}\bm{\Theta}^\dagger \bm{X}^\top.
\]
Since $\bm{\Theta} \succeq \bm{X^\top X}$, we have $\ker(\bm{\Theta})\subseteq \ker(\bm{X})$, and therefore
\[
\begin{pmatrix}
\bm{\Theta} & \bm{X}^\top\\
\bm{X} & \bm{Y}
\end{pmatrix}\succeq \bm{0}.
\]
Also, with $\bm{R}:=\bm{X}\bm{\Theta}^{\dagger/2}$,
\begin{align*}
\bm{R}^\top \bm{R}
= \bm{\Theta}^{\dagger/2}\bm{X}^\top \bm{X}\bm{\Theta}^{\dagger/2}
\preceq \bm{\Theta}^{\dagger/2}\bm{\Theta}\bm{\Theta}^{\dagger/2}
= P_{\operatorname{range}(\bm{\Theta})}
\preceq \bm{I},
\end{align*}
so $\bm{0} \preceq \bm{Y} = \bm{RR}^\top \preceq \bm{I}$. Finally, applying the generalized Schur complement to
\[
\begin{pmatrix}
\bm{\widetilde Y} & \bm{X}\\
\bm{X}^\top & \bm{\Theta}
\end{pmatrix}\succeq \bm{0}
\]
gives
\[
\bm{Y} = \bm{X}\bm{\Theta}^\dagger \bm{X}^\top \preceq \bm{\widetilde Y},
\]
hence
\[
\operatorname{tr}(\bm{Y})\le \operatorname{tr}(\bm{\widetilde Y})=k.
\]
Thus $(\bm{X},\bm{Y},\bm{W}_{x,x})$ is feasible for \eqref{eqn:quadraticShorrelax_lowrank2} with the same objective value
and the same side constraints. Therefore, $z_{\mathrm{proj}} \le z_{\mathrm{BM}}$. \Halmos
\endproof
}

\newpage

\section{Additional Examples Supporting Section \ref{sec:examplesofrelaxations}}\label{append.additionalexamples}
In this section, we first provide a proof of Proposition \ref{prop:reducedmatrixcompletionii} and then support our discussion in Section \ref{sec:examplesofrelaxations} by providing three additional families of low-rank problems that admit compact semidefinite relaxations using the lifting outlined in Section \ref{ssec:shorrelax}.

{\color{black}
\subsection{Proof of Proposition \ref{prop:reducedmatrixcompletionii}}\label{ssec:redmc2prop}
\proof{Proof of Proposition \ref{prop:reducedmatrixcompletionii}}
We fix $(\bm{X},\bm{Y})$ in \eqref{prob:lowrankrelaxation_reduced} and consider the partial minimization problem with respect to $\bm{S}^i$. We write
\begin{align*}
\bm{S}^i=\bm{X}_{i,.}\bm{X}_{i,.}^\top+\bm{U}^i,\ \bm{U}^i\succeq\bm{0}.
\end{align*}
Then, by the Schur complement lemma, the slack matrices $\bm{U}^i$ must satisfy
\begin{align*}
\sum_{i\in[n]} \bm{U}^i \succeq \bm{R}:=\bm{X}^\top \bm{Y}^\dagger \bm{X}-\bm{X}^\top \bm{X},
\end{align*}
whenever $\mathrm{range}(\bm{X})\subseteq \mathrm{range}(\bm{Y})$. Since $\bm{0}\preceq \bm{Y}\preceq \bm{I}$, we have
\(\bm{R}\succeq\bm{0}\). Minimizing for $\bm{S}^i$ is then equivalent to solving
\begin{align*}
\phi_H(\bm{R}):=
\min_{\substack{\bm{U}^i\succeq\bm{0}\\ \sum_i \bm{U}^i\succeq \bm{R}}}
\sum_{i\in[n]}\langle \bm{H}^i,\bm{U}^i\rangle
=
\max_{\substack{\bm{M}\succeq\bm{0}\\ \bm{M}\preceq \bm{H}^i\ \forall i\in[n]}}
\langle \bm{M},\bm{R}\rangle,
\end{align*}
where strong duality holds because the minimization problem satisfies Slater's constraint qualification. 

When the matrices $\bm{H}^i$ are diagonal and binary, we have
\begin{align*}
\{\bm{M}\succeq\bm{0}:\ \bm{M}\preceq \bm{H}^i\ \forall i\in[n]\} = \{\bm{M}:\ \bm{0}\preceq \bm{M}\preceq \bar{\bm{H}}\}.
\end{align*}
Indeed, let $S^i$ denote the support of the matrix $\bm{H}^i$. By construction, the support of $\bar{\bm{H}}$ is $\bar{S} := \cap_{i \in [n]} S^i$. If $\bm{M}\preceq \bm{H}^i$ for some $i \in [n]$, then the matrix $\bm{M}$ is supported only on the coordinates in $S^i$. Applying this reasoning for all $i \in [n]$, we can say that matrices in $\{\bm{M}\succeq\bm{0}:\ \bm{M}\preceq \bm{H}^i\ \forall i\in[n]\}$ are only supported on coordinates in $\bar{S}$.
Restricted to $\bar{S}$, the diagonal matrices $\bm{H}^i, i\in [n]$ and $\bar{\bm H}$ are all equal to the identity matrix, showing that both sets are equal.
Hence, because \(\bm{R}\succeq\bm{0}\), we must have 
$\phi_H(\bm{R})=\langle \bar{\bm{H}},\bm{R}\rangle$.
Therefore, for fixed $(\bm{X},\bm{Y})$, the optimized $\bm{S}^i$-dependent contribution in \eqref{prob:lowrankrelaxation_reduced} equals
\begin{align*}
\sum_{i\in[n]}\langle \bm{H}^i,\bm{X}_{i,.}\bm{X}_{i,.} ^\top\rangle
+
\langle \bar{\bm H},\bm{X}^\top \bm{Y}^\dagger \bm{X}-\bm{X}^\top \bm{X}\rangle,
\end{align*}
which can be rewritten as
\begin{align*}
\sum_{i\in[n]}\langle \bm{H}^i-\bar{\bm{H}},\bm{X}_{i,.} \bm{X}_{i,.}^\top\rangle+\langle \bar{\bm{H}},\bm{X}^\top \bm{Y}^\dagger \bm{X}\rangle. 
\end{align*}
Finally, minimizing $\langle \bar{\bm{H}}, \bm{\Theta}\rangle$ subject to the Schur complement constraint leads to the objective $\langle \bar{\bm{H}}, \bm{X}^\top \bm{Y}^\dagger \bm{X}\rangle$ since $\bar{\bm{H}} \succeq \bm{0}$. \Halmos
\endproof
}

\subsection{Sparse Plus Low-Rank Matrix Decomposition}\label{ssec:ex.slr}
In this section, we leverage the techniques proposed in this paper to obtain a strong convex relaxation for the sparse plus low-rank matrix decomposition problem:
\begin{align} \label{eq:slr}
\min_{\bm{X}\in \mathbb{R}^{n \times m}, \bm{F} \in \mathbb{R}^{n \times m}} \quad & \Vert \bm{A}-\bm{X}-\bm{F}\Vert_F^2 \quad \text{s.t.} \quad \mathrm{rank}(\bm{X}) \leq k, \ \Vert \bm{F}\Vert_0 \leq s,
\end{align}
where $\bm{A}$ is a data matrix which is to be approximately decomposed into a low-rank matrix $\bm{X}$ plus a sparse matrix $\bm{F}$. We remark that convex relaxations for this problem have been proposed for the special case where the objective is augmented with regularization by \cite{bertsimas2021sparse}. However, their matrix perspective relaxation gives a trivial lower bound of $0$ for \eqref{eq:slr}.

We now demonstrate that a non-trivial convex relaxation can be obtained by leveraging the ideas described in this paper. Specifically, we follow Section \ref{ssec:shorrelax} in introducing a projection matrix $\bm{Y} \in \mathcal{Y}^k_n$ to model the rank of $\bm{X}$, a binary matrix $\bm{Z}$ to model the sparsity of $\bm{F}$, and imposing the bilinear constraints $\bm{X}=\bm{Y}\bm{X}, \bm{F}=\bm{Z}\circ \bm{F}$. This gives the following reformulation of \eqref{eq:slr}:
\begin{align}
    \min_{\bm{Z} \in \{0, 1\}^{n \times m}, \bm{Y}\in \mathcal{Y}^k_n}\ \min_{\bm{X}\in \mathbb{R}^{n \times m}, \bm{F} \in \mathbb{R}^{n \times m}} \ & \Vert \bm{A}-\bm{X}-\bm{F}\Vert_F^2 & \text{s.t.} \ & \bm{X}=\bm{YX}, \bm{F}=\bm{Z}\circ \bm{F}, \mathrm{tr}(\bm{Y}) \leq k, \langle \bm{E}, \bm{Z}\rangle \leq s,
\end{align}
which is nonconvex through quadratic constraints and thus is amenable to a lifted relaxation. Following Proposition \ref{prop:convexrelax}, we vectorize all matrices, introduce a lifted matrix, and relax the constraints $\bm{Z} \in \{0, 1\}^{n \times m}$ and $\bm{Y}\in \mathcal{Y}^k_n$ to obtain the following semidefinite relaxation of \eqref{eq:slr}. 
\begin{align}
\label{eq:slr_shorrelax}    
\min_{\substack{
\bm{X},\bm{F}\in\mathbb{R}^{n\times m},\
\bm{Y}\in \mathrm{Conv}(\mathcal{Y}^k_n),\\
\bm{Z}\in[0,1]^{n\times m}:\ \langle\bm{E},\bm{Z}\rangle\le s,\\
\bm{W}_{x,x},\bm{W}_{y,y},\bm{W}_{f,f},\bm{W}_{z,z}\succeq 0,\\
\bm{W}_{x,y},\bm{W}_{x,f},\bm{W}_{f,z},\bm{W}_{x,z},\bm{W}_{y,f},\bm{W}_{y,z}}}
\quad
& \langle \bm{A},\bm{A}\rangle
-2\langle \bm{A},\bm{X}+\bm{F}\rangle
+\mathrm{tr}(\bm{W}_{x,x})
+\mathrm{tr}(\bm{W}_{f,f})
+2\,\mathrm{tr}(\bm{W}_{x,f})
\\
\text{s.t.}\quad
&
\begin{pmatrix}
1 & \mathrm{vec}(\bm{X}^\top)^\top & \mathrm{vec}(\bm{Y}^\top)^\top & \mathrm{vec}(\bm{F}^\top)^\top & \mathrm{vec}(\bm{Z}^\top)^\top\\
\mathrm{vec}(\bm{X}^\top) & \bm{W}_{x,x} & \bm{W}_{x,y} & \bm{W}_{x,f} & \bm{W}_{x,z}\\
\mathrm{vec}(\bm{Y}^\top) & \bm{W}_{x,y}^\top & \bm{W}_{y,y} & \bm{W}_{y,f} & \bm{W}_{y,z}\\
\mathrm{vec}(\bm{F}^\top) & \bm{W}_{x,f}^\top & \bm{W}_{y,f}^\top & \bm{W}_{f,f} & \bm{W}_{f,z}\\
\mathrm{vec}(\bm{Z}^\top)& \bm{W}_{x,z}^\top & \bm{W}_{y,z}^\top & \bm{W}_{f,z}^\top & \bm{W}_{z,z}
\end{pmatrix}\succeq \bm{0},
\nonumber\\[0.3em]
& \sum_{i\in[n]} \bm{W}_{y,y}^{(i,i)} = \bm{Y}, 
\qquad
\sum_{i\in[n]} \bm{W}_{x,y}^{(i,i)} = \bm{X}^\top,
\nonumber\\[0.3em]
& \mathrm{diag}(\bm{W}_{z,z}) = \mathrm{vec}(\bm{Z}^\top),
\qquad
\mathrm{diag}(\bm{W}_{f,z}) = \mathrm{vec}(\bm{F}^\top).
\nonumber
\end{align}

This relaxation is \textit{not} practically tractable. However, we observe that, for the support variables $\bm{Z}$, only the
\emph{diagonal} product information is needed, and that the Frobenius objective depends only on row-wise quadratic terms, so we can fix all off-diagonal moment blocks without loss of generality, using the same
off-diagonal elimination arguments as in Theorem \ref{thm:shorlowrankequiv} and {\color{black}Theorem \ref{prop:common-core-penalty} (note that Theorem \ref{prop:common-core-penalty} is not directly applicable due to the sparse matrix)}. This gives rise to the following application of Theorem \ref{thm:shorlowrankequiv} to Problem \eqref{eq:slr_shorrelax}, {\color{black}where $\bm{E}$ denotes a matrix of all ones}:
\begin{proposition}\label{prop:slr}
    Problem \eqref{eq:slr_shorrelax} attains the same value as the following compact relaxation:
    \begin{align}
        \label{eq:slr_sdprelax_compact}
\min_{\substack{
\bm{X},\bm{F}, \bm{T}, \bm{V}\in\mathbb{R}^{n\times m}, \
\bm{S}^i\in\mathcal{S}_+^m,\\
\bm{Y}\in\mathrm{Conv}(\mathcal{Y}^k_n),\
\bm{Z}\in[0, 1]^{n \times m}: \langle \bm{E}, \bm{Z}\rangle \leq s}}
\quad
& \langle \bm{A},\bm{A}\rangle
-2\langle \bm{A},\bm{X}+\bm{F}\rangle
+\sum_{i\in[n]} \mathrm{tr}(\bm{S}^i)
+\langle \bm{E}, \bm{T}\rangle
+2\langle \bm{E}, \bm{V}\rangle
\\
\text{s.t.}\quad & \begin{pmatrix}
\sum_{i\in[n]} \bm{S}^i & \bm{X}^\top\\
\bm{X} & \bm{Y}
\end{pmatrix}\succeq \bm{0}, \ \bm{S}^{i}\succeq \bm{X}_{i,.}\bm{X}_{i,.}^\top \ \forall i \in [n],\nonumber\\
& \begin{pmatrix}
T_{ij} & F_{ij}\\
F_{ij} & Z_{ij}
\end{pmatrix}\succeq 0, \ \begin{pmatrix}
1 & X_{ij} & F_{ij}\\
X_{ij} & S^i_{jj} & V_{ij}\\
F_{ij} & V_{ij} & T_{ij}
\end{pmatrix}\succeq 0,\forall (i,j)\in[n]\times[m].\nonumber
\end{align}
\end{proposition}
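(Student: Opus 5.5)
We prove the two inequalities separately. The direction ``\eqref{eq:slr_shorrelax} $\geq$ \eqref{eq:slr_sdprelax_compact}'' is a projection argument, and the reverse direction builds a full moment matrix from a compact solution.

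\emph{Projection step.} Take a feasible point of \eqref{eq:slr_shorrelax}. Define
\begin{align*}
\bm{S}^i &:= \bm{W}_{x,x}^{(i,i)},\\
T_{ij} &:= (\bm{W}_{f,f})_{(ij),(ij)},\\
V_{ij} &:= (\bm{W}_{x,f})_{(ij),(ij)},
\end{align*}
where $(ij)$ indexes entry $(i,j)$ in $\mathrm{vec}(\cdot^\top)$. Then we check each constraint of \eqref{eq:slr_sdprelax_compact}.
\begin{itemize}
\item The objective terms $\mathrm{tr}(\bm{W}_{x,x})$, $\mathrm{tr}(\bm{W}_{f,f})$ and $2\,\mathrm{tr}(\bm{W}_{x,f})$ become $\sum_i\mathrm{tr}(\bm{S}^i)$, $\langle\bm{E},\bm{T}\rangle$ and $2\langle\bm{E},\bm{V}\rangle$, so the objective value is unchanged.
\item $\bm{S}^i\succeq \bm{X}_{i,.}\bm{X}_{i,.}^\top$ holds because it is a principal submatrix of the moment matrix after a Schur complement with the leading $1$.
\item The coupling constraint $\begin{pmatrix}\sum_i\bm{S}^i & \bm{X}^\top\\ \bm{X}&\bm{Y}\end{pmatrix}\succeq\bm{0}$ follows exactly as in the first half of the proof of Theorem~\ref{thm:shorlowrankequiv}: sum the diagonal-block principal submatrices of the $(x,y)$ part and use the two linear identities.
\item The $3\times 3$ constraints on $(1,X_{ij},F_{ij})$ are principal submatrices of the moment matrix.
\item For the $2\times2$ constraint, take the principal submatrix on indices $(f_{ij},z_{ij})$. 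Using $\mathrm{diag}(\bm{W}_{z,z})=\mathrm{vec}(\bm{Z}^\top)$ and $\mathrm{diag}(\bm{W}_{f,z})=\mathrm{vec}(\bm{F}^\top)$, it equals $\begin{pmatrix}T_{ij}&F_{ij}\\F_{ij}&Z_{ij}\end{pmatrix}\succeq 0$.
\end{itemize}

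\emph{Lifting step.} Take a feasible point of \eqref{eq:slr_sdprelax_compact}. First build $\bm{W}_{x,x}$ as in the proof of Theorem~\ref{prop:common-core-penalty}: diagonal blocks $\bm{S}^i$ and off-diagonal blocks $\bm{X}_{i,.}\bm{X}_{j,.}^\top$. This gives $\bm{W}_{x,x}\succeq \mathrm{vec}(\bm{X}^\top)\mathrm{vec}(\bm{X}^\top)^\top$. Then, as in Theorem~\ref{thm:shorlowrankequiv}, replace $\bm{Y}$ by $\bm{X}\bm{\Theta}^\dagger\bm{X}^\top$ with $\bm{\Theta}=\sum_i\bm{S}^i$. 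This is harmless because $\bm{Y}$ does not enter the objective, and the replacement can only help feasibility of $\mathrm{Conv}(\mathcal{Y}^k_n)$, since the new $\bm{Y}$ is dominated by the old one in the PSD order. Set $\bm{U}=\bm{\Theta}^\dagger\bm{X}^\top$, so that $\mathrm{vec}(\bm{Y})=(\bm{I}_n\otimes\bm{U}^\top)\mathrm{vec}(\bm{X}^\top)$.

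The remaining task is to produce the blocks involving $\bm{F},\bm{Z}$ so that the whole $5\times5$ block moment matrix is PSD. The plan is to write the moment matrix as a sum $\bm{M}_1+\bm{M}_2$ of two PSD matrices.
\begin{itemize}
\item $\bm{M}_1$ is the congruence $\bm{L}^\top\bm{N}\bm{L}$ of an ``$(1,x,f)$-moment'' matrix $\bm{N}$, with $\bm{L}=\mathrm{Diag}(1,\bm{I},\bm{I}_n\otimes\bm{U},\bm{I},\cdot)$, mirroring Theorem~\ref{thm:shorlowrankequiv}.
\item The $(f,z)$ coupling is handled entrywise by the diagonal matrices built from the $2\times2$ blocks.
\end{itemize}
Concretely, we choose the off-diagonal moments
\begin{align*}
\bm{W}_{x,f}&=\mathrm{vec}(\bm{X}^\top)\mathrm{vec}(\bm{F}^\top)^\top+\mathrm{Diag}\big(\mathrm{vec}(\bm{V}-\bm{X}\circ\bm{F})^\top\big),\\
\bm{W}_{f,f}&=\mathrm{vec}(\bm{F}^\top)\mathrm{vec}(\bm{F}^\top)^\top+\mathrm{Diag}(\cdot),\\
\bm{W}_{z,z}&=\mathrm{vec}(\bm{Z}^\top)\mathrm{vec}(\bm{Z}^\top)^\top+\mathrm{Diag}\big(\mathrm{vec}(\bm{Z}-\bm{Z}\circ\bm{Z})^\top\big),\\
\bm{W}_{f,z}&=\mathrm{vec}(\bm{F}^\top)\mathrm{vec}(\bm{Z}^\top)^\top+\mathrm{Diag}\big(\mathrm{vec}(\bm{F}-\bm{F}\circ\bm{Z})^\top\big),
\end{align*}
so that every block is ``rank-one part plus a diagonal correction.'' After subtracting the rank-one part $\bm{m}\bm{m}^\top$, with $\bm{m}=(1,\mathrm{vec}\,\bm{X}^\top,\mathrm{vec}\,\bm{Y},\mathrm{vec}\,\bm{F}^\top,\mathrm{vec}\,\bm{Z}^\top)$, PSD-ness reduces to PSD-ness of the residual covariance.

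\emph{Main obstacle.} The hard step is showing that this residual covariance is PSD. Its $(x,y)$ part is handled by the congruence of Theorem~\ref{thm:shorlowrankequiv}. The difficulty is coupling the non-diagonal $x$-residual $\mathrm{Diag}_i(\bm{S}^i-\bm{X}_{i,.}\bm{X}_{i,.}^\top)$ with the diagonal $f,z$ residuals. Here the per-entry $3\times3$ and $2\times2$ constraints only control the diagonal entries $S^i_{jj}$. To close this gap, we will set the cross-residuals between $x$ and $y$ and the $(f,z)$ variables to zero, except for the $(ij)$-diagonal entries. We then invoke a lemma that a matrix of the form $\begin{pmatrix}\bm{R}&\bm{C}\\\bm{C}^\top&\bm{\Delta}\end{pmatrix}$, with $\bm{\Delta}$ diagonal and $\bm{C}$ diagonal, is PSD as soon as $\bm{R}-\bm{C}\bm{\Delta}^\dagger\bm{C}^\top\succeq 0$. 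We would verify this condition by splitting $\bm{R}$ into its diagonal entries, which absorb the $3\times3$ slack, plus a PSD remainder. If this splitting fails, the fallback is to modify the constructed $\bm{W}_{x,x}$ within its diagonal blocks, increasing the objective by zero, by exploiting freedom in the off-diagonal entries of $\bm{S}^i$.
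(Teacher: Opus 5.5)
Your projection step (full $\ge$ compact) is correct and coincides with the paper's argument. The lifting step has a genuine gap. Choosing $\bm W_{x,f}$ and $\bm W_{f,f}$ as ``rank-one part plus diagonal correction'' does not give a PSD moment matrix in general, and the verification you sketch cannot succeed.

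Let $\bm\Sigma$ be block diagonal with blocks $\bm S^i-\bm X_{i,.}\bm X_{i,.}^\top$, and let $\bm C,\bm D_T$ be diagonal with entries $V_{ij}-X_{ij}F_{ij}$ and $T_{ij}-F_{ij}^2$. Your construction needs $\bm\Sigma-\bm C\bm D_T^\dagger\bm C\succeq\bm 0$. The $3\times 3$ LMIs only say that this diagonal matrix is dominated by $\mathrm{diag}(\bm\Sigma)$, and a PSD matrix minus a diagonal matrix dominated by its own diagonal need not be PSD.

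A concrete counterexample: take $n=1$, $m=2$, $\bm X=\bm 0$, $\bm Y=0$, $\bm F=\bm Z=\bm 0$, $\bm S^1=\bm J:=\begin{pmatrix}1&1\\1&1\end{pmatrix}$, $T_{1j}=1$ and $V_{1j}=-1$. This point is feasible in \eqref{eq:slr_sdprelax_compact}. Yet your $(x,f)$ residual block is $\begin{pmatrix}\bm J&-\bm I_2\\-\bm I_2&\bm I_2\end{pmatrix}$, whose Schur complement $\bm J-\bm I_2$ is indefinite. Worse, $\bm W_{x,x}=\bm J$ is forced here and $\bm J$ has kernel spanned by $(1,-1)^\top$. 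So PSD-ness forces any diagonal $(x,f)$ residual to vanish, giving $2\operatorname{tr}(\bm W_{x,f})=0$ instead of $2\sum_j V_{1j}=-4$; no re-tuning of diagonal values rescues the construction.

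Your fallback of editing off-diagonal entries of the $\bm S^i$ changes $\bm\Theta=\sum_i\bm S^i$. Nothing in your argument then controls $\operatorname{tr}(\bm X\bm\Theta^\dagger\bm X^\top)\le k$. Separately, zeroing the $(y,f)$ cross-residual clashes with the construction of Theorem~\ref{thm:shorlowrankequiv}. There the $(x,y)$ residual is $[\bm I,\bm K]^\top\bm\Sigma[\bm I,\bm K]$ with $\bm K=\bm I_n\otimes\bm U$, so PSD-ness would force $\bm K^\top\bm C=\bm 0$.

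The missing idea is this: \eqref{eq:slr_shorrelax} pins down only the $(ij)$-diagonal entries of $\bm W_{x,f},\bm W_{f,f},\bm W_{f,z},\bm W_{z,z}$, and $\bm W_{x,z},\bm W_{y,f},\bm W_{y,z}$ are completely free. So the off-diagonal residuals should be correlated with the $x$-residuals, not set to zero. Define $\alpha_{ij}:=(V_{ij}-X_{ij}F_{ij})/(S^i_{jj}-X_{ij}^2)$, setting $\alpha_{ij}=0$ if the denominator vanishes; in that case the $3\times3$ LMI forces $V_{ij}=X_{ij}F_{ij}$. Define $\beta_{ij}^2:=T_{ij}-F_{ij}^2-\alpha_{ij}^2(S^i_{jj}-X_{ij}^2)$, which is nonnegative exactly by the $3\times 3$ LMI.

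Then take the $(x,f)$, $(y,f)$ and $(f,f)$ residuals to be
\begin{itemize}
\item $\bm\Sigma\,\mathrm{Diag}(\bm\alpha)$,
\item $\bm K^\top\bm\Sigma\,\mathrm{Diag}(\bm\alpha)$,
\item $\mathrm{Diag}(\bm\alpha)\bm\Sigma\,\mathrm{Diag}(\bm\alpha)+\mathrm{Diag}(\beta_{ij}^2)$.
\end{itemize}
Treat $z$ the same way relative to $f$, with coefficient $a_{ij}=F_{ij}(1-Z_{ij})/(T_{ij}-F_{ij}^2)$ and fresh variance $b_{ij}^2=(1-Z_{ij})(Z_{ij}T_{ij}-F_{ij}^2)/(T_{ij}-F_{ij}^2)\ge 0$; nonnegativity uses the $2\times 2$ LMI and $Z_{ij}\le 1$.

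The resulting residual is a Gram matrix, hence PSD, and it has the required diagonals. Your $\bm M_1+\bm M_2$ split is the right shape, provided $\bm M_1$ is a congruence of $\bm\Sigma$ that also carries the $f,z$ coordinates, and only $\bm M_2$ is diagonal. This is exactly the paper's construction. The paper phrases it with vectors $q_{ij}=F_{ij}g_0+\alpha_{ij}r_{ij}+\beta_{ij}e_{ij}$ and $z_{ij}=Z_{ij}g_0+a_{ij}(q_{ij}-F_{ij}g_0)+b_{ij}\tilde e_{ij}$, appended to a Gram factorization of the $(1,x,y)$ moment matrix, with fresh orthogonal directions $e_{ij},\tilde e_{ij}$. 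In the example above it yields the PSD $(x,f)$ residual $\begin{pmatrix}\bm J&-\bm J\\-\bm J&\bm J\end{pmatrix}$ with the correct diagonals $V_{1j}=-1$ and $T_{1j}=1$.
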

\begin{remark}
    If only entries $\Omega\subseteq[n]\times[m]$ are observed in $\bm{A}$, we can replace the original objective function by $\Vert\mathcal{P}_\Omega(\bm{A}-\bm{X}-\bm{F})\Vert_F^2$ as in Section \ref{ssec:ex.mc}. The convex relaxation \eqref{eq:slr_sdprelax_compact} can be applied to this modified problem after modifying \eqref{eq:slr_sdprelax_compact}'s objective function 
    with row masks $\bm{H}^i$ as in Section \ref{ssec:ex.mc}. Further, these row masks allow us to make the relaxation more tractable by omitting the variables $V_{i,j}, T_{i,j}, Z_{i,j}$ for $(i,j) \notin \Omega$ and corresponding $2\times 2$ and $3 \times 3$ minors.
\end{remark}

\proof{Proof of Proposition \ref{prop:slr}}
We show that for any feasible solution to \eqref{eq:slr_shorrelax}, there exists a feasible solution to \eqref{eq:slr_sdprelax_compact} with the same objective value and vice versa.

To prove the result, we require additional notation. Namely, since \(\mathrm{vec}(\bm{X}^\top)\) stacks the rows of \(\bm{X}\), we index its entries by $\ell(i,j) := (i-1)m + j,\ (i,j)\in[n]\times[m],$
so that \(\mathrm{vec}(\bm{X}^\top)_{\ell(i,j)} = X_{ij}\).

\textbf{Problem \eqref{eq:slr_sdprelax_compact} is a relaxation of Problem \eqref{eq:slr_shorrelax}:} Fix a solution to \eqref{eq:slr_shorrelax} and define
\[
\bm{S}^i := \bm{W}_{x,x}^{(i,i)}\in\mathcal{S}_+^m,\quad i\in[n],\qquad
T_{ij} := (\bm{W}_{f,f})_{\ell(i,j),\ell(i,j)},\qquad
V_{ij} := (\bm{W}_{x,f})_{\ell(i,j),\ell(i,j)}.
\]
We claim that \((\bm{X},\bm{F},\bm{Y},\bm{Z},\{\bm{S}^i\}_{i\in[n]},\bm{T},\bm{V})\) is feasible for \eqref{eq:slr_sdprelax_compact} and attains the same objective value. First, the fact that the constraints 
\begin{align}\nonumber
\begin{pmatrix}
\sum_{i\in[n]} \bm{W}_{x,x}^{(i,i)} & \sum_{i\in[n]}\bm{W}_{x,y}^{(i,i)}\\[0.2em]
\left(\sum_{i\in[n]}\bm{W}_{x,y}^{(i,i)}\right)^\top & \sum_{i\in[n]}\bm{W}_{y,y}^{(i,i)}
\end{pmatrix}\succeq 0, \begin{pmatrix}
\sum_{i\in[n]} \bm{S}^i & \bm{X}^\top\\
\bm{X} & \bm{Y}
\end{pmatrix}\succeq 0, \bm{S}^i=\bm{W}_{x,x}^{(i,i)}\succeq \bm{X}_{i,.}\bm{X}_{i,.}^\top
\end{align}
hold follows identically to the proof of Theorem \ref{thm:shorlowrankequiv}. 

Second, for any \((i,j)\), consider the principal submatrix of the lifted PSD constraint indexed by
\((F_{ij},Z_{ij})\), i.e., the entries \(\ell(i,j)\) in \(\mathrm{vec}(\bm{F}^\top)\) and \(\mathrm{vec}(\bm{Z}^\top)\). This gives
\begin{align*}
\begin{pmatrix}
(\bm{W}_{f,f})_{\ell(i,j),\ell(i,j)} & (\bm{W}_{f,z})_{\ell(i,j),\ell(i,j)}\\
(\bm{W}_{f,z})_{\ell(i,j),\ell(i,j)} & (\bm{W}_{z,z})_{\ell(i,j),\ell(i,j)}
\end{pmatrix}\succeq 0.
\end{align*}
By the diagonal constraints in \eqref{eq:slr_shorrelax},
\((\bm{W}_{f,z})_{\ell(i,j),\ell(i,j)} = F_{ij}\) and \((\bm{W}_{z,z})_{\ell(i,j),\ell(i,j)} = Z_{ij}\).
Hence
\[
\begin{pmatrix}
T_{ij} & F_{ij}\\
F_{ij} & Z_{ij}
\end{pmatrix}\succeq 0,
\]
which is exactly the second set of LMIs in \eqref{eq:slr_sdprelax_compact}. 

Third, for any \((i,j)\), the principal submatrix of the lifted PSD constraint indexed by \((1,X_{ij},F_{ij})\),
i.e., \((1,\mathrm{vec}(\bm{X}^\top)_{\ell(i,j)},\mathrm{vec}(\bm{F}^\top)_{\ell(i,j)})\), yields
\[
\begin{pmatrix}
1 & X_{ij} & F_{ij}\\
X_{ij} & (\bm{W}_{x,x})_{\ell(i,j),\ell(i,j)} & (\bm{W}_{x,f})_{\ell(i,j),\ell(i,j)}\\
F_{ij} & (\bm{W}_{x,f})_{\ell(i,j),\ell(i,j)} & (\bm{W}_{f,f})_{\ell(i,j),\ell(i,j)}
\end{pmatrix}\succeq 0.
\]
Using \((\bm{W}_{x,x})_{\ell(i,j),\ell(i,j)} = (\bm{S}^i)_{jj}\), the definitions of \(T_{ij}\) and \(V_{ij}\),
we obtain
\[
\begin{pmatrix}
1 & X_{ij} & F_{ij}\\
X_{ij} & S^i_{jj} & V_{ij}\\
F_{ij} & V_{ij} & T_{ij}
\end{pmatrix}\succeq 0,
\]
which proves \((\bm{X},\bm{F},\bm{Y},\bm{Z},\{\bm{S}^i\}_{i\in[n]},\bm{T},\bm{V})\) is indeed feasible for \eqref{eq:slr_sdprelax_compact} with the same objective.

\textbf{Problem \eqref{eq:slr_shorrelax} is a relaxation of \eqref{eq:slr_sdprelax_compact}:} Fix a feasible solution \((\bm{X},\bm{F},\bm{Y},\bm{Z},\{\bm{S}^i\},\bm{T},\bm{V})\)
to \eqref{eq:slr_sdprelax_compact}, with a view to construct a solution to \eqref{eq:slr_shorrelax} with the same objective value. 

First, define \(\bm{W}_{x,x}\in\mathcal{S}^{nm}_+\) blockwise via
$\bm{W}_{x,x}^{(i,i)} := \bm{S}^i,\ \bm{W}_{x,x}^{(i,k)} := \bm{X}_{i,.}\bm{X}_{k,.}^\top\ \ (i\neq k).$
Then, $\bm{W}_{x,x}$ is positive semidefinite by construction and \(\mathrm{tr}(\bm{W}_{x,x})=\sum_i \mathrm{tr}(\bm{S}^i)\). \

Second, let \(\bm{\Theta}:=\sum_{i\in[n]}\bm{S}^i\).
From \eqref{eq:slr_sdprelax_compact} 
we have \(\bm{Y}\succeq \bm{X}\bm{\Theta}^\dagger \bm{X}^\top\).
Since \(\bm{Y}\) does not appear in the objective, we may replace \(\bm{Y}\) by
\(\widetilde{\bm{Y}}:=\bm{X}\bm{\Theta}^\dagger \bm{X}^\top\preceq \bm{Y}\), which preserves feasibility
in \(\mathrm{Conv}(\mathcal{Y}^k_n)\). Therefore, define \(\bm{U}:=\bm{\Theta}^\dagger \bm{X}^\top\in\mathbb{R}^{m\times n}\) and set the diagonal blocks $\bm{W}_{x,y}^{(i,i)} := \bm{S}^i \bm{U},\ \bm{W}_{y,y}^{(i,i)} := \bm{U}^\top \bm{S}^i \bm{U}.$
Then \(\sum_i \bm{W}_{x,y}^{(i,i)} = \bm{X}^\top\) and \(\sum_i \bm{W}_{y,y}^{(i,i)} = \widetilde{\bm{Y}}\).
By Theorem \ref{thm:shorlowrankequiv}'s construction, we can complete the remaining (off-diagonal) blocks of
\(\bm{W}_{x,y}\) and \(\bm{W}_{y,y}\) so the lifted PSD constraint on \((1,\mathrm{vec}(\bm{X}^\top),\mathrm{vec}(\bm{Y}^\top))\) holds. 

{\color{black}
Third, let 
\[
\bm{M}_{x,y}:=
\begin{pmatrix}
1 & \mathrm{vec}(\bm{X}^\top)^\top & \mathrm{vec}(\tilde{\bm{Y}}^\top)^\top\\
\mathrm{vec}(\bm{X}^\top) & \bm{W}_{x,x} & \bm{W}_{x,y}\\
\mathrm{vec}(\tilde{\bm{Y}}^\top) & \bm{W}_{x,y}^\top & \bm{W}_{y,y}
\end{pmatrix}\succeq \bm{0}
\]
denote the positive semidefinite matrix constructed above, and let
\[
g_0,\ \{g_{\ell(i,j)}\}_{(i,j)\in[n]\times[m]},\ \{h_q\}_{q=1}^{n^2}
\]
be vectors whose Gram matrix equals $\bm{M}_{x,y}$. In particular,
\[
\langle g_0,g_{\ell(i,j)}\rangle=X_{ij},\qquad \|g_{\ell(i,j)}\|^2=(S^i)_{jj}.
\]
For each $(i,j)$, write
\[
r_{ij}:=g_{\ell(i,j)}-X_{ij}g_0,
\]
so that $r_{ij}\perp g_0$ and $\|r_{ij}\|^2=(S^i)_{jj}-X_{ij}^2$.

Next, fix $(i,j)\in[n]\times[m]$. Since
\[
\begin{pmatrix}
1 & X_{ij} & F_{ij}\\
X_{ij} & (S^i)_{jj} & V_{ij}\\
F_{ij} & V_{ij} & T_{ij}
\end{pmatrix}\succeq 0,
\]
there exists a vector $q_{ij}$ such that
\[
\langle g_0,q_{ij}\rangle=F_{ij},\qquad 
\langle g_{\ell(i,j)},q_{ij}\rangle=V_{ij},\qquad 
\|q_{ij}\|^2=T_{ij}.
\]
For example, if $\|r_{ij}\|^2>0$, one may take
\[
q_{ij}=F_{ij}g_0+\alpha_{ij}r_{ij}+\beta_{ij}e_{ij},
\qquad
\alpha_{ij}:=\frac{V_{ij}-X_{ij}F_{ij}}{\|r_{ij}\|^2},
\]
where $e_{ij}$ is a unit vector orthogonal to all previously defined vectors, and
\[
\beta_{ij}:=\sqrt{T_{ij}-F_{ij}^2-\alpha_{ij}^2\|r_{ij}\|^2},
\]
which is well-defined by the above $3\times 3$ LMI. If $\|r_{ij}\|^2=0$, then the same LMI implies
$V_{ij}=X_{ij}F_{ij}$, and we may take
\[
q_{ij}=F_{ij}g_0+\sqrt{T_{ij}-F_{ij}^2}\,e_{ij}.
\]

Now define
\[
u_{ij}:=q_{ij}-F_{ij}g_0,
\]
so that $u_{ij}\perp g_0$ and $\|u_{ij}\|^2=T_{ij}-F_{ij}^2$. Since
\[
\begin{pmatrix}
T_{ij} & F_{ij}\\
F_{ij} & Z_{ij}
\end{pmatrix}\succeq 0
\qquad\text{and}\qquad 0\le Z_{ij}\le 1,
\]
there exists a vector $z_{ij}$ such that
\[
\langle g_0,z_{ij}\rangle=Z_{ij},\qquad
\langle q_{ij},z_{ij}\rangle=F_{ij},\qquad
\|z_{ij}\|^2=Z_{ij}.
\]
Indeed, if $\|u_{ij}\|^2>0$, we may take
\[
z_{ij}=Z_{ij}g_0+a_{ij}u_{ij}+b_{ij}\tilde e_{ij},
\qquad
a_{ij}:=\frac{F_{ij}(1-Z_{ij})}{\|u_{ij}\|^2},
\]
where $\tilde e_{ij}$ is a unit vector orthogonal to all previously defined vectors, and
\[
b_{ij}:=\sqrt{Z_{ij}-Z_{ij}^2-a_{ij}^2\|u_{ij}\|^2},
\]
which is well-defined because
\[
Z_{ij}-Z_{ij}^2-a_{ij}^2\|u_{ij}\|^2
=
\frac{(1-Z_{ij})(Z_{ij}T_{ij}-F_{ij}^2)}{T_{ij}-F_{ij}^2}\ge 0.
\]
If $\|u_{ij}\|^2=0$, then $q_{ij}=F_{ij}g_0$, and we may take
\[
z_{ij}=Z_{ij}g_0+\sqrt{Z_{ij}-Z_{ij}^2}\,\tilde e_{ij},
\]
noting that in this case $\langle q_{ij},z_{ij}\rangle=F_{ij}$ follows from
$F_{ij}^2=T_{ij}\le T_{ij}Z_{ij}=F_{ij}^2Z_{ij}$.

Finally, define the remaining lifted blocks so that the full lifted matrix in \eqref{eq:slr_shorrelax}
is the Gram matrix of
\[
\{g_0\}\cup \{g_{\ell(i,j)}\}_{(i,j)}\cup \{h_q\}_{q=1}^{n^2}
\cup \{q_{ij}\}_{(i,j)}\cup \{z_{ij}\}_{(i,j)}.
\]
This matrix is positive semidefinite by construction, and satisfies
\[
\operatorname{diag}(W_{f,f})=\vec(T^\top),\qquad
\operatorname{diag}(W_{f,z})=\vec(F^\top),\qquad
\operatorname{diag}(W_{z,z})=\vec(Z^\top),
\]
and also satisfies $\operatorname{tr}(W_{x,f})=\langle E,V\rangle$. Since the blocks
$W_{x,z},W_{y,f},W_{y,z}$ are unconstrained in \eqref{eq:slr_shorrelax}, the Gram values assigned to them are admissible.
Therefore, the constructed solution is feasible for \eqref{eq:slr_shorrelax} and has the same objective value as in \eqref{eq:slr_sdprelax_compact}.}
\Halmos \endproof

\subsection{Example: Basis Pursuit}\label{ssec:basispursuit}

Given a sample $\{A_{i,j}, (i,j)\in \Omega \subseteq [n] \times [m]\}$ of an \textit{exactly} low-rank matrix $\bm{A} \in \mathbb{R}^{n \times m}$, the goal of the low-rank basis pursuit problem is to recover the lowest rank matrix $\bm{X}$ that exactly matches all observed entries of $\bm{A}$ \citep{candes2009exact}. This problem admits the formulation: 
\begin{align}\label{prob:lrmbp_orig}
    \min_{\bm{Y} \in \mathcal{Y}_n }\min_{\bm{X}\in \mathbb{R}^{n \times m}} \quad &  \mathrm{tr}(\bm{Y}) \ \text{s.t.} \quad \mathcal{P}(\bm{A})=\mathcal{P}(\bm{X}), \bm{X}=\bm{Y}\bm{X},
\end{align}
where {\color{black}$\mathcal{Y}_n := \mathcal{Y}_n^n$ denote the set of $n \times n$ orthogonal projection matrices of any rank, and}
$\mathcal{P}(\bm{A})$ denotes a linear map that masks the hidden entries of $\bm{A}, \bm{X}$ such that $\mathcal{P}(\bm{A})_{i,j}=A_{i,j}$ if $(i,j) \in \Omega$ and $0$ otherwise. Following Theorem \ref{thm:shorlowrankequiv} and {\color{black}pairwise multiplying} the constraints $A_{i,j} - X_{i,j} = 0, \forall (i,j) \in \Omega$  leads to the following relaxation 
\begin{equation} \label{prob:basispursuit_shorrelax}
\begin{aligned}
    \min_{\bm{Y} \in \mathrm{Conv}(\mathcal{Y}_n) } \: \min_{\bm{X}\in \mathbb{R}^{n \times m}, \bm{W} \in \mathcal{S}^{nm}_+} & \mathrm{tr}(\bm{Y}) \\
    \text{s.t.} \quad
    & A_{i,j} A_{k,\ell} -A_{k,\ell} X_{i,j} -A_{i,j} X_{k,\ell} + (\bm{W}^{(i,k)})_{j,\ell} = 0, \forall (i,j), (k,\ell) \in \Omega \times \Omega \\
    & A_{i,j}=X_{i,j}, \forall (i,j) \in \Omega\\
    & \bm{W}\succeq \mathrm{vec}(\bm{X}^\top)\mathrm{vec}(\bm{X}^\top)^\top, \ \begin{pmatrix} \sum_{i \in [n]}\bm{W}^{(i,i)} & \bm{X}^\top \\ \bm{X} & \bm{Y}\end{pmatrix} \succeq \bm{0},
\end{aligned}
\end{equation}


Similarly to the low-rank matrix completion case, the structure of the compact lifted relaxation means that 
{\color{black}if we set $\bm{W}_{x,x}^{(i,j)}=\bm{X}_{i,.}\bm{X}_{j,.}^\top$ for $i \neq j$ then $X_{i,j}=A_{i,j}$ and $X_{k,l}=A_{k,l}$ for observed entries $(i,j), (k,l) \in \Omega$. Hence, any constraints involving the off-diagonal blocks of $\bm{W}_{x,x}$ are satisfied by the construction in the proof of Theorem \ref{prop:common-core-penalty}, and we have the following corollary to it}:
\begin{corollary}\label{corr:basispursuit1}
Problem \eqref{prob:basispursuit_shorrelax} attains the same objective value as
\begin{equation} \label{prob:basispursuit_compact}
\begin{aligned}
    \min_{\bm{Y} \in \mathrm{Conv}(\mathcal{Y}_n) } \: \min_{\bm{X}\in \mathbb{R}^{n \times m},\ \bm{S}^i \in \mathcal{S}^{m}_+, i\in[n]} & \mathrm{tr}(\bm{Y}) \\
    \text{s.t.} \quad
    & A_{i,j} A_{i,\ell} -A_{i,\ell} X_{i,j} -A_{i,j} X_{i,\ell} + (\bm{S}^{i})_{j,\ell} = 0, \forall (i,j), (i,\ell) \in \Omega \times \Omega \\
    & A_{i,j}=X_{i,j}, \forall (i,j) \in \Omega\\
    & \bm{S}^i\succeq \bm{X}_{i,\cdot}  \bm{X}_{i,\cdot}^\top, \ \begin{pmatrix} \sum_{i \in [n]}\bm{S}^{i} & \bm{X}^\top \\ \bm{X} & \bm{Y}\end{pmatrix} \succeq \bm{0},
\end{aligned}
\end{equation}
where $\bm{X}_{i,\cdot}$ denotes a column vector containing the $i$th row of $\bm{X}$.
\end{corollary}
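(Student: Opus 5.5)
We show that each problem's feasible set maps into the other's with the same objective value $\operatorname{tr}(\bm{Y})$. This mirrors the proof of Theorem \ref{prop:common-core-penalty}. The only new ingredient is the family of lifted equality constraints coupling different rows, which we must check are satisfied by the completion used there.

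\textbf{Direction 1: \eqref{prob:basispursuit_compact} is no larger than \eqref{prob:basispursuit_shorrelax}.} Take $(\bm{X},\bm{Y},\bm{W})$ feasible in \eqref{prob:basispursuit_shorrelax} and set $\bm{S}^i:=\bm{W}^{(i,i)}$.
\begin{itemize}
\item $\bm{W}\succeq\mathrm{vec}(\bm{X}^\top)\mathrm{vec}(\bm{X}^\top)^\top$ implies that each diagonal block satisfies $\bm{S}^i\succeq \bm{X}_{i,\cdot}\bm{X}_{i,\cdot}^\top$. Diagonal blocks of a PSD difference are PSD.
\item The coupling constraint with $\bm{Y}$ is literally the same in both problems.
\item The equality constraints of \eqref{prob:basispursuit_compact} are the subset of those in \eqref{prob:basispursuit_shorrelax} with $k=i$.
\item $\bm{X}$ and $\bm{Y}$ are unchanged, so the objective is unchanged.
\end{itemize}

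\textbf{Direction 2: \eqref{prob:basispursuit_shorrelax} is no larger than \eqref{prob:basispursuit_compact}.} Take $(\bm{X},\bm{Y},\{\bm{S}^i\})$ feasible in \eqref{prob:basispursuit_compact}. Define $\bm{W}$ blockwise by
\begin{align*}
\bm{W}^{(i,i)}:=\bm{S}^i, \qquad \bm{W}^{(i,k)}:=\bm{X}_{i,\cdot}\bm{X}_{k,\cdot}^\top \quad (i\neq k),
\end{align*}
exactly as in the proof of Theorem \ref{prop:common-core-penalty}.
\begin{itemize}
\item $\bm{W}-\mathrm{vec}(\bm{X}^\top)\mathrm{vec}(\bm{X}^\top)^\top$ is block diagonal with PSD blocks $\bm{S}^i-\bm{X}_{i,\cdot}\bm{X}_{i,\cdot}^\top$. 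Hence $\bm{W}\succeq \mathrm{vec}(\bm{X}^\top)\mathrm{vec}(\bm{X}^\top)^\top$, and in particular $\bm{W}\in\mathcal{S}^{nm}_+$.
\item $\sum_i\bm{W}^{(i,i)}=\sum_i\bm{S}^i$, so the coupling LMI with $\bm{Y}$ is inherited directly.
\item The linear constraints $A_{i,j}=X_{i,j}$ on $\Omega$ are inherited directly.
\end{itemize}

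\textbf{The lifted equalities.} It remains to verify them for all pairs $(i,j),(k,\ell)\in\Omega\times\Omega$; this is the only step where anything new happens. Split by row.
\begin{itemize}
\item If $k=i$, then $(\bm{W}^{(i,i)})_{j,\ell}=(\bm{S}^i)_{j,\ell}$, and the constraint is exactly one of those imposed in \eqref{prob:basispursuit_compact}.
\item If $k\neq i$, then $(\bm{W}^{(i,k)})_{j,\ell}=X_{i,j}X_{k,\ell}$. Substituting $X_{i,j}=A_{i,j}$ and $X_{k,\ell}=A_{k,\ell}$, which hold because both entries lie in $\Omega$, gives
\begin{align*}
A_{i,j}A_{k,\ell}-A_{k,\ell}A_{i,j}-A_{i,j}A_{k,\ell}+A_{i,j}A_{k,\ell}=0.
\end{align*}
So the constraint holds automatically.
\end{itemize}

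The main point of care is exactly this off-diagonal-row case. The construction works only because the rank-one off-diagonal completion is consistent with the product of the observed linear equalities, and Direction 2 uses this; Direction 1 does not need it. The constructed point is therefore feasible for \eqref{prob:basispursuit_shorrelax} with the same $\bm{Y}$, hence the same objective. Combining the two directions gives equality of the optimal values.
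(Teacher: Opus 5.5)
Your proof is correct and matches the paper's argument. Direction 1 takes the diagonal blocks of $\bm{W}$. Direction 2 reuses the completion $\bm{W}^{(i,k)}=\bm{X}_{i,\cdot}\bm{X}_{k,\cdot}^\top$ from the proof of Theorem~\ref{prop:common-core-penalty}, and the cross-row lifted equalities hold automatically because $X_{i,j}=A_{i,j}$ on $\Omega$; your write-up just spells out both directions, which the paper only sketches.
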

\begin{remark}
    The number of linear equality constraints in \eqref{prob:basispursuit_compact} grows quadratically in $\vert \Omega\vert$, thus, in practice, one may subsample or aggregate these constraints to improve the tractability of the semidefinite relaxation.
\end{remark}

\subsection{Example: Low-Rank Factor Analysis}
\label{sec:ex.fa}
In this section, we apply the lifted relaxation technique developed in Section \ref{ssec:shorrelax}
to derive a strong convex relaxation for low-rank factor analysis as explored in \cite{bertsimas2017certifiably}:
\begin{align}
\label{eq:fa}
\min_{\bm{X}\in\mathcal{S}^{n}_+,\ \bm{D}\in\mathcal{D}} \quad
& \|\bm{A}-\bm{X}-\bm{D}\|_F^2
\quad \text{s.t.}\quad \mathrm{rank}(\bm{X})\le k,
\end{align}
where $\bm{A}\in\mathcal{S}^{n}_{++}$ is a covariance matrix which is to be approximately decomposed into a positive semidefinite low-rank matrix $\bm{X}$ plus a diagonal matrix $\bm{D}$, which is contained in a convex set $\mathcal{D}$ (e.g., $\mathcal{D}=\{\mathrm{Diag}(\bm{d}): \bm{d}\ge \bm{0}\}$. Note that \eqref{eq:fa} is nonconvex \emph{only} through the rank constraint.

As in Section~\ref{ssec:shorrelax}, we introduce a projection matrix
$\bm{Y}\in\mathcal{Y}^k_n$ and impose $\bm{X}=\bm{Y}\bm{X}$ to model the rank constraint. This yields
the equivalent mixed-projection formulation
\begin{align}
\label{eq:fa_mixed}
\min_{\bm{Y}\in\mathcal{Y}^k_n}\ \min_{\bm{X}\in\mathcal{S}^{n}_+,\ \bm{D}\in\mathcal{D}} \quad
& \|\bm{A}-\bm{X}-\bm{D}\|_F^2
\quad \text{s.t.}\quad \bm{X}=\bm{Y}\bm{X}.
\end{align}

As in previous examples, we now develop a semidefinite relaxation by vectorizing and lifting. This yields the following relaxation:
\begin{align}
\label{eq:fa_shorrelax}
\min_{\substack{
\bm{X}\in\mathcal{S}^{n}_+,\ \bm{d}\in\mathbb{R}^n,\ \bm{Y}\in \mathrm{Conv}(\mathcal{Y}^k_n),\\
\mathrm{Diag}(\bm{d})\in\mathcal{D},\ \bm{W}_{x,x},\bm{W}_{y,y}\in\mathcal{S}_+^{n^2},\\ \bm{W}_{d,d}\in\mathcal{S}_+^{n},\ \bm{W}_{x,y}\in\mathbb{R}^{n^2\times n^2},\\ \bm{W}_{x,d}\in\mathbb{R}^{n^2\times n},\ \bm{W}_{y,d}\in\mathbb{R}^{n^2\times n}}}
\quad
& \langle \bm{A},\bm{A}\rangle
-2\langle \bm{A},\bm{X}\rangle
-2\,\mathrm{diag}(\bm{A})^\top \bm{d}
+\mathrm{tr}(\bm{W}_{x,x})
+\mathrm{tr}(\bm{W}_{d,d})
+2\langle \bm{P},\bm{W}_{x,d}\rangle
\\
\text{s.t.}\quad
&
\begin{pmatrix}
1 & \mathrm{vec}(\bm{X}^\top)^\top & \mathrm{vec}(\bm{Y}^\top)^\top & \bm{d}^\top\\
\mathrm{vec}(\bm{X}^\top) & \bm{W}_{x,x} & \bm{W}_{x,y} & \bm{W}_{x,d}\\
\mathrm{vec}(\bm{Y}^\top) & \bm{W}_{x,y}^\top & \bm{W}_{y,y} & \bm{W}_{y,d}\\
\bm{d} & \bm{W}_{x,d}^\top & \bm{W}_{y,d}^\top & \bm{W}_{d,d}
\end{pmatrix}\succeq \bm{0},
\nonumber\\[0.3em]
& \sum_{i\in[n]} \bm{W}_{y,y}^{(i,i)} = \bm{Y},
\qquad
\sum_{i\in[n]} \bm{W}_{x,y}^{(i,i)} = \bm{X}^\top,
\nonumber
\end{align}
where, to simplify notation, we define the diagonal selector $\bm{P}\in\mathbb{R}^{n^2\times n}$
by $\mathrm{vec}(\mathrm{Diag}(\bm{d}))=\bm{P}\bm{d}$.

We now eliminate most lifted blocks without loss of generality, analogously to Theorem~\ref{thm:shorlowrankequiv} and Theorem~\ref{prop:common-core-penalty} {(\color{black}note that Theorem \ref{prop:common-core-penalty} is not directly applicable due to the diagonal matrix)}. Formally, we have the following result:

\begin{proposition}\label{prop:fa}
Problem \eqref{eq:fa_shorrelax} attains the same value as the following compact relaxation:
\begin{align}
\label{eq:fa_sdprelax_compact}
\min_{\substack{
\bm{X}\in\mathcal{S}^{n}_+,\ \bm{d}\in\mathbb{R}^n,\ \bm{t},\bm{v}\in\mathbb{R}^n,\\
\bm{S}^i\in\mathcal{S}_+^n,\ \bm{Y}\in\mathrm{Conv}(\mathcal{Y}^k_n),\ \mathrm{Diag}(\bm{d})\in\mathcal{D}}}
\quad
& \langle \bm{A},\bm{A}\rangle
-2\langle \bm{A},\bm{X}\rangle
-2\,\mathrm{diag}(\bm{A})^\top \bm{d}
+\sum_{i\in[n]} \mathrm{tr}(\bm{S}^i)
+\bm{e}^\top \bm{t}
+2\,\bm{e}^\top \bm{v}
\\
\text{s.t.}\quad
&
\begin{pmatrix}
\sum_{i\in[n]} \bm{S}^i & \bm{X}^\top\\
\bm{X} & \bm{Y}
\end{pmatrix}\succeq \bm{0},
\qquad
\bm{S}^{i}\succeq \bm{X}_{i,.}\bm{X}_{i,.}^\top\ \ \forall i \in [n],
\nonumber\\
&
\begin{pmatrix}
1 & X_{ii} & d_i\\
X_{ii} & S^i_{ii} & v_i\\
d_i & v_i & t_i
\end{pmatrix}\succeq \bm{0},
\qquad \forall i\in[n].
\nonumber
\end{align}
\end{proposition}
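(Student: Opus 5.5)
We will prove Proposition \ref{prop:fa} in two directions, following the proof of Proposition \ref{prop:slr}: a feasible solution of either problem yields one of the other with the same objective value.

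\textbf{Problem \eqref{eq:fa_sdprelax_compact} is a relaxation of \eqref{eq:fa_shorrelax}.} Fix a feasible solution of \eqref{eq:fa_shorrelax}. Index $\mathrm{vec}(\bm{X}^\top)$ by $\ell(i,j)=(i-1)n+j$, and set
\[
\bm{S}^i:=\bm{W}_{x,x}^{(i,i)},\qquad t_i:=(\bm{W}_{d,d})_{ii},\qquad v_i:=(\bm{W}_{x,d})_{\ell(i,i),i}.
\]
\begin{itemize}
\item The coupling LMI and $\bm{S}^i\succeq \bm{X}_{i,.}\bm{X}_{i,.}^\top$ follow exactly as in the proof of Theorem \ref{thm:shorlowrankequiv}: sum the diagonal principal blocks, and take the principal submatrix indexed by $(1,\bm{X}_{i,.})$.
\item The $3\times 3$ LMI is the principal submatrix of the lifted matrix indexed by $(1,X_{ii},d_i)$.
\end{itemize}
Since $\langle \bm{P},\bm{W}_{x,d}\rangle=\sum_i (\bm{W}_{x,d})_{\ell(i,i),i}=\bm{e}^\top\bm{v}$, $\mathrm{tr}(\bm{W}_{x,x})=\sum_i\mathrm{tr}(\bm{S}^i)$ and $\mathrm{tr}(\bm{W}_{d,d})=\bm{e}^\top\bm{t}$, the objective values coincide.

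\textbf{Problem \eqref{eq:fa_shorrelax} is a relaxation of \eqref{eq:fa_sdprelax_compact}.} Fix a feasible solution of \eqref{eq:fa_sdprelax_compact}.
\begin{enumerate}
\item Build $\bm{W}_{x,x}$ as in Theorem \ref{prop:common-core-penalty}: set $\bm{W}_{x,x}^{(i,i)}=\bm{S}^i$ and $\bm{W}_{x,x}^{(i,k)}=\bm{X}_{i,.}\bm{X}_{k,.}^\top$ for $i\neq k$. This gives $\bm{W}_{x,x}\succeq\mathrm{vec}(\bm{X}^\top)\mathrm{vec}(\bm{X}^\top)^\top$.
\item Since $\bm{Y}$ is absent from the objective, replace it by $\widetilde{\bm{Y}}=\bm{X}\bm{\Theta}^\dagger\bm{X}^\top$, where $\bm{\Theta}=\sum_i\bm{S}^i$. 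This stays in $\mathrm{Conv}(\mathcal{Y}^k_n)$ because $\widetilde{\bm{Y}}\preceq\bm{Y}$ by the Schur complement.
\item Use the construction of Theorem \ref{thm:shorlowrankequiv} with $\bm{U}=\bm{\Theta}^\dagger\bm{X}^\top$ to complete $\bm{W}_{x,y},\bm{W}_{y,y}$. This yields a PSD moment matrix $\bm{M}_{x,y}$ on $(1,\mathrm{vec}(\bm{X}^\top),\mathrm{vec}(\widetilde{\bm{Y}}))$ satisfying both trace-block equalities.
\end{enumerate}

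It remains to append the $\bm{d}$-rows. Write $\bm{M}_{x,y}$ as the Gram matrix of vectors $g_0,\{g_{\ell(i,j)}\},\{h_q\}$. For each $i$, let $r_i:=g_{\ell(i,i)}-X_{ii}g_0$, so that $r_i\perp g_0$ and $\|r_i\|^2=S^i_{ii}-X_{ii}^2$. Define
\[
q_i:=d_i g_0+\alpha_i r_i+\beta_i e_i,\qquad \alpha_i=\frac{v_i-X_{ii}d_i}{\|r_i\|^2},\qquad \beta_i=\sqrt{t_i-d_i^2-\alpha_i^2\|r_i\|^2},
\]
where the $e_i$ are fresh orthonormal directions orthogonal to everything else. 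If $\|r_i\|=0$, take $\alpha_i=0$ instead; the $3\times3$ LMI then forces $v_i=X_{ii}d_i$.

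The $3\times3$ LMI is exactly what makes $\beta_i$ real: its Schur complement with respect to the leading $1$ gives $t_i-d_i^2\ge (v_i-X_{ii}d_i)^2/(S^i_{ii}-X_{ii}^2)$. By construction $\langle g_0,q_i\rangle=d_i$, $\langle g_{\ell(i,i)},q_i\rangle=v_i$ and $\|q_i\|^2=t_i$.

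Take the full lifted matrix to be the Gram matrix of all these vectors, which is PSD. The blocks $\bm{W}_{x,d}$, $\bm{W}_{y,d}$ and the off-diagonal entries of $\bm{W}_{d,d}$ are unconstrained, so their induced Gram values are admissible. The objective depends only on $\mathrm{diag}(\bm{W}_{d,d})=\bm{t}$ and $\langle\bm{P},\bm{W}_{x,d}\rangle=\bm{e}^\top\bm{v}$, so it is reproduced exactly.

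\textbf{Main obstacle.} The key step is this Gram-vector extension. It must add the $d$-coordinates while leaving intact the already-built $(x,y)$ moment matrix, including the trace constraints from Theorem \ref{thm:shorlowrankequiv}. Orthogonal fresh directions achieve this, since all diagonal-matching requirements involve each $q_i$ individually and never pairs $q_i,q_k$. The degenerate case $\|r_i\|=0$ has to be handled separately, as above.
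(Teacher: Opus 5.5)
Your proposal is correct and follows the paper's own proof essentially step for step. One direction reads off $\bm{S}^i$, $t_i$, $v_i$ from principal submatrices; the other uses the block construction of $\bm{W}_{x,x}$, the completion from Theorem~\ref{thm:shorlowrankequiv} with $\bm{U}=\bm{\Theta}^\dagger\bm{X}^\top$, and the same Gram-vector extension $q_i=d_i g_0+\alpha_i r_i+\beta_i e_i$, with the degenerate case $\|r_i\|=0$ handled in the same way. Your explicit justification for replacing $\bm{Y}$ by $\widetilde{\bm{Y}}=\bm{X}\bm{\Theta}^\dagger\bm{X}^\top$ is a small but welcome addition, since the paper's proof uses $\widetilde{\bm{Y}}$ without defining it there.
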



\begin{remark}\textbf{Comparison with \cite{bertsimas2017certifiably}:} Relaxation \eqref{eq:fa_sdprelax_compact} is a strong \emph{one-shot} semidefinite lower bound for the least-squares objective in \eqref{eq:fa}, without requiring additional regularization or bounding-box refinements, and which also applies when $\bm{A}$ is indefinite (although we lose the statistical interpretation in this case). By contrast, \cite{bertsimas2017certifiably} develop semidefinite bounds tailored to classical factor analysis
structure through a spectral reformulation that strongly leverages an additional constraint $\bm{A}-\bm{D}\succeq \bm{0}$, and their approach does not apply without this constraint. Thus, the approach derived here is more general, although it may be weaker after the McCormick branch-and-bound procedure in \cite{bertsimas2017certifiably} is applied. 
\end{remark}

\proof{Proof of Proposition \ref{prop:fa}}
We show that for any feasible solution to \eqref{eq:fa_shorrelax}, there exists a feasible solution to
\eqref{eq:fa_sdprelax_compact} with the same objective value and vice versa.

To prove the result, we require additional notation. Since $\mathrm{vec}(\bm{X}^\top)$ stacks the rows of $\bm{X}$,
we index its entries by $\ell(i,j):=(i-1)n+j,\ (i,j)\in[n]\times[n],$ so that $\mathrm{vec}(\bm{X}^\top)_{\ell(i,j)}=X_{ij}$.

\textbf{Problem \eqref{eq:fa_sdprelax_compact} is a relaxation of Problem \eqref{eq:fa_shorrelax}:}
Fix a solution to \eqref{eq:fa_shorrelax} and define
\[
\bm{S}^i := \bm{W}_{x,x}^{(i,i)}\in\mathcal{S}_+^n,\quad i\in[n],\qquad
t_i := (\bm{W}_{d,d})_{ii},\qquad
v_i := (\bm{W}_{x,d})_{\ell(i,i),\,i},\quad i\in[n].
\]
We claim that $(\bm{X},\bm{d},\bm{Y},\{\bm{S}^i\}_{i\in[n]},\bm{t},\bm{v})$ is feasible for \eqref{eq:fa_sdprelax_compact}
and attains the same objective value. First, the feasibility with respect to the constraints
\[
\begin{pmatrix}
\sum_{i\in[n]} \bm{S}^i & \bm{X}^\top\\
\bm{X} & \bm{Y}
\end{pmatrix}\succeq \bm{0},
\qquad
\bm{S}^{i}=\bm{W}_{x,x}^{(i,i)}\succeq \bm{X}_{i,.}\bm{X}_{i,.}^\top,
\]
follows identically to the proof of Theorem~\ref{thm:shorlowrankequiv}.

Second, for any $i\in[n]$, consider the principal submatrix of the lifted PSD constraint in \eqref{eq:fa_shorrelax}
indexed by $(1,\mathrm{vec}(\bm{X}^\top)_{\ell(i,i)}, d_i)$, i.e., $(1,X_{ii},d_i)$. This yields
\[
\begin{pmatrix}
1 & X_{ii} & d_i\\
X_{ii} & (\bm{W}_{x,x})_{\ell(i,i),\ell(i,i)} & (\bm{W}_{x,d})_{\ell(i,i),i}\\
d_i & (\bm{W}_{x,d})_{\ell(i,i),i} & (\bm{W}_{d,d})_{ii}
\end{pmatrix}\succeq \bm{0}.
\]
Using $(\bm{W}_{x,x})_{\ell(i,i),\ell(i,i)}=(\bm{S}^i)_{ii}$ and the definitions of $t_i$ and $v_i$ gives
\[
\begin{pmatrix}
1 & X_{ii} & d_i\\
X_{ii} & S^i_{ii} & v_i\\
d_i & v_i & t_i
\end{pmatrix}\succeq \bm{0},
\]
which is exactly the second set of LMIs in \eqref{eq:fa_sdprelax_compact}.

Finally, the objective values agree because $\mathrm{tr}(\bm{W}_{x,x})=\sum_{i\in[n]}\mathrm{tr}(\bm{W}_{x,x}^{(i,i)})=\sum_{i\in[n]}\mathrm{tr}(\bm{S}^i)$,
$\mathrm{tr}(\bm{W}_{d,d})=\sum_{i\in[n]} t_i$, and
$\langle \bm{P},\bm{W}_{x,d}\rangle=\sum_{i\in[n]} (\bm{W}_{x,d})_{\ell(i,i),i}=\bm{e}^\top \bm{v}$
by the definition of $\bm{P}$.

\textbf{Problem \eqref{eq:fa_shorrelax} is a relaxation of Problem \eqref{eq:fa_sdprelax_compact}:}
Fix a feasible solution $(\bm{X},\bm{d},\bm{Y},\{\bm{S}^i\}_{i\in[n]},\bm{t},\bm{v})$ to \eqref{eq:fa_sdprelax_compact}, with a view to construct a feasible solution to \eqref{eq:fa_shorrelax}.

First, define $\bm{W}_{x,x}\in\mathcal{S}^{n^2}_+$ blockwise via $\bm{W}_{x,x}^{(i,i)} := \bm{S}^i,\
\bm{W}_{x,x}^{(i,k)} := \bm{X}_{i,}\bm{X}_{k,}^\top\ \ (i\neq k).$ Then $\bm{W}_{x,x}\succeq \bm{0}$ by construction and
$\mathrm{tr}(\bm{W}_{x,x})=\sum_{i\in[n]}\mathrm{tr}(\bm{S}^i)$.

Second, let $\bm{\Theta}:=\sum_{i\in[n]}\bm{S}^i$, $\bm{U}:=\bm{\Theta}^\dagger \bm{X}^\top\in\mathbb{R}^{n\times n}$ and set the diagonal blocks
$\bm{W}_{x,y}^{(i,i)} := \bm{S}^i \bm{U}$ and $\bm{W}_{y,y}^{(i,i)} := \bm{U}^\top \bm{S}^i \bm{U}$.
Then, identically to the proof of Theorem~\ref{thm:shorlowrankequiv}, we can complete the remaining (off-diagonal) blocks of
$\bm{W}_{x,y}$ and $\bm{W}_{y,y}$ so that
\[
\begin{pmatrix}
1 & \mathrm{vec}(\bm{X}^\top)^\top & \mathrm{vec}(\widetilde{\bm{Y}}^\top)^\top\\
\mathrm{vec}(\bm{X}^\top) & \bm{W}_{x,x} & \bm{W}_{x,y}\\
\mathrm{vec}(\widetilde{\bm{Y}}^\top) & \bm{W}_{x,y}^\top & \bm{W}_{y,y}
\end{pmatrix}\succeq \bm{0}.
\]

Third, let $\bm{M}_{x,y}$ denote the PSD matrix in the display above, and let
$\bm{g}_0,\{\bm{g}_{p}\}_{p=1}^{n^2},\{\bm{h}_{q}\}_{q=1}^{n^2}$ be vectors whose Gram matrix equals $\bm{M}_{x,y}$
(e.g., take a Cholesky factor of $\bm{M}_{x,y}$). In particular,
$\|\bm{g}_0\|^2=1$, $\bm{g}_0^\top \bm{g}_{\ell(i,i)}=X_{ii}$, and
$\|\bm{g}_{\ell(i,i)}\|^2=(\bm{W}_{x,x})_{\ell(i,i),\ell(i,i)}=S^i_{ii}$. Next, fix $i\in[n]$ and write
\[
\bm{g}_{\ell(i,i)} = X_{ii}\bm{g}_0 + \bm{r}_i,
\qquad \text{where } \bm{r}_i \perp \bm{g}_0,\ \ \|\bm{r}_i\|^2 = S^i_{ii}-X_{ii}^2.
\]
Introduce a new unit vector $\bm{e}_i$ orthogonal to $\mathrm{span}\{\bm{g}_0,\bm{g}_p,\bm{h}_q:\ p,q\in[n^2]\}$,
and define
\[
\alpha_i :=
\begin{cases}
\dfrac{v_i - X_{ii} d_i}{\|\bm{r}_i\|^2}, & \text{if } \|\bm{r}_i\|^2>0,\\[0.6em]
0, & \text{if } \|\bm{r}_i\|^2=0,
\end{cases}
\qquad
\beta_i := \sqrt{t_i - d_i^2 - \alpha_i^2\|\bm{r}_i\|^2}.
\]
The quantity under the square root is nonnegative because the constraint
$\begin{pmatrix}1&X_{ii}&d_i\\X_{ii}&S^i_{ii}&v_i\\d_i&v_i&t_i\end{pmatrix}\succeq \bm{0}$
implies, by Schur complement, that
$\begin{pmatrix}\|\bm{r}_i\|^2 & v_i - X_{ii}d_i\\ v_i - X_{ii}d_i & t_i - d_i^2\end{pmatrix}\succeq \bm{0}$. Now, set $ \bm{q}_i := d_i \bm{g}_0 + \alpha_i \bm{r}_i + \beta_i \bm{e}_i.$
Then, $\bm{g}_0^\top \bm{q}_i=d_i$, $\|\bm{q}_i\|^2=t_i$, and
\[
\bm{g}_{\ell(i,i)}^\top \bm{q}_i
=
X_{ii}d_i + \bm{r}_i^\top (\alpha_i \bm{r}_i)
=
X_{ii}d_i + \alpha_i \|\bm{r}_i\|^2
=
v_i.
\]
Define the remaining lifted blocks by
\[
(\bm{W}_{d,d})_{ij} := \bm{q}_i^\top \bm{q}_j,\qquad
(\bm{W}_{x,d})_{p,i} := \bm{g}_{p}^\top \bm{q}_i,\qquad
(\bm{W}_{y,d})_{q,i} := \bm{h}_{q}^\top \bm{q}_i.
\]
By construction, the full lifted matrix in \eqref{eq:fa_shorrelax} is the Gram matrix of
$\{\bm{g}_0,\bm{g}_p,\bm{h}_q,\bm{q}_i\}$, hence is PSD. Moreover,
$\mathrm{tr}(\bm{W}_{d,d})=\sum_{i\in[n]}\|\bm{q}_i\|^2=\bm{e}^\top \bm{t}$ and
$\langle \bm{P},\bm{W}_{x,d}\rangle=\sum_{i\in[n]} (\bm{W}_{x,d})_{\ell(i,i),i}=\sum_{i\in[n]} \bm{g}_{\ell(i,i)}^\top \bm{q}_i=\bm{e}^\top \bm{v}$.
Therefore, the constructed solution is feasible for \eqref{eq:fa_shorrelax} and has the same objective value as
\eqref{eq:fa_sdprelax_compact}. \Halmos \endproof
\newpage

\begin{bibunit}

\section{Additional Low-Rank Matrix Completion Results Supporting Section \ref{sec:numerics}}
Figure \ref{fig:gw_vary_gamma} compares the quality of different relaxations for low-rank matrix completion by returning the optimality gap achieved, defined as the relative difference between the lower bound (obtained by each relaxation) and our upper bound (obtained by alternating minimization, AM). Figures \ref{fig:gw_vary_gamma_absoluteLBs} and \ref{fig:gw_vary_gamma_absoluteUBs_BM} report the lower and upper bounds separately. Moreover, Figure \ref{fig:gw_vary_gamma_runtimes} compares the same three relaxations in terms of computational time.

Figure \ref{fig:gw_vary_n_2} reports similar results to Figure \ref{fig:gw_vary_n}, except with $\rho=0.2$ rather than $\rho=0.3$. 

\FloatBarrier
\begin{figure}
    \centering
\begin{subfigure}{0.45\textwidth}
        \includegraphics[width=\textwidth]{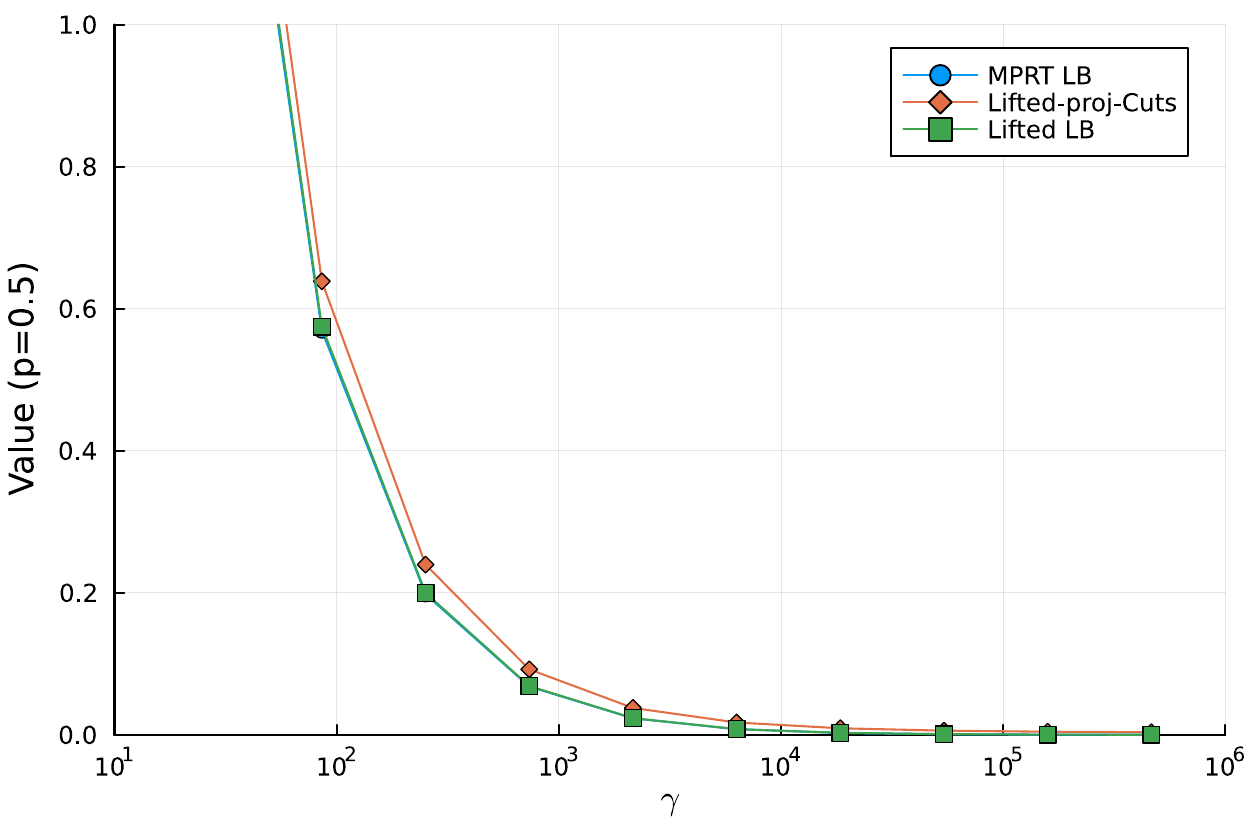}
        \caption{$p=0.5$}
        \label{fig:bounds_0p5_3}
    \end{subfigure}
        \begin{subfigure}{0.45\textwidth}
        \includegraphics[width=\textwidth]{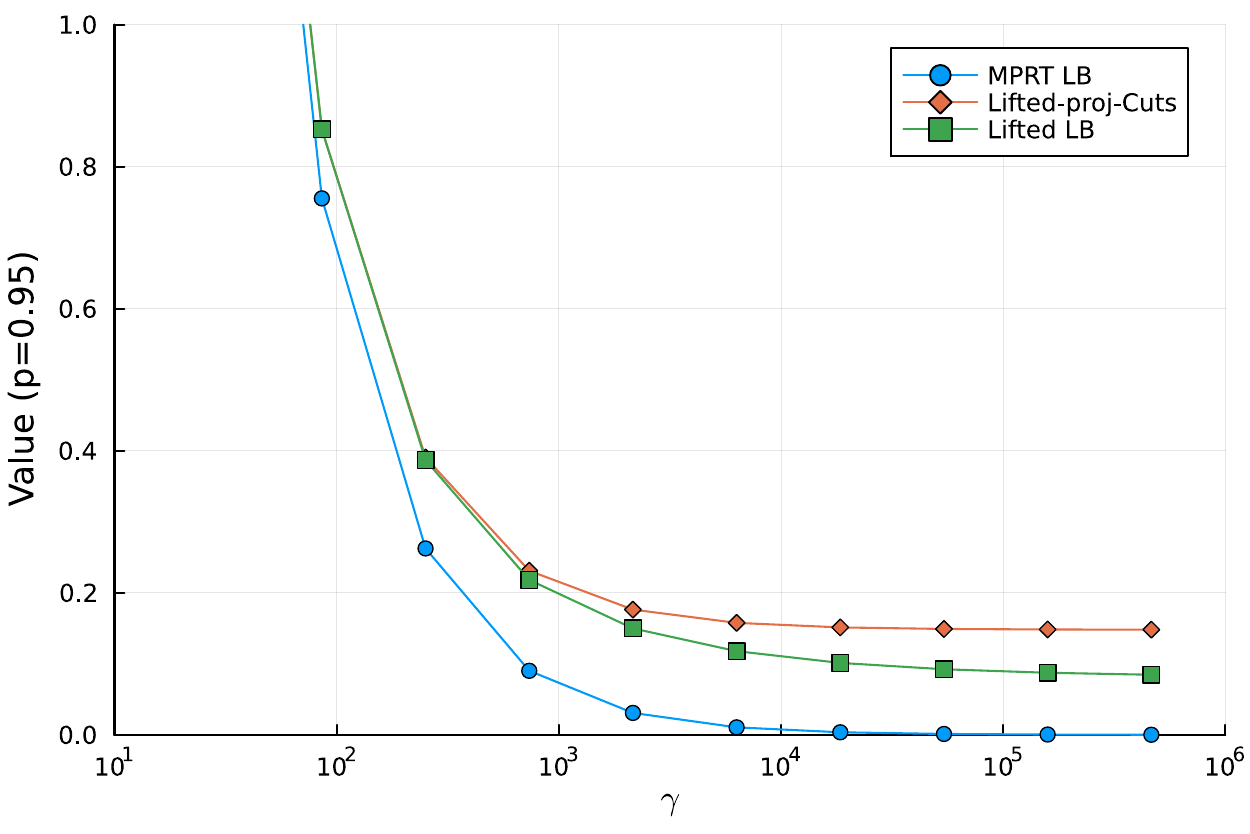}
        \caption{$p=0.95$ }
        \label{fig:bounds_0p95_3}
    \end{subfigure}
    \caption{\color{black}Absolute lower bounds as we vary $\gamma$ for (i) a matrix perspective relaxation (``MPRT''), (ii) our lifted relaxation (``Lifted''), (iii) our lifted relaxation with projection cuts (``Lifted-proj-cuts''), for $p =0.5$ in panel (a) and $p= 0.95$ in panel (b) for $n=8$.}
    \label{fig:gw_vary_gamma_absoluteLBs}
\end{figure}

\begin{figure}
    \centering
\begin{subfigure}{0.45\textwidth}
        \includegraphics[width=\textwidth]{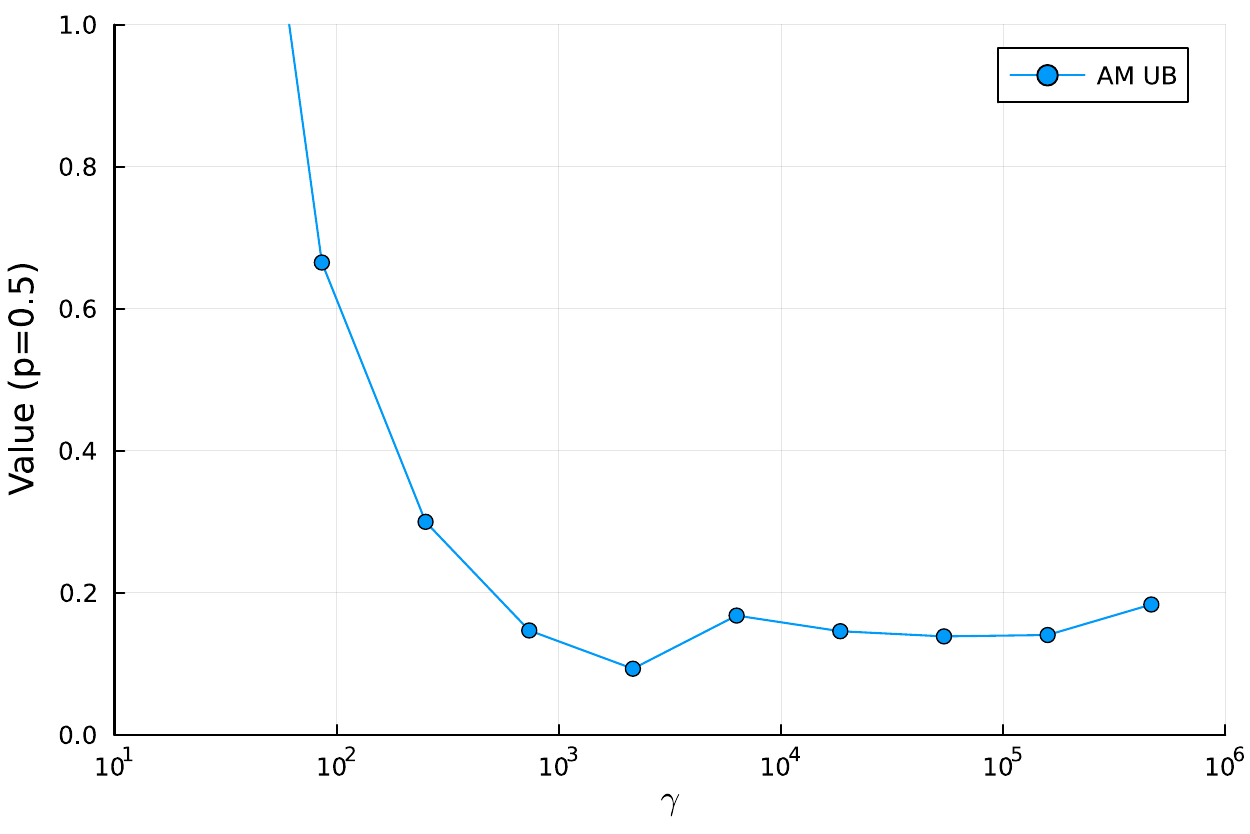}
        \caption{$p=0.5$}
        \label{fig:bounds_0p5_ubs}
    \end{subfigure}
        \begin{subfigure}{0.45\textwidth}
        \includegraphics[width=\textwidth]{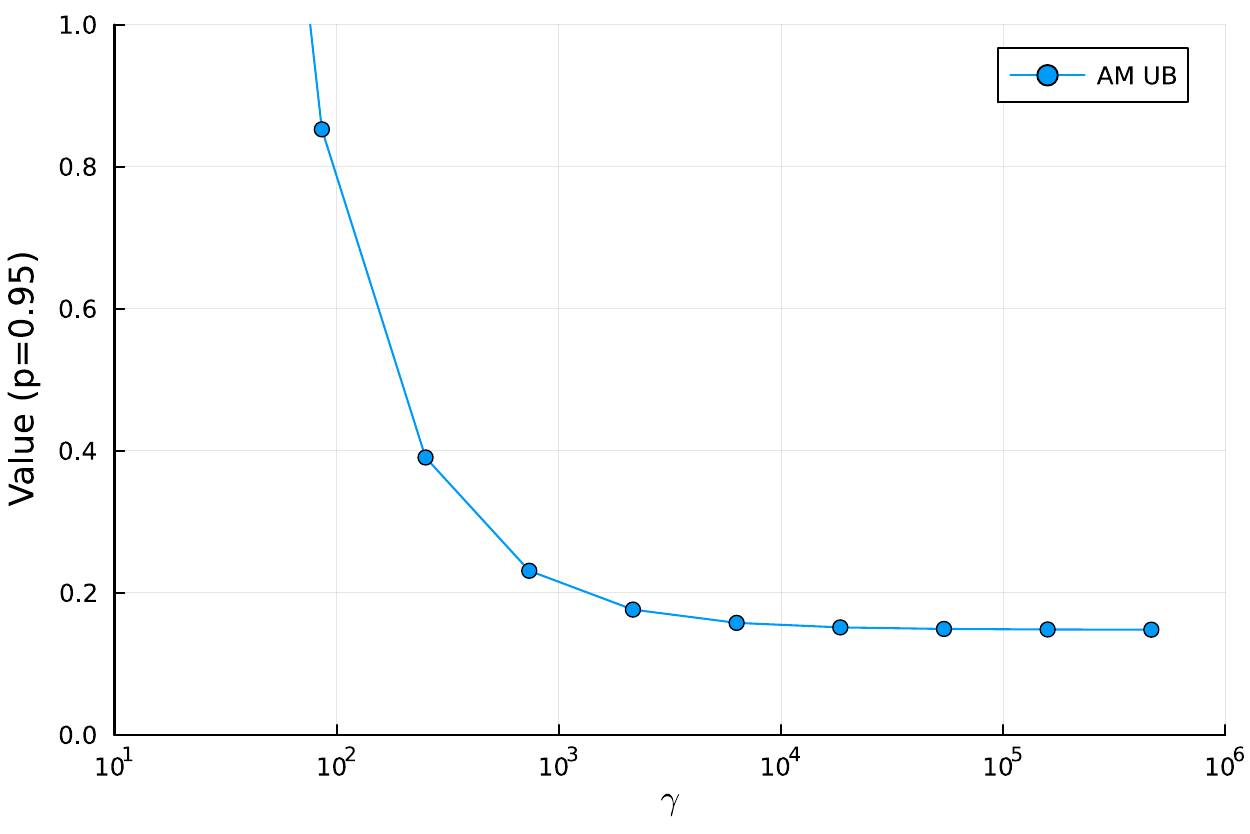}
        \caption{$p=0.95$}
        \label{fig:bounds_0p95_ubs}
    \end{subfigure}
    \caption{Absolute upper bounds as we vary $\gamma$ for alternating minimization initialized at a rank-$k$ SVD of $\mathcal{P}(\bm{A})$ for $p \in \{0.5, 0.95\}$ and $n=8$. }
    \label{fig:gw_vary_gamma_absoluteUBs_BM}
\end{figure}
\FloatBarrier

\begin{figure}
    \centering
\begin{subfigure}{0.45\textwidth}
        \includegraphics[width=\textwidth]{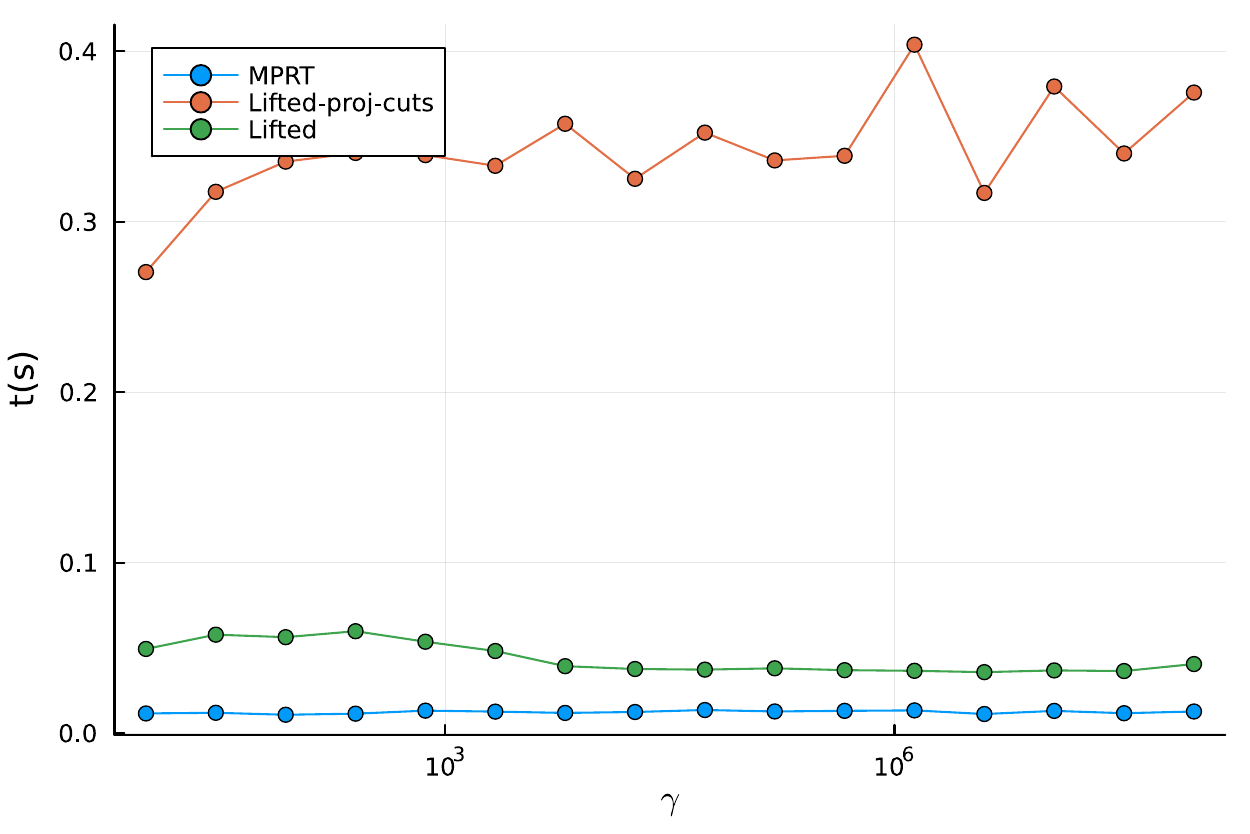}
        \caption{$p=0.5$}
        \label{fig:bounds_0p5_runtimes}
    \end{subfigure}
        \begin{subfigure}{0.45\textwidth}
        \includegraphics[width=\textwidth]{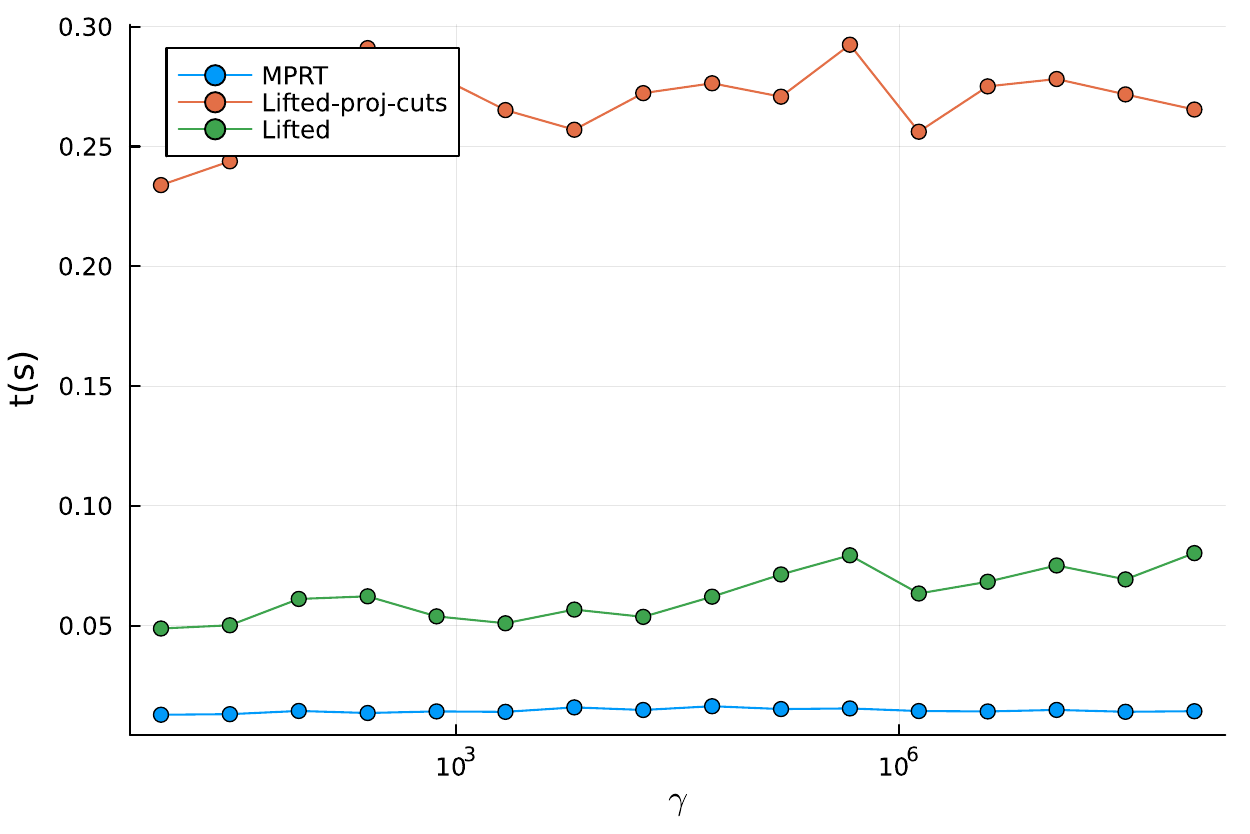}
        \caption{$p=0.95$ }
        \label{fig:bounds_0p95_runtimes}
    \end{subfigure}
    \caption{\color{black}Runtimes as we vary $\gamma$ for (i) a matrix perspective relaxation (``MPRT''), (ii) our lifted relaxation (``Lifted''), (iii) our lifted relaxation with projection cuts (``Lifted-proj-cuts''), for $p =0.5$ in panel (a) and $p= 0.95$ in panel (b) for $n=8$.}
    \label{fig:gw_vary_gamma_runtimes}
\end{figure}

\begin{figure}[h!]
    \centering
\begin{subfigure}{0.45\textwidth}
        \includegraphics[width=\textwidth]{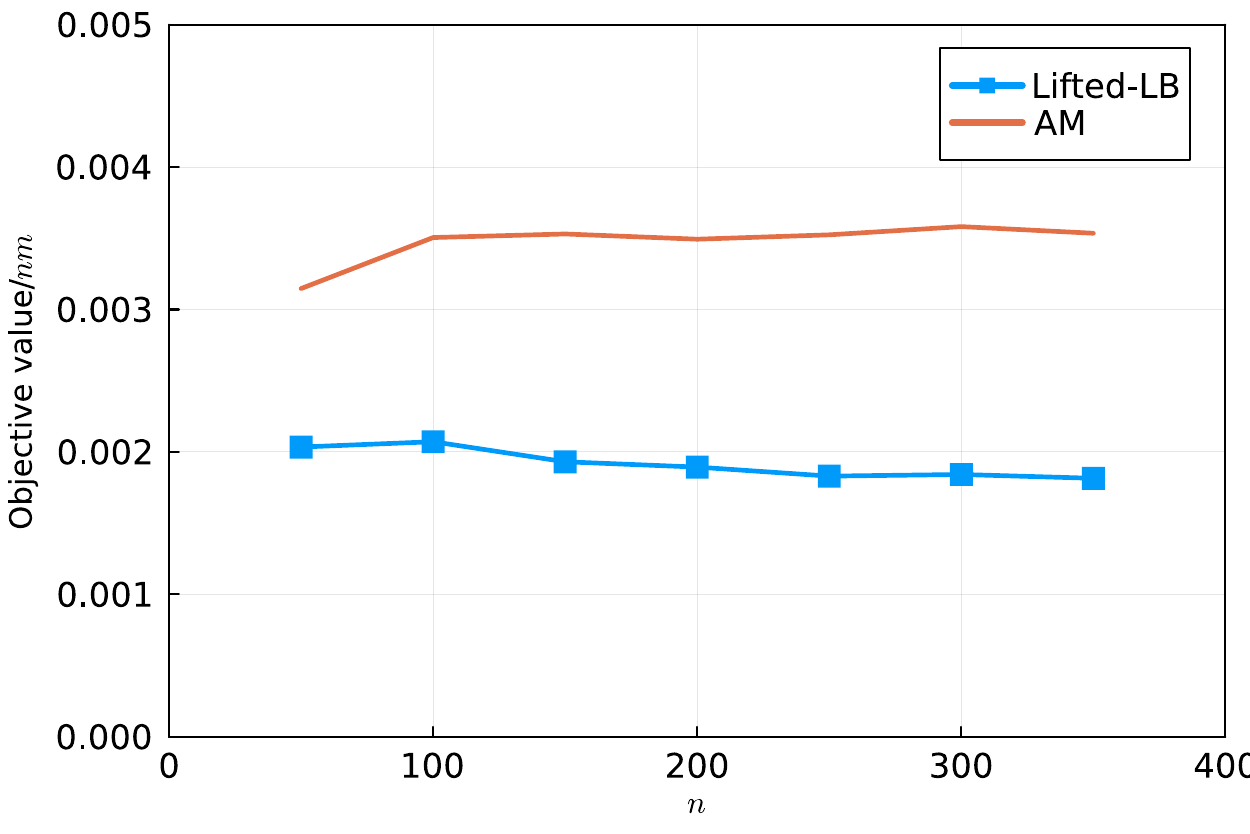}
        \caption{Objective values}
        \label{fig:bounds_0p5_objvals}
    \end{subfigure}
        \begin{subfigure}{0.45\textwidth}
        \includegraphics[width=\textwidth]{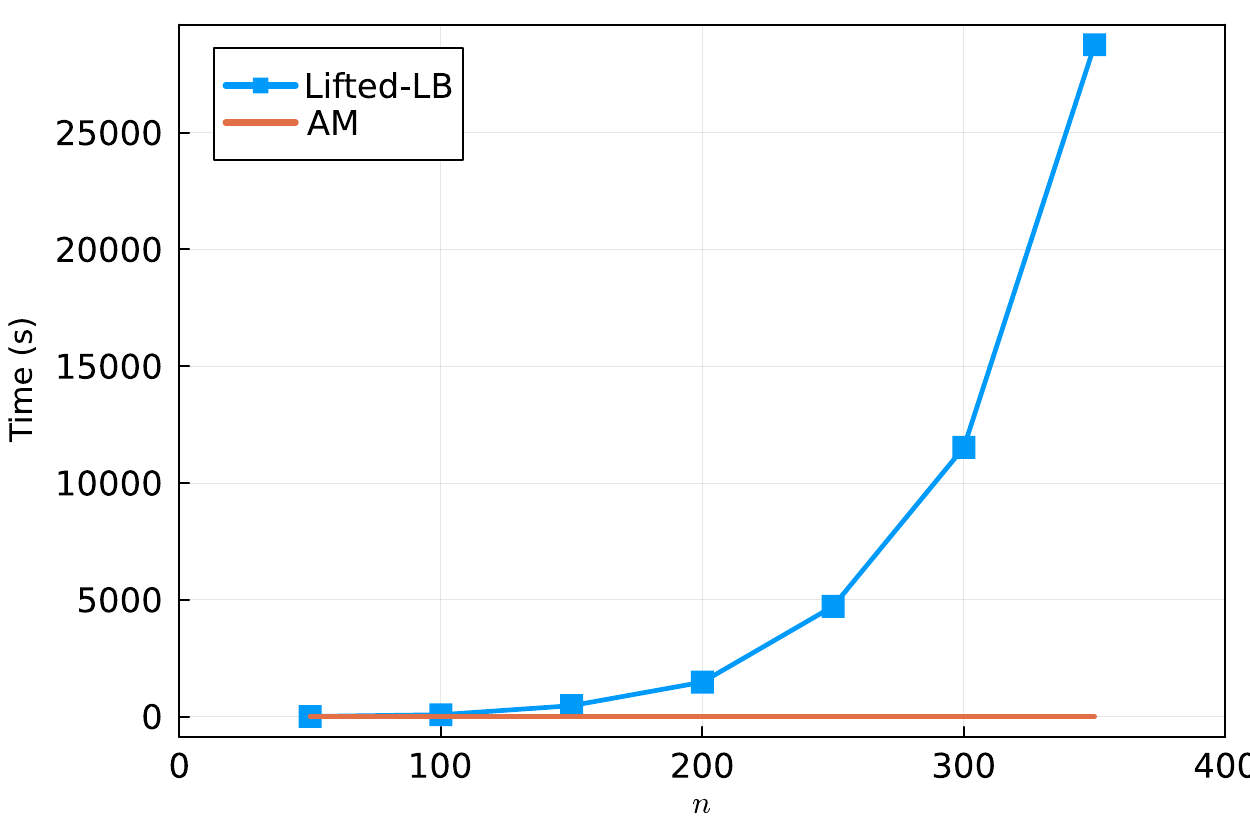}
        \caption{Runtimes}
        \label{fig:bounds_0p95_objvals}
    \end{subfigure}
    \caption{\color{black}Objective value (left panel) and runtime for the compact lifted relaxation (right panel) as we vary $n$ with $m=50$ for our compact lifted relaxation \eqref{eqn:lmrc_further_reduced} with $p=0.4, \rho=0.2$. Results are averaged over 10 replications.}
    \label{fig:gw_vary_n_2}
\end{figure}


\end{bibunit}
\end{document}